\documentclass{amsart}

\usepackage{amsmath, amssymb, amsthm}

\newtheorem{theorem}{Theorem}[section]
\newtheorem{lemma}[theorem]{Lemma}

\newtheorem{corollary}[theorem]{Corollary}
\newtheorem{claim}[theorem]{Claim}

\newtheorem{fact}[theorem]{Fact}
\newtheorem{question}[theorem]{Question}

\theoremstyle{definition}
\newtheorem{definition}[theorem]{Definition}
\newtheorem{remark}[theorem]{Remark}

\newcommand{\cf}{\mathrm{cf}}
\newcommand{\dom}{\mathrm{dom}}
\newcommand{\bb}{\mathbb}

\newcommand{\power}{\mathcal{P}}
\newcommand{\la}{\langle}
\newcommand{\ra}{\rangle}

\newcommand{\po}{\mathbb{P}}
\renewcommand{\P}{\mathbb{P}}

\newcommand{\cof}{\mathrm{cof}}

\DeclareMathOperator{\spc}{sc}
\DeclareMathOperator{\otp}{otp}

\synctex=1

\begin{document}
\title{Diagonal supercompact Radin forcing}
\author{Omer Ben-Neria}
\address{Einstein Institute of Mathematics, Hebrew University of Jerusalem}
\email{omer.bn@mail.huji.ac.il}
\author{Chris Lambie-Hanson}
\address{Department of Mathematics and Applied Mathematics \\
Virginia Commonwealth University \\
Richmond, VA 23284 \\ United States}
\email{cblambiehanso@vcu.edu}
\urladdr{people.vcu.edu/~cblambiehanso}
\author{Spencer Unger}
\address{Einstein Institute of Mathematics, Hebrew University of Jerusalem}
\email{unger.spencer@mail.huji.ac.il}
\thanks{This research was conducted while the second author was a Lady Davis Postdoctoral Fellow. The
author would like to thank the Lady Davis Fellowship Trust and the Hebrew University of Jerusalem.
The third author was partially supported by NSF DMS-1700425.
We would also like to thank the anonymous referee, who
made a number of helpful corrections and suggestions that substantially improved
the paper.
}
\date{\today}

\subjclass[2010]{03E35, 03E55, 03E04, 03E05}
\keywords{Radin forcing, supercompact cardinals, weak square}

\begin{abstract}
	Motivated by the goal of constructing a model in which there are no
	$\kappa$-Aronszajn trees for any regular $\kappa>\aleph_1$, we produce a
	model with many singular cardinals where both the singular cardinals hypothesis
	and weak square fail.
\end{abstract}

\maketitle

\section{Introduction}

In this paper, we produce a model of ZFC with some global
behavior of the continuum function on singular cardinals and the failure of weak
square. Our method is as an extension of Sinapova's work
\cite{sinapovauncountable1}.  We define a diagonal supercompact Radin forcing
which adds a club subset to a cardinal $\kappa$ while forcing the failure of
the Singular Cardinals Hypothesis (SCH)
everywhere on the club and preserving the inaccessibility of $\kappa$. In the
forcing extension, weak square will necessarily hold at some successors of
singular cardinals below $\kappa$, but the set of these singular cardinals will
be sufficiently sparse that it can be made non-stationary by $\kappa$-distributive
forcing. We will thus obtain the following result.

\begin{theorem}\label{thm1} If there are a supercompact cardinal $\kappa$ and a
weakly inaccessible cardinal $\theta > \kappa$, then there is a forcing extension in
which $\kappa$ is inaccessible and there is a club $E \subseteq \kappa$ of
singular cardinals $\nu$ at which SCH and $\square^*_\nu$ both fail.
\end{theorem}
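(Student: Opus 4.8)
The plan is to realise the program sketched in the introduction in four stages: a preparatory forcing; the construction of a ``diagonal supercompact measure sequence'' $\vec u$; the main Radin-style iteration $\ro = \ro_{\vec u}$; and a final club-shooting forcing $\qo$ that deletes the singular cardinals at which weak square survives. For Stage~1 I would force with a reverse Easton iteration of length $\kappa$ that makes the supercompactness of $\kappa$ indestructible under $\kappa$-directed closed forcing (a Laver preparation) while keeping the continuum function below $\kappa$ tame and $\mathrm{GCH}$ just above $\kappa$; as this iteration has size $\kappa < \theta$, the cardinal $\theta$ remains weakly inaccessible. Working in the extension, which I again call $V$, and using the now-indestructible supercompactness, I would build by recursion on $\alpha < \theta$ a coherent measure sequence $\vec u = \la u_\alpha : \alpha < \theta \ra$ in which each $u_\alpha$ is derived from a supercompactness measure on $P_\kappa(\lambda_\alpha)$ for a carefully chosen increasing sequence $\la \lambda_\alpha : \alpha < \theta \ra$ of cardinals, coherence meaning that $u_\alpha$ occurs as the appropriate coordinate of $j(\vec u \uhr \alpha)$ for the ultrapower map $j$ by a supercompactness measure on $P_\kappa(\lambda)$ with $\lambda$ above all the $\lambda_\alpha$. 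The ``diagonal'' choice of the targets $\lambda_\alpha$, in the spirit of the targets $P_\kappa(\kappa^{+n})$ used by Gitik--Sharon and Sinapova, is made precisely so that at the relevant generic stages an interval above the successor of the singularized cardinal gets collapsed; that $\kappa$ is genuinely supercompact and that $\theta$ is weakly inaccessible are what make this recursion succeed with length $\theta$.

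Let $\ro = \ro_{\vec u}$ be the diagonal supercompact Radin forcing read off $\vec u$, with its direct extension (pure) order $\le^*$. Following the Radin template --- with the bookkeeping complicated by the diagonal supercompact targets --- one must verify: $\ro$ has the Prikry property with respect to $\le^*$; $(\ro, \le^*)$ is suitably closed and $\ro$ satisfies a chain condition bounding its collapses well below $\theta$; consequently $\ro$ adds no new bounded subsets of $\kappa$ except via mild quotients, the continuum function below $\kappa$ stays tame, and $\kappa$ remains inaccessible in $V[G]$ --- here one uses that $\vec u$ has length $\theta$, so that the generic club $C = C_G$ is unbounded in $\kappa$ and moreover club-many $\nu \in C$ carry, in $V[G]$, a measure sequence long relative to $\nu$. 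A density argument then yields a club $C' \subseteq C$ of singular cardinals $\nu$ such that $\nu$ is a strong limit, the quotient of $\ro$ below a condition forcing $\nu \in C$ collapses the old interval $(\nu^+, \lambda^\nu]$ onto $\nu^+$ (with $\lambda^\nu$ the local diagonal target at $\nu$), and enough subsets of $\nu$ survive that $2^\nu = \nu^{++}$ in $V[G]$; hence $\mathrm{SCH}$ fails at every $\nu \in C'$. Establishing the Prikry property is the most technical part of this stage, but runs along known lines; the cardinal-arithmetic statements then follow with comparatively little effort.

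The core of the argument, and the step I expect to be the main obstacle, is Stage~3: to show that
\[
  S \;=\; \{\, \nu \in C' : \square^*_\nu \text{ holds in } V[G] \,\}
\]
is non-reflecting --- indeed nowhere stationary --- in $V[G]$. (As the introduction notes, $\square^*_\nu$ cannot be expected to fail at \emph{every} relevant $\nu$, which is precisely why the further forcing $\qo$ is needed.) I would show that for every $\nu \in C'$ which is a limit point of limit points of $C$ --- equivalently, whose attached $V[G]$-measure sequence is long relative to $\nu$ --- the supercompactness encoded in that measure sequence yields, by a Radin-localisation of Sinapova's argument, a bad scale of length $\nu^+$, so that the approachability ideal $I[\nu^+]$ is nontrivial on a stationary set and $\square^*_\nu$ fails in $V[G]$. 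This scale-and-approachability analysis, carried out at each $\nu^+$ from the corresponding supercompactness embedding, is the delicate point; it is here, together with the recursion defining $\vec u$, that the full strength of the hypotheses is used. Granting it, the remaining points of $C'$ are disjoint from the club of limit-of-limit points of $C$, so $S$ is disjoint from a club of $\kappa$ and, by the analogous observation inside each regular $\alpha < \kappa$, is nowhere stationary.

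Finally, in $V[G]$ let $\qo$ be the standard forcing shooting a closed unbounded $D \subseteq \kappa$ with $D \cap S = \emptyset$. Since $S$ is non-reflecting, $\qo$ is $\kappa$-distributive, hence adds no bounded subsets of $\kappa$: it preserves the inaccessibility of $\kappa$, preserves all cardinal arithmetic below $\kappa$ (so $\neg\mathrm{SCH}$ still holds on $C'$), and adds no new subsets of any $\nu^+ < \kappa$ (so $\neg\square^*_\nu$ is preserved for $\nu \in C' \setminus S$). Then, passing to the extension by $\qo$, the set $E := D \cap C'$ is a club in $\kappa$ consisting of singular cardinals at which both $\mathrm{SCH}$ and $\square^*_\nu$ fail, completing the proof.
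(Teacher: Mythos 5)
Your Stages 1, 2, and the Prikry-property/chain-condition/SCH analysis of the main forcing all run along the same lines as the paper, and the final club-shooting step is the right move. The genuine gap is in Stage 3: your characterization of where $\square^*_\nu$ fails is exactly backwards, and as a result your claim that $S$ is ``disjoint from a club of $\kappa$'' (indeed nowhere stationary) is false in the model. You assert that at points $\nu \in C$ whose attached measure sequence is long relative to $\nu$ (the deep limit points, indexed by ordinals $\beta$ with $\cf^V(\beta) \geq \kappa$), a bad scale gives the failure of approachability and hence of $\square^*_\nu$. In fact the bad-scale/approachability argument only survives at the points indexed by $\beta$ with $\cf^V(\beta) < \kappa$, where the cofinality of $o(y)$ is preserved into the extension; at the points with $\cf^V(\beta) \geq \kappa$, every cardinal in the interval $[\nu, \nu^{+o(y)}]$ changes its cofinality to $\omega$, and then partial square sequences on $\nu^{+o(y)+1} \cap \cof(\leq \lambda)$ (via D\v{z}amonja--Shelah, using $(\nu^{+o(y)})^{\leq\lambda} = \nu^{+o(y)}$ from supercompactness) can be assembled into a full $\square^*_\nu$-sequence in $V[G]$. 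So $\square^*_\nu$ provably \emph{holds} at exactly the points you claim it fails, and the set $S$ where it holds is stationary in $\kappa$ by genericity --- which is also why your argument is internally inconsistent: if $S$ really avoided a club, the final forcing $\qo$ you perform would be unnecessary.

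The missing idea is therefore not that $S$ is small but that it is \emph{non-reflecting}: one must show that for every $\delta \in \lim(C_G)$ of uncountable cofinality, $S \cap \delta$ is non-stationary in $\delta$. This is done by noting that such $\delta = \kappa_y$ comes from an index $\beta$ with $\cf^V(\beta) = \mu < \kappa$, choosing a club $D \subseteq Z^\beta_y$ of order type $\mu$ all of whose limit points have small cofinality, and shrinking the measure-one sets so that the corresponding sub-club of $C_G \cap \delta$ avoids $S$ (by the approachability-failure lemma applied at those small-cofinality indices). Only then is $\kappa \setminus S$ fat, and only then does the Abraham--Shelah theorem give that the club-shooting $\qo$ is $\kappa$-distributive. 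Without this reflection analysis your Stage 4 has no justification, since distributivity of club-shooting through the complement of a \emph{stationary} set is exactly the delicate point.
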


We are motivated by the question of whether in ZFC one can construct a
$\kappa$-Aronszajn tree for some $\kappa>\omega_1$.  The question is also open
if we ask for a \emph{special} $\kappa$-Aronszajn tree. Forcing provides a
possible path to a negative solution by showing that it is consistent with ZFC
that there are no $\kappa$-Aronsajn trees on any regular $\kappa>\omega_1$.  By
a theorem of Jensen \cite{jensen}, $\square_\mu^*$ is equivalent to the
existence of a special $\mu^+$-Aronszajn tree.  So our theorem is partial
progress towards a model with no special Aronszajn trees.

The non-existence of $\kappa$-Aronszajn trees (the tree property at $\kappa$)
and the non-existence of special $\kappa$-Aronszajn trees (failure of
$\square^*$) are reflection principles which are closely connected
with large cardinals.  For example, theorems of Erd\H{o}s and Tarski
\cite{erdostarski}, and Monk and Scott \cite{monkscott}, show that an
inaccessible cardinal is weakly compact if and only if it has the tree property.
Further, Mitchell and Silver \cite{mitchell} showed that the tree property at
$\aleph_2$ is consistent with ZFC if and only if the existence of a weakly
compact cardinal is.

Specker \cite{specker} showed that, if $\kappa^{<\kappa}=\kappa$, then there is a
special $\kappa^+$-Aronszajn tree.  This theorem places an important restriction
on models where there are no special Aronszajn trees.  From Specker's theorem, a
model with no special $\kappa$-Aronszajn trees for any $\kappa>\aleph_1$ must be
one in which GCH fails everywhere.  In particular GCH must fail at every
singular strong limit cardinal, a failure of SCH.
The consistency of the failure of SCH requires large cardinals
\cite{gitiksch}; a model in which GCH fails everywhere was first obtained by
Foreman and Woodin \cite{FW}.

There are many partial results towards constructing a model in which every
regular cardinal greater than $\aleph_1$ has the tree property.  There is a
bottom up approach where one attempts to force longer and longer initial
segments of the regular cardinals to have the tree property; see, for example
\cite{abraham,cf,neemanupto,ungernsl}.  We refer the reader to
\cite{ungerind2} for some analogous results on successive failures of weak
square.  Another aspect of the problem comes from the interaction between
cardinal arithmetic at singular strong limit cardinals $\mu$ and the tree
property at $\mu^+$.  In the 1980's Woodin asked whether the failure of SCH at
$\aleph_{\omega}$ is consistent with the tree property at $\aleph_{\omega+1}$.
More generally, one can consider whether this situation is consistent at some
larger singular cardinal.  An important result in this direction is due to Gitik
and Sharon \cite{gitiksharon}, who showed that, relative to the existence of a
supercompact cardinal, it is consistent that there is a singular cardinal
$\kappa$ of cofinality $\omega$ such that SCH fails at $\kappa$ and there are no
special $\kappa^+$-Aronszajn trees.  In fact they show a stronger assertion
$\left(\kappa^+ \notin I[\kappa^+]\right)$, which we will define later.  In the same paper,
they show that it is possible to make $\kappa$ into $\aleph_{\omega^2}$.
Cummings and Foreman \cite{cfdiag} showed that there is a PCF theoretic object
called a bad scale in the models of Gitik and Sharon, which implies that
$\kappa^+ \notin I[\kappa^+]$.

The key ingredient in Gitik and Sharon's argument was a new diagonal
supercompact Prikry forcing.  The basic idea is to start with supercompactness
measures $U_n$ on $\mathcal{P}_\kappa\left(\kappa^{+n}\right)$ for $n<\omega$ and use them to
define a Prikry forcing. This forcing adds a sequence $\langle x_n \mid n<\omega
\rangle$, where each $x_n$ is a typical point for $U_n$ and
$\bigcup_{n<\omega}x_n = \kappa^{+\omega}$.  The result is that
$\kappa^{+\omega}$ is collapsed to have size $\kappa$ and
$\kappa^{+\omega+1}$ becomes the new successor of $\kappa$.  The fact that
$\kappa^{+\omega+1} \notin I[\kappa^{+\omega+1}]$ in the ground model persists
to provide $\kappa^+ \notin I[\kappa^+]$ in the extension.  Moreover, if we start
with $2^\kappa = \kappa^{+\omega+2}$ in the ground model, then we get the
failure of SCH at $\kappa$ in the extension.

Variations of Gitik and Sharon's poset have been used to construct many related
models.  We list a few such results:

\begin{enumerate}
\item (Neeman \cite{neemantpsch}) From $\omega$-many supercompact cardinals, there is
a forcing extension in which there is a singular cardinal $\kappa$ of cofinality
$\omega$ such that SCH fails at $\kappa$ and $\kappa^+$ has the tree property.

\item (Sinapova \cite{sinapovauncountable1}) From a supercompact cardinal
$\kappa$, for any regular $\lambda<\kappa$, there is a forcing extension in
which $\kappa$ is a singular cardinal of cofinality $\lambda$, SCH fails at
$\kappa$ and $\kappa$ carries a bad scale (in particular $\kappa^+ \notin
I[\kappa^+]$ and there are no special $\kappa^+$-Aronszajn trees).

\item (Sinapova \cite{sinapovauncountable2}) From $\lambda$-many supercompact
cardinals $\langle \kappa_\alpha \mid \alpha < \lambda \rangle$ with $\lambda <
\kappa_0$ regular, there is a forcing extension in
which $\kappa_0$ is a singular cardinal of cofinality $\lambda$, SCH fails at
$\kappa_0$ and $\kappa_0^+$ has the tree property.

\item (Sinapova \cite{sinapovaomegasquared}) From $\omega$-many supercompact
cardinals, it is consistent that Neeman's result above holds with
$\kappa=\aleph_{\omega^2}$.
\end{enumerate}

Woodin's original question remains open; see \cite{sinapovaunger2} for the best
known partial result.  A theme in the above results is that questions about the
tree property are answered by first constructing a model where there are no
special $\kappa^+$-Aronszajn trees (or even $\kappa^+ \notin I[\kappa^+]$).  To
obtain the tree property, one needs to increase the large cardinal assumption
and to give a version of an argument of Magidor and Shelah \cite{magidorshelah},
who showed that the tree property holds at $\mu^+$ when $\mu$ is a singular
limit of supercompact cardinals.  The results of our paper are based on the
ideas from Sinapova's \cite{sinapovauncountable1}, but we expect that they will
generalize to give the tree property in the presence of stronger large cardinal
assumptions.

The paper is organized as follows.  In Section \ref{background} we give some
definitions and background material required for the main result.  In Section
\ref{mainposet} we describe the main forcing for Theorem \ref{thm1} and
prove some of its basic properties.
In Section \ref{arithmetic_section}, we show that the main forcing gives a model
with a club $C$ of cardinals where SCH fails.  In
Section \ref{weaksquare} we characterize which cardinals in this club $C$ have
weak square sequences and show that this set can be made non-stationary by
$\kappa$-distributive forcing, thus completing the proof of Theorem \ref{thm1}.
In Section \ref{conclusion} we make some concluding remarks and ask some open
questions.

\section{Background} \label{background}

In this section we will make the notions from the introduction precise and give
some further definitions that are relevant to the rest of the paper.

\begin{definition}
We say that $\nu$ \emph{has a weak square
sequence} ($\square^*_\nu$) if there is a sequence $\langle C_\gamma \mid \gamma
<\nu^+ \rangle$ such that
\begin{enumerate}
\item for all $\gamma < \nu^+$ limit, $C_\gamma \subset \power(\gamma)$ is
nonempty of size at most $\nu$ such that, for every $c \in C_\gamma$, $c \subset
\gamma$ is club in $\gamma$ with $\otp(c) \leq \nu$, and
\item for all $\beta < \gamma < \nu$, if $\beta$ is a limit point of
some $c \in C_\gamma$, then $c \cap \beta \in C_\beta$.
\end{enumerate}
\end{definition}

\begin{definition} Let $\vec{z} = \la z_\alpha \mid \alpha < \nu^+\ra$ be a
sequence of bounded subsets of $\nu^+$. We say that a limit ordinal $\gamma$ is
\emph{$\vec{z}$-approachable} if there is an unbounded set $A \subset \gamma$
with $\otp(A) = \cf(\gamma)$ such that, for every $\beta < \gamma$, $A \cap \beta
= z_\alpha$ for some $\alpha<\gamma$.  The \emph{approachability
ideal} $I[\nu^+]$ consists of all subsets $S \subset \nu^+$ for which there are
$\vec{z}$ as above and a club $C \subset \nu^+$ so that every $\gamma \in C \cap
S$ is $\vec{z}$-approachable.  \end{definition}

By arranging that $z_{\alpha+1}$ is the closure of $z_\alpha$ for each $\alpha <
\nu^+$, we may assume that for every $\vec{z}$-approachable point $\gamma$,
there is a witness $A \subset \gamma$ which is closed.

\subsection{Forcing preliminaries}\label{forcingprelim}

In this subsection, we describe the preparation of the ground model over which we will force
with our diagonal supercompact Radin forcing. Begin with a model $V_0$ in
which $\mathrm{GCH}$ holds and $\kappa < \theta$ are cardinals, with $\kappa$
supercompact. Force over $V_0$ with Laver's forcing \cite{laver} to make
the supercompactness of $\kappa$ indestructible under $\kappa$-directed closed
forcing, and then force over the resulting model to add $\theta$-many
Cohen subsets to $\kappa$. Call this final model $V$; it will be our ground
model for the remainder of the paper.

The following lemma holds as in \cite{sinapovauncountable1}.

\begin{lemma}
	For all $\alpha < \theta$, for all $\mathcal{X} \subseteq
	\mathcal{P}(\mathcal{P}_\kappa(\kappa^{+\alpha}))$, there are a normal, fine
	ultrafilter $U$ on $\mathcal{P}_\kappa(\kappa^{+\alpha})$ and functions
	$\langle f_\eta \mid \eta < \theta \rangle$ from $\kappa$ to $\kappa$ such
	that, letting $j:V \rightarrow M \cong \mathrm{Ult}(V, U)$, we have
	\begin{itemize}
		\item $\mathcal{X} \in M$;
		\item for all $\eta < \theta$, $j(f_\eta)(\kappa) = \eta$.
	\end{itemize}
\end{lemma}

Now, again as in \cite{sinapovauncountable1}, by recursion on $\alpha < \theta$,
we can construct a sequence of ultrafilters $\vec{U} = \langle U_\alpha \mid
\alpha < \theta \rangle$ and, for all $\alpha < \theta$, a sequence
$\langle f^\alpha_\eta \mid \eta < \theta \rangle$ such that the following hold.

\begin{itemize}
	\item For all $\alpha < \theta$, $U_\alpha$ is a normal, fine ultrafilter on
    $\mathcal{P}_\kappa(\kappa^{+\alpha})$. Let $j_\alpha : V \rightarrow M_\alpha
    \cong \mathrm{Ult}(V, U_\alpha)$ be the collapsed ultrapower map.
	\item For all $\alpha < \beta < \theta$, $U_\alpha \in M_\beta$.
	\item For all $\alpha < \theta$, $\kappa$ is $\kappa^{+\alpha}$-supercompact
	in $M_\alpha$.
	\item For all $\alpha, \eta < \theta$, we have $f^\alpha_\eta :
    \kappa \rightarrow \kappa$ and $j_\alpha(f^\alpha_\eta)(\kappa) = \eta$.
\end{itemize}

When we write that something happens for most (or for almost all) $x \in
\mathcal{P}_\kappa(\kappa^{+\alpha})$, we mean it happens for a
$U_\alpha$-measure one set. For $\alpha < \theta$, for most $x \in
\mathcal{P}_\kappa(\kappa^{+\alpha})$, $x \cap \kappa$ is an inaccessible
cardinal.  We will always work with such $x$ and will write $\kappa_x$ for $x \cap
\kappa$.  For $x, y \in \mathcal{P}_\kappa(\kappa^{+\alpha})$, $x \prec y$
denotes the statement that $x \subseteq y$ and $\mathrm{otp}(x) < \kappa_y$.

For $\alpha < \beta < \theta$, let $\bar{u}^\beta_\alpha$ be a function on
$\mathcal{P}_\kappa(\kappa^{+\beta})$ representing $U_\alpha$ in the ultrapower
by $U_\beta$. For most $x \in \mathcal{P}_\kappa(\kappa^{+\beta})$,
$\bar{u}^\beta_\alpha(x)$ is a measure on
$\mathcal{P}_{\kappa_x}(\kappa_x^{+f^\beta_\alpha(\kappa_x)})$. Also, for most
$x \in \mathcal{P}_{\kappa}(\kappa^{+\beta})$, $\mathrm{otp}(x \cap \kappa^{+\alpha}) =
\kappa_x^{+f^\beta_\alpha(\kappa_x)}$. For such $x$, $\bar{u}^\beta_\alpha(x)$ is
isomorphic to a measure $u^\beta_\alpha(x)$ on $\mathcal{P}_{\kappa_x}(x \cap
\kappa^{+\alpha})$ via the order-isomorphism between $\kappa_x^{+f^\beta_\alpha(\kappa_x)}$
and $x \cap \kappa^{+\alpha}$.

For $y \in \mathcal{P}_\kappa(\kappa^{+\beta})$, let
$Z^\beta_y = \{\alpha < \beta \mid \kappa^{+\alpha} \in y\}$. Note that, for
most $y \in \mathcal{P}_\kappa(\kappa^{+\beta})$, we have $Z^\beta_y =
y \cap \beta$, so the following results also hold with $y \cap \beta$ in place
of $Z^\beta_y$. We feel that $Z^\beta_y$ is the more natural set to consider
in the context of the forcing defined in Section \ref{mainposet}, so we
will use it instead.

\begin{lemma} \label{nicenessLemma}
	For most $y \in \mathcal{P}_\kappa(\kappa^{+\beta})$, the following hold.
	\begin{enumerate}
		\item $Z^\beta_y$ is ${<}\kappa_y$-closed.
		\item If $\cf(\beta) < \kappa$, then $\cf(\beta) < \kappa_y$ and $Z^\beta_y$ is unbounded in $\beta$.
		\item $\mathrm{otp}(Z^\beta_y) = f^\beta_\beta(\kappa_y)$ and, if $\beta$
      is a limit ordinal, then so is $f^\beta_\beta(\kappa_y)$. Also, if
      $cf(\beta) \geq \kappa$ then $\cf(f^\beta_\beta(\kappa_y)) \geq \kappa_y$.
		\item For all $\alpha \in Z^\beta_y$, $\mathrm{otp}(y \cap \kappa^{+\alpha}) =
      \kappa_y^{+f^\beta_\alpha(\kappa_y)} = \kappa_y^{+\mathrm{otp}(\alpha \cap Z^\beta_y)}$.
		\item $\kappa_y$ is $\kappa_y^{+f^\beta_\beta(\kappa_y)}$-supercompact.
		\item For all $\alpha \in Z^\beta_y$, $\bar{u}^\beta_\alpha(y)$ is a measure
      on $\mathcal{P}_{\kappa_y}(\kappa_y^{+f^\beta_\alpha(\kappa_y)})$.
		\item For all $\alpha_0 < \alpha_1$, both in $Z^\beta_y$, the function
      $x \mapsto \bar{u}^{\alpha_1}_{\alpha_0}(x)$ represents $\bar{u}^\beta_{\alpha_0}(y)$
      in the ultrapower by $u^\beta_{\alpha_1}(y)$.
	\end{enumerate}
\end{lemma}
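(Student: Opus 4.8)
The plan is to prove all six items simultaneously as a single $U_\beta$-measure-one statement, exploiting that each clause, when pulled through $j_\beta$ and evaluated at the generator $y_* := j_\beta[\kappa^{+\beta}]$, becomes a true statement in $M_\beta$ about $\kappa = \kappa_{y_*}$, $j_\beta(\vec U)\uhr\beta$, and the functions $j_\beta(f^\beta_\alpha)$. Since a finite conjunction of measure-one sets is measure one, it suffices to verify each clause in $M_\beta$ separately. Throughout I would write $Z := Z^\beta_{y_*} = j_\beta^{-1}[\{j_\beta(\kappa^{+\alpha}) \mid \alpha<\beta\}]$; the key observation is that $\alpha \mapsto j_\beta(\kappa^{+\alpha})$ is increasing and continuous at ordinals of cofinality $<\kappa$ (by fineness/normality of $U_\beta$, $j_\beta$ is continuous at such points since $\kappa^{+\alpha}$ has a cofinal sequence of length $<\kappa$ that $j_\beta$ fixes pointwise modulo $j_\beta[\cdot]$), so $Z$ records exactly those $\alpha<\beta$ with $\kappa^{+\alpha} \in y_*$.

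Clause (1): closure of $Z^\beta_y$ under ${<}\kappa_y$-sups reflects the fact that in $M_\beta$ the set of $\alpha < \beta$ with $j_\beta(\kappa^{+\alpha})$ in the range of $j_\beta[\kappa^{+\beta}]$ is ${<}\kappa$-closed, using continuity of $\alpha \mapsto j_\beta(\kappa^{+\alpha})$ below $\kappa$; clause (2) is the standard fact that $j_\beta$ is continuous at $\beta$ when $\cf(\beta)<\kappa$, pushed down, together with $\cf(\beta) < j_\beta(\kappa)$ giving $\cf(\beta)<\kappa_y$ for most $y$. For clause (3): $\otp(Z) = \otp(j_\beta[\beta]) = \beta$ computed in $M_\beta$ equals $j_\beta(f^\beta_\beta)(\kappa)$ by the defining property $j_\beta(f^\beta_\beta)(\kappa) = \beta$ (taking $\eta = \beta$ in the third bullet, extended to $\eta \le \beta$ or just using $f^\beta_\beta$ directly); limit-ness and the cofinality claim when $\cf(\beta)\ge\kappa$ transfer verbatim since $j_\beta$ preserves these for the relevant ordinals. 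Clause (4) combines $j_\beta(f^\beta_\alpha)(\kappa) = \alpha$ with $\otp(j_\beta[\kappa^{+\alpha}]) = \kappa^{+\alpha}$ in $M_\beta$ — i.e. $\otp(y_* \cap j_\beta(\kappa^{+\alpha})) = \kappa^{+\alpha} = \kappa^{+\otp(\alpha \cap j_\beta[\beta])}$; reflecting gives the stated equalities, and I would note $\kappa^{+\alpha}$ is computed correctly in $M_\beta$ because $M_\beta$ is closed under $\kappa^{+\beta}$-sequences (so it has the right cardinals up to $\kappa^{+\beta}$).

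Clause (5) is essentially the definition of $\bar u^\beta_\alpha$ pushed to the generator: $j_\beta(\bar u^\beta_\alpha)(y_*) = U_\alpha$, and $U_\alpha$ is (in $M_\beta$, since $U_\alpha \in M_\beta$ by the second bullet) a measure on $\mathcal P_\kappa(\kappa^{+\alpha})$ with $\kappa = \kappa_{y_*}$ and $\alpha = j_\beta(f^\beta_\alpha)(\kappa)$, so reflection yields clause (5). Clause (6) is the only one requiring genuine coherence rather than a one-line pushdown: I need, for $\alpha_0 < \alpha_1$ in $Z^\beta_y$, that $x \mapsto \bar u^{\alpha_1}_{\alpha_0}(x)$ represents $\bar u^\beta_{\alpha_0}(y)$ in $\Ult(V, u^\beta_{\alpha_1}(y))$. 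Pushing through $j_\beta$ and evaluating at $y_*$, this asks that $x \mapsto \bar u^{\alpha_1}_{\alpha_0}(x)$ represents $U_{\alpha_0}$ in $\Ult(M_\beta, U_{\alpha_1})$ — equivalently, that $j_{\alpha_1}(\bar u^{\alpha_1}_{\alpha_0})(j_{\alpha_1}[\kappa^{+\alpha_1}]) = U_{\alpha_0}$, computed inside $M_\beta$. But $\bar u^{\alpha_1}_{\alpha_0}$ was chosen precisely to represent $U_{\alpha_0}$ in $\Ult(V, U_{\alpha_1})$, i.e. $j_{\alpha_1}(\bar u^{\alpha_1}_{\alpha_0})(j_{\alpha_1}[\kappa^{+\alpha_1}]) = U_{\alpha_0}$ in $V$; this is a $\Sigma_1$-type fact about $U_{\alpha_0}, U_{\alpha_1}$, and $\Ult(V,U_{\alpha_1})$, all of which lie in $M_\beta$ (using $U_{\alpha_0}, U_{\alpha_1} \in M_\beta$ and sufficient agreement $M_\beta \cap H_{\kappa^{+\beta+1}} \supseteq$ the relevant objects), so it holds in $M_\beta$ as well. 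I expect this last reflection — checking that the relativization of "$g$ represents $U_{\alpha_0}$ in $\Ult(V, U_{\alpha_1})$" to $M_\beta$ still computes the correct ultrapower — to be the main technical obstacle, handled by noting $|\mathcal P_\kappa(\kappa^{+\alpha_1})| = \kappa^{+\alpha_1} < \kappa^{+\beta}$ so that $M_\beta$ correctly computes $\Ult(\cdot, U_{\alpha_1})$ and its membership relation. Once all six hold in $M_\beta$, the set of $y$ for which they simultaneously hold is in $U_\beta$, completing the proof.
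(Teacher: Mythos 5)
Your proposal is correct and follows essentially the same route as the paper: reflect each clause through $j_\beta$, evaluate at the generator $j_\beta``\kappa^{+\beta}$ using $Z^{j(\beta)}_{j``\kappa^{+\beta}} = j``\beta$ and $j(\bar{u}^\beta_\alpha)(j``\kappa^{+\beta}) = U_\alpha$, and for clause (6) relate the lifted measure to $U_{\xi_1}$ and appeal to the closure of $M_\beta$. The only point you compress is the justification of your word ``equivalently'' in (6): passing from the function $x \mapsto j(\bar{u}^{\xi_1}_{\xi_0})(x)$ on $\mathcal{P}_\kappa(j``\kappa^{+\xi_1})$ to $\bar{x} \mapsto \bar{u}^{\xi_1}_{\xi_0}(\bar{x})$ on $\mathcal{P}_\kappa(\kappa^{+\xi_1})$ uses that $\bar{u}^{\xi_1}_{\xi_0}(\bar{x})$ is a set of hereditary size below $\kappa$ and hence fixed by $j$, which is exactly the step the paper writes out.
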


\begin{proof}
  Let $j = j_\beta$. Recall that a set $A$ is in $U_\beta$ iff $j``\kappa^{+\beta} \in j(A)$.
  Note first that, defining $g:\mathcal{P}_\kappa(\kappa^{+\beta}) \rightarrow V$
  by $g(y) = Z^\beta_y$, we have $j(g)(j``\kappa^{+\beta}) = j``\beta$. Items (1)--(5) then follow easily.

	To show (6), let $j(\langle \bar{u}^\beta_\alpha \mid \alpha < \beta \rangle) = \langle
  \bar{v}^{j(\beta)}_\alpha \mid \alpha < j(\beta) \rangle$ and $j(\langle f^\beta_\alpha
  \mid \alpha < \beta \rangle) = \langle g^{j(\beta)}_\alpha \mid \alpha < j(\beta) \rangle$.
  It suffices to show that, in $M_\beta$, for all $\alpha \in j``\beta$,
  $\bar{v}^{j(\beta)}_\alpha(j``\kappa^{+\beta})$ is a measure on
  $\mathcal{P}_\kappa(\kappa^{+g^{j(\beta)}_\alpha(\kappa)})$.
  Let $\alpha \in j``\beta$, with, say, $\alpha = j(\xi)$. Then
  $\bar{v}^{j(\beta)}_\alpha(j``\kappa^{+\beta}) = j(\bar{u}^\beta_\xi)(j``\kappa^{+\beta}) = U_\xi$,
  which is a measure on $\mathcal{P}_\kappa(\kappa^{+\xi}) =
  \mathcal{P}_\kappa(\kappa^{+g^{j(\beta)}_\alpha(\kappa)})$.

	We finally show (7). Let $j(\langle \bar{u}^{\alpha_1}_{\alpha_0} \mid \alpha_0 < \alpha_1 \leq
  \beta \rangle) = \langle \bar{v}^{\alpha_1}_{\alpha_0} \mid \alpha_0 < \alpha_1 \leq j(\beta) \rangle$
  and $j(\langle u^\beta_\alpha \mid \alpha < \beta \rangle) = \langle v^{j(\beta)}_\alpha \mid \alpha <
  j(\beta) \rangle$. It suffices to show that, in $M_\beta$, for all $\alpha_0 < \alpha_1$, both in
  $j``\beta$, the function $x \mapsto \bar{v}^{\alpha_1}_{\alpha_0}(x)$ represents
  $\bar{v}^{j(\beta)}_{\alpha_0}(j``\kappa^{+\beta})$ in the ultrapower by
  $v^{j(\beta)}_{\alpha_1}(j``\kappa^{+\beta})$. Fix $\alpha_0 < \alpha_1$ in $j``\beta$, with
  $\alpha_0 = j(\xi_0)$ and $\alpha_1 = j(\xi_1)$. Note that
	$\hat{U}_{\xi_1} := v^{j(\beta)}_{\alpha_1}(j``\kappa^{+\beta})$
  is a measure on $\mathcal{P}_\kappa(j``\kappa^{+\xi_1})$ that collapses to $U_{\xi_1}$.  Also note that $\bar{v}^{j(\beta)}_{\alpha_0}(j``\kappa^{+\beta}) = U_{\xi_0}$.
  Thus, we must show that the function $x \mapsto \bar{v}^{\alpha_1}_{\alpha_0}(x)$ represents $U_{\xi_0}$
  in the ultrapower by $\hat{U}_{\xi_1}$.

	Fix $x \in \mathcal{P}_\kappa(j``\kappa^{+\xi_1})$. There
  is $\bar{x} \in \mathcal{P}_\kappa(\kappa^{+\beta})$ such that $x = j(\bar{x})$.
  Then $\bar{v}^{\alpha_1}_{\alpha_0}(x) = j(\bar{u}^{\xi_1}_{\xi_0}(\bar{x}))$.
  For most $\bar{x} \in \mathcal{P}_\kappa(\kappa^{+\xi_1})$,
  $\bar{u}^{\xi_1}_{\xi_0}(\bar{x})$ is a measure on
  $\mathcal{P}_{\kappa_{\bar{x}}}(\kappa_{\bar{x}}^{+f^{\xi_1}_{\xi_0}(\kappa_{\bar{x}})})$,
  and this is fixed by $j$. Thus, for most $x \in \mathcal{P}_\kappa(j``\kappa^{+\xi_1})$,
  $\bar{v}^{\alpha_1}_{\alpha_0}(x) = \bar{u}^{\xi_1}_{\xi_0}(\bar{x})$. Therefore, since $\hat{U}_{\xi_1}$
  collapses to $U_{\xi_1}$, $x \mapsto \bar{v}^{\alpha_1}_{\alpha_0}(x)$ represents the same thing
  in the ultrapower by $\hat{U}_{\xi_1}$ as $\bar{x} \mapsto \bar{u}^{\xi_1}_{\xi_0}(x)$ represents in
  the ultrapower by $U_{\xi_1}$, which is $U_{\xi_0}$. This is true in $V$ and, since $M_\beta$ is
  sufficiently closed, it is true in $M_\beta$ as well.
\end{proof}

\begin{lemma} \label{addLemma1}
	Suppose $\beta < \theta$ and, for all $\alpha \leq \beta$, $A_\alpha \in U_\alpha$. Let $A^*$ be the
  set of all $y \in A_\beta$ such that, for all $\alpha \in Z^\beta_y$, $\{x \in A_\alpha \mid
  x \prec y \} \in u^\beta_\alpha(y)$. Then $A^* \in U_\beta$.
\end{lemma}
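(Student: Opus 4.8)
The plan is to prove $A^* \in U_\beta$ through the ultrapower characterization used in the proof of Lemma~\ref{nicenessLemma}: writing $j = j_\beta$ and $y := j``\kappa^{+\beta}$, it suffices to show $y \in j(A^*)$. By elementarity, $j(A^*)$ is given by the same recipe as $A^*$, with $\beta$ and the sequences $\la A_\alpha \mid \alpha \le \beta\ra$, $\la u^\beta_\alpha \mid \alpha < \beta\ra$ replaced by $j(\beta)$ and their $j$-images; write $\la v^{j(\beta)}_\alpha \mid \alpha < j(\beta)\ra = j(\la u^\beta_\alpha \mid \alpha < \beta\ra)$ and note that, for $\xi \le \beta$, the $j(\xi)$-th term of $j(\la A_\alpha \mid \alpha \le \beta\ra)$ is $j(A_\xi)$. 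Evaluating the recipe at $y$: membership in the base set $j(A_\beta)$ holds precisely because $A_\beta \in U_\beta$, and, exactly as in the proof of Lemma~\ref{nicenessLemma}, $Z^{j(\beta)}_y = j``\beta$. So it remains, for each $\xi < \beta$, to verify that $\{x \in j(A_\xi) \mid x \prec y\} \in v^{j(\beta)}_{j(\xi)}(y)$.

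Next I would identify the measure $v^{j(\beta)}_{j(\xi)}(y)$, just as the corresponding objects are identified in the proof of Lemma~\ref{nicenessLemma}(5)--(6). Writing $\la \bar{v}^{j(\beta)}_\alpha \mid \alpha < j(\beta)\ra = j(\la \bar{u}^\beta_\alpha \mid \alpha < \beta\ra)$, the measure $v^{j(\beta)}_{j(\xi)}(y)$ is by construction the natural lift of $\bar{v}^{j(\beta)}_{j(\xi)}(y)$, and $\bar{v}^{j(\beta)}_{j(\xi)}(y) = j(\bar{u}^\beta_\xi)(y) = U_\xi$, since $\bar{u}^\beta_\xi$ represents $U_\xi$ in the ultrapower by $U_\beta$. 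Since $y \cap \kappa^{+j(\xi)} = j``\kappa^{+\xi}$ and $\kappa_y = \kappa$, it follows that $v^{j(\beta)}_{j(\xi)}(y)$ is the lift $\hat{U}_\xi$ of $U_\xi$ to a measure on $\mathcal{P}_\kappa(j``\kappa^{+\xi})$ along the order isomorphism $(j \upharpoonright \kappa^{+\xi})^{-1} : j``\kappa^{+\xi} \to \kappa^{+\xi}$; equivalently, for $B \subseteq \mathcal{P}_\kappa(j``\kappa^{+\xi})$ we have $B \in \hat{U}_\xi$ if and only if $\{t \in \mathcal{P}_\kappa(\kappa^{+\xi}) \mid j``t \in B\} \in U_\xi$.

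It then remains to check that $\{x \in j(A_\xi) \mid x \prec y\} \in \hat{U}_\xi$. Since $\hat{U}_\xi$ concentrates on $\mathcal{P}_\kappa(j``\kappa^{+\xi})$, and every $x \subseteq j``\kappa^{+\xi}$ automatically satisfies $x \subseteq y$ and $\otp(x) < \kappa = \kappa_y$ --- hence $x \prec y$ --- the set in question, restricted to the support of $\hat{U}_\xi$, is simply $j(A_\xi) \cap \mathcal{P}_\kappa(j``\kappa^{+\xi})$. By the translation above this lies in $\hat{U}_\xi$ if and only if $\{t \in \mathcal{P}_\kappa(\kappa^{+\xi}) \mid j``t \in j(A_\xi)\} \in U_\xi$. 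Finally, $|t| < \kappa = \mathrm{crit}(j)$ forces $j``t = j(t)$, so $\{t \mid j``t \in j(A_\xi)\} = \{t \mid j(t) \in j(A_\xi)\} = A_\xi$ by elementarity, and $A_\xi \in U_\xi$ by hypothesis; this completes the verification. The one genuinely delicate step is the identification of $v^{j(\beta)}_{j(\xi)}(y)$ with the lift of $U_\xi$: this I expect to be the main obstacle, and it is handled exactly as the parallel computations in Lemma~\ref{nicenessLemma}, the rest being elementarity together with the fact that $j``t = j(t)$ on sets of size $<\kappa$. One should also note at the outset that $A_\beta$ may harmlessly be shrunk to lie inside the measure-one set furnished by Lemma~\ref{nicenessLemma}, so that $u^\beta_\alpha(z)$ is genuinely defined whenever $z \in A_\beta$ and $\alpha \in Z^\beta_z$.
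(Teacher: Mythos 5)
Your proposal is correct and follows essentially the same route as the paper: reduce to showing $j``\kappa^{+\beta} \in j(A^*)$, identify the relevant measure at each $j(\xi) \in j``\beta$ as the lift $\hat{U}_\xi$ of $U_\xi$ (via the computation already done for Lemma~\ref{nicenessLemma}), and observe that $\{x \in j(A_\xi) \mid x \prec j``\kappa^{+\beta}\} = j``A_\xi$, which is $\hat{U}_\xi$-large because $j``t = j(t)$ for $|t| < \kappa$. The paper's proof is just a terser version of the same argument.
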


\begin{proof}
	Let $j = j_\beta$. It suffices to show that $j``\kappa^{+\beta} \in j(A^*)$, i.e. for all
  $j(\alpha) \in j``\beta$, $\{x \in j(A_\alpha) \mid x \prec j``\kappa^{+\beta} \} \in \hat{U}_\alpha$,
  where $\hat{U}_\alpha$ is the isomorphic copy of $U_\alpha$ living on $\mathcal{P}_\kappa(j``\kappa^{+\alpha})$.
  Fix such a $j(\alpha)$. Let $X = \{x \in j(A_\alpha) \mid x \prec j``\kappa^{+\beta} \}$,
  and note that $X = j``A_\alpha \in \hat{U}_\alpha$.
\end{proof}

\begin{lemma} \label{addLemma2}
	Suppose $\gamma < \theta$, $z \in \mathcal{P}_\kappa(\kappa^{+\gamma})$, and $z$ satisfies all of the
  statements in Lemma \ref{nicenessLemma}. Suppose that, for all $\alpha \in Z^\gamma_z$,
  $A_\alpha \in u^\gamma_\alpha(z)$. Fix $\beta \in Z^\gamma_z$, and let $A^*$ be the set of
  $y \in A_\beta$ such that, for all $\alpha \in Z^\beta_y$, $\{x \in A_\alpha \mid x \prec y \} \in
  u^\beta_\alpha(y)$. Then $A^* \in u^\gamma_\beta(z)$.
\end{lemma}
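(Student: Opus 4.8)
The plan is to carry out, relativized to $z$, the ultrapower argument that proved Lemma~\ref{addLemma1}. Let $W = u^\gamma_\beta(z)$, a normal fine measure on $\mathcal{P}_{\kappa_z}(z \cap \kappa^{+\beta})$, let $i \colon V \to N \cong \Ult(V, W)$ be the ultrapower embedding, and let $s = [\mathrm{id}]_W = i``(z \cap \kappa^{+\beta})$ be the seed; then $\mathrm{crit}(i) = \kappa_z$ and $\kappa_s := s \cap i(\kappa_z) = \kappa_z$. Since $W$ is normal and fine, $A^* \in W$ iff $s \in i(A^*)$, so it suffices to prove the latter (after first shrinking $A_\beta$, harmlessly since this only shrinks $A^*$, to a $W$-measure-one set on which $u^\beta_\alpha(y)$ is defined for all $y \in A_\beta$ and $\alpha \in Z^\beta_y$). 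Now $s \in i(A_\beta)$ because $s$ is the seed and $A_\beta \in W$, so, unwinding the definition of $A^*$ through $i$, it remains to check that for every $\alpha^* \in Z^{i(\beta)}_s$ the set $\{x \in i(\la A_\alpha \mid \alpha \in Z^\gamma_z \ra)_{\alpha^*} \mid x \prec s\}$ (computed in $N$) lies in $u^{i(\beta)}_{\alpha^*}(s)$ (computed in $N$). An elementarity computation identifies the index set: $Z^{i(\beta)}_s = i``(Z^\gamma_z \cap \beta)$, since $i(\kappa)^{+\alpha^*} \in s$ forces $i(\kappa)^{+\alpha^*} = i(\xi)$ for some $\xi \in z \cap \kappa^{+\beta}$, whence $\xi = \kappa^{+\alpha_0}$ with $\alpha_0 \in Z^\gamma_z \cap \beta$ and $\alpha^* = i(\alpha_0)$ (the converse being clear). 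Thus every relevant index is $i(\alpha_0)$ with $\alpha_0 \in Z^\gamma_z \cap \beta$, and $i(\la A_\alpha \mid \alpha \in Z^\gamma_z \ra)_{i(\alpha_0)} = i(A_{\alpha_0})$.

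So fix $\alpha_0 \in Z^\gamma_z \cap \beta$. Exactly as in the proof of Lemma~\ref{addLemma1}, the set $\{x \in i(A_{\alpha_0}) \mid x \prec s\}$ is just $i``A_{\alpha_0}$: any such $x$ lies in $s \cap i(\kappa^{+\alpha_0}) = i``(z \cap \kappa^{+\alpha_0})$, hence equals $i``\bar x$ with $\bar x = i^{-1}``x \subseteq z \cap \kappa^{+\alpha_0}$; and $x \prec s$ forces $\otp(\bar x) = \otp(x) < \kappa_s = \kappa_z = \mathrm{crit}(i)$, so $\bar x \in \mathcal{P}_{\kappa_z}(z \cap \kappa^{+\alpha_0})$, $i(\bar x) = x$, and $x \in i(A_{\alpha_0})$ iff $\bar x \in A_{\alpha_0}$; conversely $i``\bar x \prec s$ for every $\bar x \in A_{\alpha_0}$. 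It therefore remains to see that $i``A_{\alpha_0} \in u^{i(\beta)}_{i(\alpha_0)}(s)$, and this is where Lemma~\ref{nicenessLemma} does the work. Applying clause~(6) to $z$, with $\gamma$ as the top index and $\alpha_0 < \beta$ as the relevant pair of elements of $Z^\gamma_z$, the function $y \mapsto \bar u^\beta_{\alpha_0}(y)$ represents $\bar u^\gamma_{\alpha_0}(z)$ in $\Ult(V, W)$, so, evaluating at the seed, $\bar u^{i(\beta)}_{i(\alpha_0)}(s) = \bar u^\gamma_{\alpha_0}(z)$ in $N$. Combined with clause~(4) applied to $z$, which gives $\otp(s \cap i(\kappa^{+\alpha_0})) = \otp(z \cap \kappa^{+\alpha_0}) = \kappa_z^{+f^\gamma_{\alpha_0}(\kappa_z)}$, this shows that $u^{i(\beta)}_{i(\alpha_0)}(s)$ is the lift of $\bar u^\gamma_{\alpha_0}(z)$ to $\mathcal{P}_{\kappa_z}(i``(z \cap \kappa^{+\alpha_0}))$, whereas $u^\gamma_{\alpha_0}(z)$ is the lift of the same measure to $\mathcal{P}_{\kappa_z}(z \cap \kappa^{+\alpha_0})$. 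The order isomorphism $i(\xi) \mapsto \xi$ from $i``(z \cap \kappa^{+\alpha_0})$ onto $z \cap \kappa^{+\alpha_0}$ carries one lift to the other and sends $i``A_{\alpha_0}$ to $A_{\alpha_0}$, so $i``A_{\alpha_0} \in u^{i(\beta)}_{i(\alpha_0)}(s)$ iff $A_{\alpha_0} \in u^\gamma_{\alpha_0}(z)$, which holds by hypothesis. Hence $s \in i(A^*)$, as required.

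The step that I expect to require the most care is the measure bookkeeping in the second paragraph: one must be sure that the measures $u^{i(\beta)}_{i(\alpha_0)}(s)$ living inside $N$ are correctly identified, via clauses~(4) and~(6) of Lemma~\ref{nicenessLemma} applied to $z$, with the liftings of the ground-model measures $u^\gamma_{\alpha_0}(z)$ from the statement, and that the elementarity translations of $\prec$, of the $Z$-function, and of the indexing all match up. Conceptually this is an instance of a general phenomenon: by Lemma~\ref{nicenessLemma} the family $\la u^\gamma_\alpha(z) \mid \alpha \in Z^\gamma_z \ra$, reindexed by $\alpha \mapsto \otp(\alpha \cap Z^\gamma_z)$, is a system of normal fine measures below $\kappa_z$ satisfying exactly the hypotheses that $\vec{U}$ satisfies below $\kappa$, so that Lemma~\ref{addLemma2} is literally an instance of Lemma~\ref{addLemma1}; the argument above is a direct verification of that instance.
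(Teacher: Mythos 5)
Your proposal is correct and follows essentially the same route as the paper's proof: form the ultrapower by $u^\gamma_\beta(z)$, reduce to showing the seed $i``(z\cap\kappa^{+\beta})$ lies in $i(A^*)$, identify the relevant index set as $i``(Z^\gamma_z\cap\beta)$, and use clause (6) of Lemma \ref{nicenessLemma} to see that the measure at the seed is (a lift of) $\bar u^\gamma_{\alpha_0}(z)$, so that $\{x\in i(A_{\alpha_0})\mid x\prec s\}=i``A_{\alpha_0}$ has measure one because $A_{\alpha_0}\in u^\gamma_{\alpha_0}(z)$. The only difference is presentational: you track the identification via the order isomorphism $i(\xi)\mapsto\xi$ explicitly, where the paper phrases the same step in terms of the collapsed sets $\bar A_\alpha$.
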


\begin{proof}
  For each $\alpha \in Z^\gamma_z$, let $\bar{A}_\alpha$ be the collapsed version of $A_\alpha$, so
  $\bar{A}_\alpha \in \bar{u}^\gamma_\alpha(z)$. Recall that $u^\gamma_\beta(z)$ is a measure on
  $\mathcal{P}_{\kappa_z}(z \cap \kappa^{+\beta})$. Let $k:V \rightarrow N \cong \mathrm{Ult}(V, u^\gamma_\beta(z))$
  be the ultrapower map. By (6) of Lemma \ref{nicenessLemma}, for all $\alpha \in Z^\gamma_z \cap \beta$,
  the map $y \mapsto \bar{u}^\beta_\alpha(y)$ represents $\bar{u}^\gamma_\alpha(z)$ in the ultrapower.
  Note also that the map $y \mapsto Z^\beta_y$ represents $\{\eta < k(\beta) \mid k(\kappa)^{+\eta}
  \in k``(z \cap \kappa^{+\beta})\} = k``(Z^\gamma_z \cap \beta)$. To prove the lemma, it suffices to show
  that $k``(z \cap \kappa^{+\beta}) \in k(A^*)$, i.e. for all $\alpha \in Z^\gamma_z \cap \beta$,
  $\{x \in k(A_\alpha) \mid x \prec k``(z \cap \kappa^{+\beta})\}$ is in the measure represented by
  the map $y \mapsto u^\beta_\alpha(y)$. Call this measure $w$ and note that it is isomorphic to
  $\bar{u}^\gamma_\alpha(z)$. Also note that $\{x \in k(A_\alpha) \mid x \prec k``(z \cap \kappa^{+\beta})\} =
  k``(A_\alpha)$, which collapses to $\bar{A}_\alpha \in \bar{u}^\gamma_\alpha(z)$. Thus, $\{x \in k(A_\alpha)
  \mid x \prec k``(z \cap \kappa^{+\beta})\} \in w$, completing the proof of the lemma.
\end{proof}

\section{The main forcing} \label{mainposet}

For $\beta < \theta$, let $X_\beta$ be the set of $y \in
\mathcal{P}_\kappa(\kappa^{+\beta})$ satisfying all of the statements in Lemma
\ref{nicenessLemma}. Fix $\eta < \theta$.  We define a forcing notion,
$\bb{P}_{\vec{U}, \eta}$. Conditions of $\bb{P}_{\vec{U}, \eta}$ are pairs $(a,
A)$ satisfying the following requirements.

\begin{enumerate}
	\item{$a$ and $A$ are functions, $\dom(a)$ is a finite subset of $\theta \setminus \eta$, and $\dom(A) = \theta \setminus (\dom(a) \cup \eta)$.}
	\item{For all $\beta \in \dom(a)$, $a(\beta) \in X_\beta$.}
	\item{For all $\alpha < \beta$, both in $\dom(a)$, $a(\alpha) \prec a(\beta)$ and $\alpha \in Z^\beta_{a(\beta)}$.}
	\item{For all $\alpha \in \theta \setminus (\max(\dom(a))+1)$ (or, if $\dom(a) = \emptyset$, for all $\alpha \in \dom(A)$), $A(\alpha) \in U_\alpha$.}
	\item{For all $\alpha \in \dom(A) \cap \max(\dom(a))$, if $\beta = \min(\dom(a) \setminus \alpha)$, then $A(\alpha) \in u^\beta_\alpha(a(\beta))$ if $\alpha \in Z^\beta_{a(\beta)}$ and $A(\alpha) = \emptyset$ if $\alpha \not\in Z^\beta_{a(\beta)}$.}
	\item{For all $\beta \in \dom(A)$ such that $A(\beta) \not= \emptyset$ and $\dom(a) \cap \beta \not= \emptyset$, if $\alpha = \max(\dom(a) \cap \beta)$, then for all $y \in A(\beta)$, $a(\alpha) \prec y$ and $\alpha \in Z^\beta_y$.}
\end{enumerate}

If $(a, A), (b, B) \in \bb{P}_{\vec{U}, \eta}$, then $(b, B) \leq (a, A)$ iff the following requirements hold.
\begin{enumerate}
	\item{$b \supseteq a$.}
	\item{For all $\alpha \in \dom(b) \setminus \dom(a)$, $b(\alpha) \in A(\alpha)$.}
	\item{For all $\alpha \in \dom(B)$, $B(\alpha) \subseteq A(\alpha)$.}
\end{enumerate}

$(b,B) \leq^* (a,A)$ if $(b,B) \leq (a,A)$ and $b = a$. In this case, $(b,B)$ is called a
\emph{direct extension} of $(a,A)$.

\begin{remark}
	In our arguments, for notational simplicity we will typically assume that $\eta = 0$
  and then denote $\bb{P}_{\vec{U}, \eta}$ as $\bb{P}_{\vec{U}}$. Everything proved
  about $\bb{P}_{\vec{U}}$ can be proved for a general $\bb{P}_{\vec{U}, \eta}$ in
  the same way by making the obvious changes. The reason we introduce the more general
  forcing is to be able to properly state the Factorization Lemma (\ref{factorizationLemma}).
\end{remark}

In what follows, let $\bb{P}$ denote $\bb{P}_{\vec{U}}$. For any condition
$p = (a, A) \in \bb{P}$, we often denote $(a, A)$ as $(a^p, A^p)$ and let
$\gamma^p = \max(\dom(a^p))$. We refer to $a^p$ as the \emph{stem} of $p$.
Note that, if $p, q \in \bb{P}$ and $a^p = a^q$, then $p$ and $q$ are compatible.
If $a$ is a non-empty stem, then let $\gamma^a$ denote $\max(\dom(a))$, and let
$a^- = a \restriction \gamma^a$. Suppose $a$ is a stem, $\alpha < \theta$, and
$x \in X_\alpha$. Suppose moreover that either $a$ is empty or $\gamma^a < \alpha$,
$a(\gamma^a) \prec x$, and $\gamma^a \in Z^\alpha_x$. Then $a ^\frown (\alpha, x)$
is a stem and $(a ^\frown (\alpha, x))^- = a$. If $p \in \bb{P}$ and $b$ is a stem,
then $b$ is \emph{possible} for $p$ if there is $q \leq p$ with $a^q = b$. If
$p \in \bb{P}$ and $b$ is possible for $p$, then $p \downarrow b$ denotes
the maximal $q$
such that $q \leq p$ and $a^q = b$. Such a $q$ always exists.

\begin{lemma} \label{addLemma3}
	Suppose that $(a, A) \in \bb{P}$, $\beta \in \dom(A)$, and $A(\beta) \not= \emptyset$.
  Then there is $(b, B) \leq (a, A)$ such that $\beta \in \dom(b)$.
\end{lemma}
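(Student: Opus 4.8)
The plan is to realize the coordinate $\beta$ by choosing a suitable point $x$, setting $b = a \cup \{(\beta, x)\}$, and shrinking $A$ to a function $B$ with $\dom(B) = \theta \setminus (\dom(b) \cup \eta)$; the condition $(b,B)$ will be the desired extension of $(a,A)$. Throughout, let $\alpha_1 = \max(\dom(a) \cap \beta)$ when $\dom(a) \cap \beta \neq \emptyset$, and $\beta_0 = \min(\dom(a) \setminus \beta)$ when $\dom(a) \cap (\beta, \theta) \neq \emptyset$. Since $A(\beta) \neq \emptyset$, clause (5) of the definition of $\bb{P}$ (or clause (4), when $\beta_0$ is undefined) shows $A(\beta) \in W$, where $W := u^{\beta_0}_\beta(a(\beta_0))$ if $\beta_0$ is defined -- in which case also $\beta \in Z^{\beta_0}_{a(\beta_0)}$ -- and $W := U_\beta$ otherwise.

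First I would choose $x \in A(\beta)$. Several of the conditions needed for $a \cup \{(\beta, x)\}$ to be a stem are automatic: by clause (6) for $(a,A)$, every $y \in A(\beta)$ satisfies $a(\alpha_1) \prec y$ and $\alpha_1 \in Z^\beta_y$, so -- using transitivity of $\prec$ and the fact that $\kappa^{+\alpha} \in z$ and $z \prec z'$ imply $\kappa^{+\alpha} \in z'$ -- any $x \in A(\beta)$ is $\prec$-above all of $a \uhr \beta$; and when $\beta_0$ is defined, every $x \in A(\beta)$ is a subset of $a(\beta_0)$ of order type below $\kappa_{a(\beta_0)}$ while $\kappa^{+\beta} \in a(\beta_0)$, so $x$ is $\prec$-below all of $a$ above $\beta$ with $\beta \in Z^\gamma_{a(\gamma)}$ there. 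What is not automatic is, first, that $x \in X_\beta$, and second, and more delicately, that for every $\alpha$ with $\alpha_1 < \alpha < \beta$ and $\alpha \in Z^\beta_x$ the re-aimed set $\{w \in A(\alpha) \mid w \prec x\}$ still belongs to $u^\beta_\alpha(x)$. The latter is exactly what Lemma \ref{addLemma2} supplies when $\beta_0$ is defined -- and Lemma \ref{addLemma1} when it is not: applied with $A_\alpha := A(\alpha)$ for $\alpha_1 < \alpha < \beta$ and $A_\alpha$ the full space for the remaining $\alpha$ below $\beta$ (all of which lie in the appropriate measures), they produce a set $A^* \in W$ consisting of exactly the good candidates $y$. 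Since $X_\beta \in U_\beta$ and, for the local measures $u^\gamma_\delta(\cdot)$, the niceness properties of Lemma \ref{nicenessLemma} likewise hold almost everywhere (which one can fold into the $X_\alpha$'s and carry through the two lemmas), $A^* \cap X_\beta \in W$; fix $x$ in it.

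Now put $b = a \cup \{(\beta, x)\}$, a stem by the above, and define $B$ on $\dom(A) \setminus \{\beta\}$ by three cases. If $\alpha$ is not adjacent to $\beta$ in $\dom(b)$, i.e. its immediate predecessor and successor in $\dom(b)$ coincide with those in $\dom(a)$, put $B(\alpha) := A(\alpha)$; clauses (4)-(6) of $\bb{P}$ for $(b,B)$ at $\alpha$ are then literally those for $(a,A)$. If $\alpha_1 < \alpha < \beta$, so that $\beta$ is the new covering point of $\alpha$, put $B(\alpha) := \{w \in A(\alpha) \mid w \prec x\}$ when $\alpha \in Z^\beta_x$ and $B(\alpha) := \emptyset$ otherwise; then $B(\alpha) \in u^\beta_\alpha(x)$ by the choice $x \in A^*$, and clause (6) for $(b,B)$ at $\alpha$ is inherited from $(a,A)$ since the predecessor of $\alpha$ is unchanged. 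Finally, if $\beta < \alpha$ and no element of $\dom(a)$ lies strictly between $\beta$ and $\alpha$, so that $\beta$ is the new predecessor of $\alpha$, put $B(\alpha) := \{y \in A(\alpha) \mid x \prec y \text{ and } \beta \in Z^\alpha_y\}$; the covering measure of $\alpha$ is unchanged here (it is $U_\alpha$, or $u^{\beta_0}_\alpha(a(\beta_0))$ when $\beta_0$ is defined and $\alpha < \beta_0$), and $B(\alpha)$ is in it by a short ultrapower computation -- the seed is $\prec$-above the image of $x$ because $|x|$ is below the relevant critical point, and the image of $\beta$ is in the $Z$-set of the seed: for $U_\alpha$ because $\beta < \alpha$, and for $u^{\beta_0}_\alpha(a(\beta_0))$ because $y \mapsto Z^\alpha_y$ represents $k``(Z^{\beta_0}_{a(\beta_0)} \cap \alpha)$ (as shown inside the proof of Lemma \ref{addLemma2}) while $\beta \in Z^{\beta_0}_{a(\beta_0)}$.

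It then remains to observe $(b,B) \leq (a,A)$ -- immediate, since $b \supseteq a$, $b(\beta) = x \in A(\beta)$, and $B(\alpha) \subseteq A(\alpha)$ for all $\alpha$ by construction -- and to finish verifying that $(b,B)$ is a condition, i.e. the remaining instances of clauses (4)-(6), which all drop out of the case analysis together with transitivity of $\prec$ and the upward propagation of $Z$-membership along $\prec$. I expect the main obstacle to be the interplay of the second and third cases above: keeping the re-aimed measure-one sets large. The second case is precisely what Lemmas \ref{addLemma1} and \ref{addLemma2} were built to handle, so the work there is bookkeeping; the third case needs the short ultrapower computation just indicated, whose only delicate point is correctly identifying the $Z$-set of the seed of the localized measure $u^{\beta_0}_\alpha(a(\beta_0))$.
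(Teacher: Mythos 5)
Your argument is correct and is precisely the ``straightforward'' verification the paper has in mind: its entire proof is a citation of Lemmas \ref{nicenessLemma}, \ref{addLemma1}, and \ref{addLemma2}, which you deploy in exactly the intended way (Lemma \ref{addLemma1} or \ref{addLemma2} for the coordinates below $\beta$ acquiring the new covering point, a seed computation for the coordinates whose predecessor becomes $\beta$, and the niceness sets to keep $x \in X_\beta$). No gaps.
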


\begin{proof}
	Let $\gamma := \min(\dom(a) \setminus \beta)$ if $\beta < \max(\dom(a))$,
	and let $\gamma := \theta$ otherwise. Let $\alpha_0 :=
	\max(\dom(a) \cap \beta)$ if $\dom(a) \cap \beta \neq \emptyset$ and
	$\alpha_0 = -1$ otherwise. By Lemma \ref{addLemma1} (if
	$\gamma = \theta$) or Lemma \ref{addLemma2} (if $\gamma < \theta$), we can find
	$y \in A(\beta)$ such that
	\begin{itemize}
		\item if $\dom(a) \cap \beta \neq \emptyset$, then $\alpha_0 \in Z^\beta_y$ and $a(\alpha_0) \prec y$;
		\item for all $\alpha \in Z^\beta_y \setminus (\alpha_0 + 1)$, we have
		$\{x \in A(\alpha) \mid x \prec y\} \in u^\beta_\alpha(y)$.
	\end{itemize}
	Now define a condition $(b, B)$ as follows. Let $\dom(b) = \dom(a) \cup \{\beta\}$,
	$b \restriction \dom(a) = a$, and $b(\beta) = y$. Let $\dom(B) = \theta \setminus
	\dom(b)$. For all $\alpha \in (\alpha_0, \beta)$, let $B(\alpha) =
	\{x \in A(\alpha) \mid x \prec y\}$. For $\alpha \in (\beta, \gamma)$, let
	$B(\alpha) = \{x \in A(\alpha) \mid y \prec x\}$. For all other
	$\alpha \in \dom(B)$, let $B(\alpha) = A(\alpha)$. It is easily verified that
	$(b, B)$ is a condition in $\bb{P}$ extending $(a, A)$.
\end{proof}

\begin{definition}
Suppose that $G$ is a $\bb{P}$-generic over $V$.
Let $C^{\spc}_G$ ($\spc$ for supercompact) be the set of all points $x = a(\beta)$ where $\beta \in \dom(a)$ for some $p = (a,A)$ in the generic filter $G$, and
let $C_G = \{ \kappa_x \mid x \in C^{\spc}_G\}$ be the generic Radin club.
\end{definition}

\begin{lemma} \label{genericClubLemma}
  $C_G$ is club in $\kappa$ and the assignment $x \mapsto \kappa_x = x \cap \kappa$ is an increasing bijection from $C^{\spc}_G$ to $C_G$.
\end{lemma}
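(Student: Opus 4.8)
The plan is to analyze the structure of the generic object $C^{\spc}_G$ using the coherence built into the conditions of $\bb{P}$, and then transfer the relevant properties along the map $x \mapsto \kappa_x$. First I would verify that the map $x \mapsto \kappa_x$ is injective on $C^{\spc}_G$: given two distinct points $x, y \in C^{\spc}_G$, by genericity (and the fact that any two conditions with a common stem are compatible, together with Lemma \ref{addLemma3}) there is a single condition $(a, A) \in G$ with both $x, y \in \mathrm{ran}(a)$; then clause (3) in the definition of a condition gives $x \prec y$ or $y \prec x$, and in either case $\mathrm{otp}$ of the smaller is below $\kappa$ of the larger, forcing $\kappa_x \ne \kappa_y$. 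The same clause (3), applied as $x \prec y$, gives $\kappa_x = \mathrm{otp}(x) \cap \kappa \le \mathrm{otp}(x) < \kappa_y$ when $\mathrm{otp}(x) \ge \kappa_x$, so the map is order-preserving from $(C^{\spc}_G, \prec)$ (equivalently $(C^{\spc}_G, \subsetneq)$ on a tail) onto $C_G$; surjectivity onto $C_G$ is immediate from the definition of $C_G$. So the bijection part reduces to routine bookkeeping with clause (3).

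Next I would show $C_G$ is unbounded in $\kappa$. The key genericity fact is that for any $(a, A) \in \bb{P}$ and any $\delta < \kappa$, the set of conditions $(b, B)$ such that some $\kappa_{b(\beta)} > \delta$ is dense below $(a, A)$: using Lemma \ref{addLemma3} we may extend $(a, A)$ to add a point at any coordinate $\beta \in \dom(A)$ with $A(\beta) \ne \emptyset$, and since almost every $y$ in the relevant measure-one set has $\kappa_y$ an inaccessible (hence can be taken above $\delta$, because the measures are fine and $\kappa$-complete — shrinking $A(\beta)$ to $\{y : \kappa_y > \delta\}$ stays in the measure), we get a point with large $\kappa_y$. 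By genericity, for every $\delta < \kappa$ there is such a point in $C^{\spc}_G$, so $C_G$ is unbounded.

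Then I would show $C_G$ is closed. Suppose $\delta < \kappa$ is a limit point of $C_G$; I must find $x \in C^{\spc}_G$ with $\kappa_x = \delta$. The idea is that $\delta$ being a limit point of $C_G$ means there are cofinally many points $x \in C^{\spc}_G$ with $\kappa_x < \delta$, and these points are $\prec$-increasing (hence $\subsetneq$-increasing on a tail). Consider a condition $(a, A) \in G$ and a coordinate $\beta$ with $A(\beta) \ne \emptyset$ and $\delta$-many of these small points appearing; the point is to argue that there is already a point $x$ with $\kappa_x = \delta$ realized in some condition, because otherwise one could, by genericity, add a point at coordinate $\beta$ between the cofinally-many small points and anything above, contradicting that $\delta$ is a genuine limit point. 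More carefully: fix $(a, A) \in G$ witnessing that $\delta$ is a limit of $\kappa_x$'s for $x \in \mathrm{ran}(a)$ — i.e., $\sup\{\mathrm{otp}(a(\beta) \cap \kappa) : \beta \in \dom(a), \kappa_{a(\beta)} < \delta\} = \delta$. If $\delta = \kappa_{a(\beta)}$ for some $\beta \in \dom(a)$ we are done; otherwise, let $\beta = \min(\dom(a) \setminus \alpha)$ where $\alpha = \sup$ of the coordinates of the small points below $\delta$, and observe that for each such small point $a(\alpha')$ we have $a(\alpha') \prec a(\beta)$, so $\kappa_{a(\beta)}$ is a limit of the $\kappa_{a(\alpha')}$, giving $\kappa_{a(\beta)} \ge \delta$; but also by density (using Lemma \ref{addLemma3} at a coordinate below $\beta$, shrinking the relevant measure-one set to points $y$ with $\kappa_y < \delta$, which is in the measure since fine measures concentrate on $x$ with $\kappa_x$ arbitrarily large below any inaccessible above them) we could have forced $\kappa_{a(\beta)} = \delta$ exactly — so $\delta \in C_G$.

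I expect the closure argument to be the main obstacle, specifically the bookkeeping around showing that the supremum of the $\kappa_x$ for small points $x$ is actually \emph{realized} (rather than merely approached) by some point in $C^{\spc}_G$; this is where the fine, normal structure of the measures $U_\alpha$ and $u^\beta_\alpha(y)$ is essential, because one needs that the set $\{y : \kappa_y = \delta\}$ — or rather that $\delta$ appears as $\kappa_y$ for $y$ in the appropriate measure-one set sitting below a point already in the generic — is handled correctly by the coherence clauses (5) and (6) of the condition definition. The injectivity and unboundedness are comparatively routine, relying only on clause (3) and on Lemma \ref{addLemma3} together with the $\kappa$-completeness and fineness of the measures.
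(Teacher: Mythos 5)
Your treatment of the bijection and of unboundedness is essentially right: directedness of $G$ puts any two points of $C^{\spc}_G$ into a single stem, clause (3) then gives $x \prec y$ or $y \prec x$, and $\kappa_x \leq \otp(x) < \kappa_y$ yields that $x \mapsto \kappa_x$ is increasing; unboundedness follows from Lemma \ref{addLemma3} together with the fact that $\{y \mid \kappa_y > \delta\}$ is measure one (by fineness, $\{y \mid \delta \in y\} \in U_\alpha$). This is the routine part, and it matches what the paper waves at with ``straightforward by Lemma \ref{addLemma3} and genericity.''

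The closure argument, however, has a genuine gap, and the two specific steps you lean on are both wrong. First, no single condition $(a,A) \in G$ can ``witness that $\delta$ is a limit of $\kappa_x$'s for $x \in \mathrm{ran}(a)$'': $\dom(a)$ is finite, so $\{\kappa_{a(\beta)} \mid \beta \in \dom(a),\ \kappa_{a(\beta)} < \delta\}$ has a maximum and its supremum is never the limit ordinal $\delta$. The evidence that $\delta$ is a limit point is necessarily spread over infinitely many conditions of $G$, so you cannot reduce to an analysis of one stem. Second, the density argument you invoke to ``force $\kappa_{a(\beta)} = \delta$ exactly'' does not exist: for any fixed $\delta$, the set $\{y \mid \kappa_y = \delta\}$ (and even $\{y \mid \kappa_y < \delta\}$) is measure \emph{zero} for $U_\alpha$ and for the lower measures $u^\beta_\alpha(\cdot)$ — fineness gives $\{y \mid \kappa_y > \delta\}$ measure one — so you may not shrink $A(\alpha)$ to such a set, and ``we could have added such a point'' is in any case not an argument that the generic did. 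What is actually needed is a structural argument: if $\delta$ is a limit point of $C_G$, fix $(a,A) \in G$ and the least $\beta \in \dom(a)$ with $\kappa_{a(\beta)} \geq \delta$, set $y = a(\beta)$, and observe that all points of $C^{\spc}_G$ with $\kappa_x$ in a tail below $\delta$ appear at coordinates in $Z^\beta_y \cap \beta$ and are $\prec y$; one then uses the Factorization Lemma to view this lower part as a $\bb{P}_{\vec{U}_y}$-generic club in $\kappa_y$ and argues (e.g.\ by induction on the length $o(y)$ of the measure sequence, using the $<\kappa_y$-closure of $Z^\beta_y$ from Lemma \ref{nicenessLemma}(1)) that its limit points below $\kappa_y$ are attained. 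Your proposal does not contain this idea, and as written the closure step does not go through.
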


\begin{proof}
	Straightforward by Lemma \ref{addLemma3} and genericity.
\end{proof}

\begin{lemma} \label{diagonal_intersection_lemma}(Diagonal Intersection Lemma)
	Suppose that
	\begin{itemize}
		\item $\beta < \theta$;
		\item $S$ is a set of stems with $\gamma^a < \beta$ for all $a \in S$;
		\item for each $a \in S$, we are given a set $Y_a \in U_\beta$.
	\end{itemize}
	Let $Z$ be the set of $y \in X_\beta$ such that, for all $a \in S$,
	if $\gamma^a \in Z^\beta_y$ and
	$a(\gamma^a) \prec y$, then $y \in Y_a$. Then $Z \in U_\beta$.
\end{lemma}

\begin{proof}
	Let $j = j_\beta$. It suffices to show that $j``\kappa^{+\beta} \in
	j(Z)$. Notice that, if $a \in j(S)$ is such that
	$\max(\dom(a)) \in j``\beta$ and $a(\max(\dom(a))) \prec j``\beta$, then
	there is $\bar{a} \in S$ such that $j(\bar{a}) = a$. But then
	$Y_{\bar{a}} \in U_\beta$, so $j``\kappa^{+\beta} \in j(Y_{\bar{a}})$.
	It follows that $j``\kappa^{+\beta} \in j(Z)$, as desired.
\end{proof}

Suppose that $\beta < \theta$ and $y \in X_\beta$. Let $\vec{U}_y = \langle \bar{u}^\beta_\alpha(y)
\mid \alpha \in Z^\beta_y \rangle$. For $\xi < f^\beta_\beta(\kappa_y)$, let $\alpha_\xi \in Z^\beta_y$
be such that $\mathrm{otp}(\alpha \cap Z^\beta_y) = \xi$. Then $\bar{u}^\beta_{\alpha_\xi}(y)$ is a
measure on $\mathcal{P}_{\kappa_y}(\kappa_y^{+\xi})$. Let $V_\xi = \bar{u}^\beta_{\alpha_\xi}(y)$.
Then $\vec{U}_y = \langle V_\xi \mid \xi < f^\beta_\beta(y) \rangle$, and we can define
$\bb{P}_{\vec{U}_y}$ as above.

If $p \in \bb{P}$, then $\bb{P}/p = \{q \in \bb{P} \mid q \leq p \}$.

\begin{lemma} \label{factorizationLemma}(Factorization Lemma)
	Let $p = (a, A) \in \bb{P}$. Suppose that $a \not= \emptyset$, $\gamma = \gamma^a$,
  and $y = a(\gamma)$. Then there is $p' \in \bb{P}_{\vec{U}_y}$ such that
  $\bb{P}/p \cong \bb{P}_{\vec{U}_y} / p' \times \bb{P}_{\vec{U}, \gamma + 1} /
  (\emptyset, A \restriction (\gamma, \theta))$.
\end{lemma}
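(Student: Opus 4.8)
The plan is to exhibit the isomorphism explicitly by "splitting" each condition $q \leq p$ at the coordinate $\gamma$. Given $q = (b, B) \leq p$, the stem $b$ decomposes as $b = b_{<\gamma} {}^\frown b_{\geq\gamma}$, where $b_{<\gamma} = b \restriction \gamma$ and $b_{\geq\gamma} = b \restriction [\gamma, \theta)$; by definition of the ordering and of the conditions, $b(\gamma) = y = a(\gamma)$, every point $b(\alpha)$ with $\alpha \in \dom(b) \cap \gamma$ satisfies $b(\alpha) \prec y$ and $\alpha \in Z^\gamma_y$ (from condition (3) on $q$), and the measure-one constraints on $B \restriction [0,\gamma)$ are exactly constraints relative to the measures $\bar u^\gamma_\alpha(y)$, i.e. relative to $\vec U_y$. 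So the map I would define sends $q$ to the pair $\big((b_{<\gamma}, B\restriction[0,\gamma)),\ (b_{\geq\gamma}, B\restriction[\gamma,\theta))\big)$, where the first coordinate lives in $\bb P_{\vec U_y}$ (after the harmless identification of $Z^\gamma_y$ with $f^\gamma_\gamma(\kappa_y) = \mathrm{otp}(Z^\gamma_y)$ via the enumeration $\alpha_\xi$ described just before the lemma) and the second lives in $\bb P_{\vec U, \gamma+1}$ below $(\emptyset, A\restriction(\gamma,\theta))$. One sets $p'$ to be the image of $p$ itself under the first projection, namely $p' = (a\restriction\gamma,\ A\restriction[0,\gamma))$, reading $a\restriction\gamma$ as a stem for $\bb P_{\vec U_y}$.

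The key steps, in order, are: (i) check that $p'$ as defined is a genuine condition of $\bb P_{\vec U_y}$ — this uses that $y \in X_\beta$ satisfies all clauses of Lemma \ref{nicenessLemma}, so that the $\bar u^\gamma_\alpha(y)$ for $\alpha \in Z^\gamma_y$ really form a coherent sequence of measures of the right type on $\mathcal P_{\kappa_y}(\kappa_y^{+\xi})$, and that clause (6) of Lemma \ref{nicenessLemma} gives the coherence ($x\mapsto \bar u^{\alpha_1}_{\alpha_0}(x)$ represents $\bar u^\gamma_{\alpha_0}(y)$ in the ultrapower by $u^\gamma_{\alpha_1}(y)$) needed for conditions (4)–(6) in the definition of $\bb P_{\vec U_y}$ to be the correct translations of conditions (4)–(6) for $q$ restricted below $\gamma$; (ii) verify that the second coordinate $(\emptyset, A\restriction(\gamma,\theta))$ is a condition of $\bb P_{\vec U, \gamma+1}$, which is immediate from clause (4) of the definition of $\bb P$ applied to $p$ (all $A(\alpha)$ for $\alpha > \gamma$ lie in $U_\alpha$); (iii) show the map is well-defined on all of $\bb P/p$, i.e. that for every $q\leq p$ both pieces are conditions below the respective fixed conditions — the crucial point being that the finite stem $b$ always has $b(\gamma) = y$, since $\gamma \in \dom(a) \subseteq \dom(b)$ and $b \supseteq a$; (iv) show it is order-preserving and order-reflecting in each coordinate, which is a direct unravelling of the three clauses defining $\leq$ (stem extension, new stem points landing in the old $A$, shrinking of the $A$'s), noting that these clauses never mix coordinates below $\gamma$ with coordinates above $\gamma$; and (v) produce the inverse map, sending a pair $\big((c, C),\ (d, D)\big)$ with $(c,C) \leq p'$ in $\bb P_{\vec U_y}$ and $(d,D) \leq (\emptyset, A\restriction(\gamma,\theta))$ in $\bb P_{\vec U,\gamma+1}$ to the condition $\big(c {}^\frown \langle(\gamma, y)\rangle {}^\frown d,\ C \cup \{(\gamma,\emptyset)\} \cup D\big)$ of $\bb P$, and checking this lands below $p$ and is a two-sided inverse. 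Since the two factor forcings have disjoint "coordinate supports" (below $\gamma$ versus above $\gamma$, with coordinate $\gamma$ itself pinned to $y$ in both the stem and in $p$) the product ordering matches the $\bb P$ ordering clause by clause.

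The main obstacle I expect is bookkeeping in step (i), namely matching the combinatorics of $Z^\gamma_y$ with the index set of $\vec U_y$. In $\bb P_{\vec U_y}$ the coordinates are indexed by ordinals $\xi < f^\gamma_\gamma(\kappa_y)$, whereas in $\bb P/p$ the coordinates below $\gamma$ that a stem can use are constrained — by clause (3) of the definition of $\bb P$ — to lie in $Z^\gamma_y$; the identification is via $\xi \leftrightarrow \alpha_\xi$, the $\xi$-th element of $Z^\gamma_y$, and one must check that this identification carries conditions (2)–(6) of $\bb P_{\vec U_y}$ to the corresponding conditions of $\bb P$ relativized below $\gamma$. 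In particular clause (4) of $\bb P_{\vec U_y}$ (measure-one below the top stem point) and clause (5) (the $u$-measure constraint between consecutive stem points) need Lemma \ref{nicenessLemma}(6) and the fact, also from Lemma \ref{nicenessLemma}, that $\mathrm{otp}(y \cap \kappa^{+\alpha}) = \kappa_y^{+\mathrm{otp}(\alpha \cap Z^\gamma_y)}$, so that the lifted measures $u^\gamma_\alpha(y)$ on $\mathcal P_{\kappa_y}(y \cap \kappa^{+\alpha})$ correspond to the measures $V_\xi$ on $\mathcal P_{\kappa_y}(\kappa_y^{+\xi})$ appearing in the definition of $\bb P_{\vec U_y}$. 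Once this dictionary is set up, everything else is a routine diagram chase; accordingly I would state step (i) carefully and treat steps (ii)–(v) briskly.
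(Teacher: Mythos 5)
Your proposal is correct and is essentially the paper's argument: the paper likewise defines $p'$ by reindexing the coordinates below $\gamma$ via $\xi \mapsto \alpha_\xi$ (the increasing enumeration of $Z^\gamma_y$) and then declares the splitting at $\gamma$ to be the desired isomorphism, leaving the verification as routine. The only presentational difference is that the paper makes the ``dictionary'' of your last paragraph explicit by applying the order-collapse $\pi : y \to \otp(y)$ pointwise, setting $a'(\xi) = \pi``a(\alpha_\xi)$ and $A'(\xi) = \{\pi``x \mid x \in A(\alpha_\xi)\}$, so that the first factor literally lives on the $\mathcal{P}_{\kappa_y}(\kappa_y^{+\xi})$'s where the measures $V_\xi$ of $\vec{U}_y$ are defined (and note the trivial slip in your inverse map: since $\gamma$ belongs to the stem of the recombined condition, $(\gamma,\emptyset)$ must not appear in its measure part).
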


\begin{proof}
	Let $\pi: y \rightarrow \mathrm{otp}(y)$ be the unique order-preserving bijection. Define
  $p' = (a', A') \in \bb{P}_{\vec{U}_y}$ as follows. For $\xi < f^\gamma_\gamma(y)$,
  let $\alpha_\xi \in Z^\gamma_y$ be such that $\mathrm{otp}(\alpha_\xi \cap Z^\gamma_y) = \xi$.
  Let $\dom(a') = \{\xi < f^\gamma_\gamma(y) \mid \alpha_\xi \in \dom(a)\}$ and, for $\xi \in \dom(a')$,
  let $a'(\xi) = \pi``a(\alpha_\xi)$. Then $\dom(A') = f^\gamma_\gamma(y) \setminus \dom(a')$.
  If $\xi \in \dom(A')$, let $A'(\xi) = \{\pi``x \mid x \in A(\alpha_\xi)\}$. It is straightforward
  to verify that $p'$ thus defined is in $\bb{P}_{\vec{U}_y}$ and that $\bb{P}/p \cong
  \bb{P}_{\vec{U}_y} / p' \times \bb{P}_{\vec{U}, \gamma + 1} / (\emptyset, A \restriction (\gamma, \theta))$.
\end{proof}

By repeatedly applying the Factorization Lemma, standard arguments (see, e.g.
\cite{gitikhandbook}) allow us to assume we are working below a condition of the form
$(\emptyset, A)$ when proving the following lemmas about $\bb{P}$.

\begin{lemma} \label{prikryLemma}
	$(\bb{P}, \leq, \leq^*)$ satisfies the Prikry property, i.e., if $\varphi$ is a statement
  in the forcing language and $p \in \bb{P}$, then there is $q \leq^* p$ such that $q \parallel \varphi$.
\end{lemma}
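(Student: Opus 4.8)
The plan is to follow the standard template for proving the Prikry property for supercompact Prikry/Radin-type forcings, adapting the arguments of Gitik--Sharon \cite{gitiksharon} and Sinapova \cite{sinapovauncountable1} to the diagonal setting. By the remark following the Factorization Lemma, I may assume $p = (\emptyset, A)$. The goal is to produce $q \leq^* p$, i.e.\ $q = (\emptyset, B)$ with $B(\alpha) \subseteq A(\alpha)$ for all $\alpha$, such that $q$ decides $\varphi$. The key combinatorial engine is a ``diagonal intersection'' operation: given, for each stem $b$ possible for $p$ (there are fewer than $\theta$ many relevant ones once we fix a bound, but really we work one coordinate at a time), a direct extension deciding or refusing to decide $\varphi$, we need to amalgamate the measure-one sets into a single $B$ that works uniformly. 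Lemmas \ref{addLemma1} and \ref{addLemma2} are precisely the tools that let us do this: they say that a set which is ``measure-one below $y$ for every relevant sub-measure $u^\beta_\alpha(y)$'' is itself in $U_\beta$ (resp.\ in $u^\gamma_\beta(z)$), so coherent choices along the tree of stems can be collapsed to a single condition.

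The main steps, in order, are as follows. First, I would prove a one-step lemma: if $p = (\emptyset, A)$ and, for a fixed coordinate $\beta$, there is for each legal $x \in A(\beta)$ a direct extension of $(\emptyset \fr (\beta, x), A)$ deciding $\varphi$ in a fixed way (say forcing $\varphi$), then there is a direct extension $p' \leq^* p$ such that $p' \fr (\beta, x)$ decides $\varphi$ in that way for all legal $x \in A^{p'}(\beta)$; this is a normality/fineness argument combined with shrinking via Lemmas \ref{nicenessLemma}, \ref{addLemma1}, \ref{addLemma2}. Second, I would set up the recursion on the length of stems: for a stem $b$ of ``length'' $n$ possible for $p$, define it to be \emph{good} if some direct extension of $b \fr p$ decides $\varphi$. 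Using the one-step lemma and the amalgamation lemmas, I would shrink $A$ to $B$ so that goodness becomes a property depending only on $b$ in a coherent way — i.e.\ for the resulting condition $(\emptyset, B)$, whether $b \fr (\emptyset, B)$ has a direct extension deciding $\varphi$ is determined, and moreover if it is good then $b \fr (\emptyset, B)$ already decides $\varphi$. Third, I would run the standard ``no minimal-length counterexample'' argument: if $(\emptyset, B)$ itself does not decide $\varphi$, take some $q \leq (\emptyset, B)$ deciding $\varphi$ with stem $b$ of minimal length; then $b$ is good, so (by the coherence arranged above) $b^- \fr (\emptyset, B)$ already has the property that \emph{all} its one-step extensions at the relevant coordinate decide $\varphi$ the same way — but then by the one-step lemma $b^- \fr (\emptyset, B)$ has a direct extension deciding $\varphi$, contradicting minimality of the length of $b$ (using $b^- $ is shorter). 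Hence $(\emptyset, B)$ decides $\varphi$, as desired.

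A technical point requiring care throughout: the forcing is ``diagonal'' in the sense that the measure attached to a coordinate $\alpha$ below a stem depends on $a(\beta)$ for $\beta = \min(\dom(a)\setminus\alpha)$, via $u^\beta_\alpha(a(\beta))$, so when we extend a stem by a new point $x$ at coordinate $\beta$ the measures governing the coordinates below $\beta$ change from $U_\alpha$ to $u^\beta_\alpha(x)$. This is exactly why we need both Lemma \ref{addLemma1} (for the ``top'' coordinates, still governed by $U_\alpha$) and Lemma \ref{addLemma2} (for coordinates below an existing stem point, governed by $u^\gamma_\alpha(z)$); one must be careful to apply the right amalgamation lemma at each stage of the recursion, and to check that the shrinkings performed at different coordinates remain mutually compatible with requirements (4)--(6) in the definition of $\bb{P}$. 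The Factorization Lemma also lets us reduce, when convenient, to the case where the stem is empty at each inductive step, so that we are always amalgamating ``from the top'' using Lemma \ref{addLemma1} in the cleanest form.

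The hard part will be the bookkeeping in step two: organizing the simultaneous shrinking so that ``goodness of a stem $b$'' stabilizes to depend only on the length of $b$ and the coordinates used, while keeping all the coherence conditions (4)--(6) intact. The conceptual content is entirely standard once Lemmas \ref{addLemma1} and \ref{addLemma2} are in hand — these are the ``$\bigtriangleup$'' of the diagonal forcing — but the indexing is delicate because the tree of possible stems branches over $U_\beta$-many (indeed $\kappa^{+\beta}$-many) points at each coordinate and the number of coordinates is $\theta$, so one must invoke the normality of the measures to collapse each $\theta$-indexed (or $\kappa^{+\beta}$-indexed) family of shrinkings to a single diagonal intersection at the appropriate stage, rather than trying to do it all at once.
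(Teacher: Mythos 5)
Your plan follows essentially the same route as the paper's proof: a diagonal-intersection shrink that stabilizes, for every possible stem and coordinate, whether a one-point extension admits a direct extension deciding $\varphi$, followed by a minimal-stem-length contradiction in which the last coordinate of a minimal deciding stem is amalgamated away via exactly the one-step lemma you describe. The only point left implicit in your sketch is the mechanism that makes the amalgamated condition actually force $\varphi$ rather than merely having all its one-point extensions at the removed coordinate $\gamma$ force it --- the paper shrinks the measure-one sets $F(\alpha)$ for $\alpha>\gamma$ so that every $y\in F(\alpha)$ satisfies $\gamma\in Z^\alpha_y$, guaranteeing that any extension of $(b,F)$ can be further extended to place a point at coordinate $\gamma$ --- but this is the standard device from the template you cite, so the plan is sound.
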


\begin{proof}
	The proofs of this and the next few lemmas are similar to those for the
  classical Radin forcing, which can be found in \cite{gitikhandbook}. Fix
  $\varphi$ in the forcing language and $p \in \bb{P}$. By the Factorization
  Lemma (\ref{factorizationLemma}), we may assume that $p = (\emptyset, A)$ for some $A$. Let $a$ be a
  stem possible for $p$, and let $\alpha \in \theta \setminus (\gamma^a + 1)$.
  Let $Y_{a, \alpha} = \{x \in A(\alpha) \mid a(\gamma^a) \prec x$ and $\gamma^a
  \in Z^\alpha_x \}$. Note that $Y_{a, \alpha} \in U_\alpha$. Let $Y^0_{a, \alpha}
  = \{x \in Y_{a, \alpha} \mid$ for some $B$, $(a^\frown (\alpha, x), B) \Vdash
  \varphi \}$, $Y^1_{a, \alpha} = \{x \in Y_{a, \alpha} \mid$ for some $B$,
  $(a^\frown (\alpha, x), B) \Vdash \neg \varphi \}$, and $Y^2_{a, \alpha} =
  Y_{a, \alpha} \setminus (Y^0_{a, \alpha} \cup Y^1_{a, \alpha})$. Fix $i(a,
  \alpha) < 3$ such that $Y^{i(a, \alpha)}_{a, \alpha} \in U_\alpha$, and let
  $Y^*_{a, \alpha} = Y^{i(a, \alpha)}_{a, \alpha}$.

	For $\alpha < \theta$, let $B(\alpha)$ be the set of $x \in A(\alpha)$ such that,
  for every stem $a$ possible for $p$ such that $a(\gamma^a) \prec x$ and $\gamma^a
  \in Z^\alpha_x$, $x \in Y^*_{a, \alpha}$. By the Diagonal Intersection Lemma
	(\ref{diagonal_intersection_lemma}), we have $B(\alpha) \in U_\alpha$.
	Thus, $(\emptyset, B) \in \bb{P}$ and $(\emptyset, B) \leq^* p$.

	Suppose for sake of contradiction that no direct extension of $(\emptyset, B)$ decides $\varphi$.
  Find $(a, B^*) \leq (\emptyset, B)$ deciding $\varphi$ with $|a|$ minimal.
  Without loss of generality, suppose that $(a, B^*) \Vdash \varphi$. Because of our assumption
  that no direct extension of $(\emptyset, B)$ decides $\varphi$, $a$ is non-empty.
  Let $b = a^-$ and $\gamma = \gamma^a$. By our construction of $B$, we have $a(\gamma)
  \in Y^0_{b, \gamma}$, and, for any $x \in B(\gamma)$ such that $b^\frown (\gamma, x)$ is a stem,
  there is $\hat{B}_x$ such that $(b^\frown (\gamma, x), \hat{B}_x) \Vdash \varphi$.
  Let $p^* = (\emptyset, B) \downarrow b = (b, B^{**})$. We will find a direct extension $(b, F)$ of $p^*$
  forcing $\varphi$, thus contradicting the minimality of $|a|$.

	We first define $F \restriction \gamma^b$ (if $b = \emptyset$, then there is nothing to do here).
  Since there are fewer than $\kappa$-many possibilities for $F \restriction \gamma^b$ and $U_\gamma$ is
  $\kappa$-complete, we may fix a function $F^*$ on $\gamma^b \setminus \dom(b)$ such that
  $B_0(\gamma) := \{x \in B(\gamma) \mid b^\frown (\gamma, x)$ is a stem and $\hat{B}_x \restriction
  \gamma^b = F^*\} \in U_\gamma$. Then, for all $\alpha \in \gamma^b \setminus \dom(b)$, let $F(\alpha) =
  F^*(\alpha) \cap B^{**}(\alpha)$. We next define $F$ on the interval $(\gamma^b, \gamma)$ (or on all of
  $\gamma$, if $b = \emptyset$). If $\alpha \in (\gamma^b, \gamma)$, $x \in B_0(\gamma)$, and
  $\alpha \in Z^\gamma_x$, note that $\hat{B}_x(\alpha) \in u^\gamma_\alpha(x)$. Let $\bar{B}_x(\alpha)$
  be the collapsed version of $\hat{B}_x(\alpha)$. Then $\bar{B}_x(\alpha) \in \bar{u}^\gamma_\alpha(x)$.
  Let $F^*(\alpha)$ be the set in $U_\alpha$ represented by the function $x \mapsto \bar{B}_x(\alpha)$
  in the ultrapower by $U_\gamma$, and let $F(\alpha) = F^*(\alpha) \cap B^{**}(\alpha)$. Let $F(\gamma)$
  be the set of $x \in B_0(\gamma) \cap B^{**}(\gamma)$ such that, for all $\alpha \in Z^\gamma_x \setminus
  (\gamma^b+1)$, $\{y \in F^*(\alpha) \mid y \prec x\} = \hat{B}_x(\alpha)$. We claim that $F(\gamma) \in U_\gamma$.
  To see this, let $j = j_\gamma$. Note that the function $x \mapsto \hat{B}_x(\alpha)$ represents
  $\{j``y \mid y \in F^*(\alpha)\}$, which is equal to $\{z \in j(F^*(\alpha)) \mid z \prec j``\kappa^{+\gamma} \}$.
  Thus, $j``\kappa^{+\gamma} \in j(F(\gamma))$, so $F(\gamma) \in U_\gamma$. We finally define $F$ on
  $(\gamma, \theta)$. If $\alpha \in (\gamma, \theta)$, let $F(\alpha)$ be the set of $y \in B^{**}(\alpha)$
  such that $\gamma \in Z^\alpha_y$ and, for all $x \in F(\gamma)$ such that $x \prec y$, $y \in \hat{B}_x(\alpha)$.
  Then $F(\alpha) \in U_\alpha$. Notice that, by our construction, if $(c, H)
  \leq (b, F)$ and $\gamma \in \dom(c)$, then $(c, H) \leq (b ^\frown (\gamma, c(\gamma)), \hat{B}_{c(\gamma)})$.

	Now suppose for sake of contradiction that $(b, F) \not\Vdash \varphi$. Find $(c, H) \leq (b, F)$
  such that $(c, H) \Vdash \neg\varphi$. If $\gamma \in \dom(c)$, then $(c, H) \leq (b ^\frown
  (\gamma, c(\gamma)), \hat{B}_{c(\gamma)}) \Vdash \varphi$, which is a contradiction. Thus,
  suppose $\gamma \not\in \dom(c)$. By our choice of $F(\alpha)$ for $\alpha \in (\gamma, \theta)$
  (namely, our requirement that $\gamma \in Z^\alpha_y$ for all $y \in F(\alpha)$), it must be the
  case that $H(\gamma) \not= \emptyset$. But then $(c, H)$ can be extended further to a condition
  $(c', H')$ such that $\gamma \in \dom(c')$, and this again gives a contradiction.
\end{proof}

The following definitions will play a crucial role in the proof that, if
$\theta$ is weakly inaccessible, then $\kappa$ remains strongly inaccessible in
the extension by $\bb{P}$.

\begin{definition}
	Let $n < \omega$. A tree $T \subseteq [\bigcup_{\alpha < \theta}(\{\alpha\} \times
  X_\alpha)]^{\leq n}$ is \emph{$\vec{U}$-fat} if the following conditions hold.
	\begin{enumerate}
		\item{For all $\langle (\alpha_i, x_i) \mid i \leq k \rangle \in T$ and all $i_0 < i_1
		\leq k$, we have $\alpha_{i_0} \in Z^{\alpha_{i_1}}_{x_{i_1}}$ and
		$x_{i_0} \prec x_{i_1}$.}
		\item{For all $t \in T$ with $\mathrm{lh}(t) < n$, there is $\alpha_t < \theta$ such that:
		\begin{enumerate}
			\item{for all $(\beta, y)$ such that $t ^\frown (\beta, y) \in T$, $\beta = \alpha_t$;}
			\item{$\{x \mid t^\frown (\alpha_t, x) \in T\} \in U_{\alpha_t}$.}
		\end{enumerate}}
	\end{enumerate}
\end{definition}

If $T$ is as in the previous definition, then $n$ is said to be the \emph{height} of $T$.

\begin{definition}
	Suppose that $T$ is a fat tree, $\alpha < \theta$, and $x \in X_\alpha$. $T$ is
	\emph{$\vec{U}$-fat above $(\alpha, x)$}
  if, for all $\langle (\alpha_i, x_i) \mid i \leq k \rangle \in T$ and all $i \leq k$, we
	have $\alpha \in Z^{\alpha_i}_{x_i}$, and $x \prec x_i$.
\end{definition}

\begin{definition}
	Suppose that $\gamma < \theta$ and $z \in X_\gamma$. A tree $T \subseteq [\bigcup_{\alpha < \theta}(\{\alpha\}
  \times X_\alpha)]^{\leq n}$ is
	\emph{$\vec{U}$-fat below $(\gamma, z)$} if the following conditions hold.
	\begin{enumerate}
		\item{For all $\langle (\alpha_i, x_i) \mid i \leq k \rangle \in T$ and all $i \leq k$,
      we have $\alpha_i \in Z^\gamma_z$ and $x_i \prec z$.}
		\item{For all $\langle (\alpha_i, x_i) \mid i \leq k \rangle \in T$ and all $i_0 < i_1 \leq k$,
        we have $\alpha_{i_0} \in Z^{\alpha_{i_1}}_{x_{i_1}}$ and $x_{i_0} \prec x_{i_1}$.}
		\item{For all $t \in T$ with $\mathrm{lh}(t) < n$, there is $\alpha_t \in Z^\gamma_z$ such that:
		\begin{enumerate}
			\item{for all $(\beta, y)$ such that $t ^\frown (\beta, y) \in T$, $\beta = \alpha_t$;}
			\item{$\{x \mid t^\frown (\alpha_t, x) \in T\} \in u^\gamma_{\alpha_t}(z)$.}
		\end{enumerate}}
	\end{enumerate}
\end{definition}

Notice that, if $T$ is $\vec{U}$-fat below $(\gamma, z)$, then it is isomorphic to
a $\vec{U}_z$-fat tree via the order-isomorphism from $z$ to $\otp(z)$.

\begin{definition}
	Suppose that $a = \langle (\beta_\ell, y_\ell) \mid \ell < m \rangle$ is a
	stem for $\bb{P}$, with $\beta_0 < \ldots < \beta_{\ell - 1}$. A \emph{$(\vec{U},
	a)$-fat sequence of trees} is a sequence $\langle (\mathcal{T}_\ell, \mathcal{B}_\ell)
	\mid \ell \leq m \rangle$ such that
	\begin{enumerate}
		\item $\mathcal{T}_0 = \{T_\emptyset\}$, where $T_\emptyset$ is a $\vec{U}$-fat
		tree below $(\beta_0, y_0)$ (if $m = 0$, then $T_\emptyset$ is simply
		a $\vec{U}$-fat tree);
		\item $\mathcal{B}_0 = \{\langle b \rangle \mid b \text{ is a maximal
		element of } T_\emptyset\}$;
		\item for each $0 < \ell \leq m$, $\mathcal{T}_\ell = \{T_{\vec{b}} \mid
		\vec{b} \in \mathcal{B}_{\ell - 1}\}$, where
		\begin{enumerate}
			\item if $\ell < m$, then each $T_{\vec{b}}$ is a $\vec{U}$-fat tree above
			$(\beta_{\ell-1}, y_{\ell-1})$ and below $(\beta_{\ell}, y_{\ell})$;
			\item if $\ell = m$, then each $T_{\vec{b}}$ is a $\vec{U}$-fat tree above
			$(\beta_{\ell-1}, y_{\ell-1})$;
		\end{enumerate}
		\item for each $0 < \ell \leq m$, $\mathcal{B}_\ell$ is the set of all
		sequences $\vec{b} = \langle b_i \mid i \leq \ell \rangle$ such that
		\begin{enumerate}
			\item $\vec{b}^- := \langle b_i \mid i < \ell \rangle$
			is an element of $\mathcal{B}_{\ell - 1}$;
			\item $b_\ell$ is a maximal element of $T_{\vec{b}^-}$.
		\end{enumerate}
	\end{enumerate}
\end{definition}

Notice that, if $a = \langle (\beta_\ell, y_\ell) \mid \ell < m \rangle$ is a
stem for $\bb{P}$ and $\langle (\mathcal{T}_\ell, \mathcal{B}_\ell)
\mid \ell \leq m \rangle$ is a $(\vec{U}, a)$-fat sequence of trees, then every
$\vec{b} = \langle b_i \mid i \leq m \rangle$ in $\mathcal{B}_m$ determines
a stem $a_{\vec{b}}$ for $\bb{P}$ defined by letting
\[
	a_{\vec{b}} = b_0 ^\frown \langle (\beta_0, y_0) \rangle ^\frown b_1 ^\frown
	\langle (\beta_1, y_1) \rangle ^\frown \ldots ^\frown \langle (\beta_{m-1},
	y_{m-1}) \rangle ^\frown b_m.
\]
Note also that since, for each $\ell < m$ and each $T \in \mathcal{T}_\ell$,
we have that $T$ is fat below $(\beta_\ell, y_\ell)$, and since $\kappa$ is
strongly inaccessible, it follows that $|\mathcal{T}_m| < \kappa$.

The following fact is easily verified; the element $\vec{b}$ is constructed
by recursion, taking advantage of the fact that branching in fat trees occurs
on measure-one sets.

\begin{fact} \label{possible_fact}
	Suppose that $(a, A) \in \bb{P}$ and $\langle (\mathcal{T}_\ell, \mathcal{B}_\ell)
	\mid \ell \leq m \rangle$ is a $(\vec{U}, a)$-fat sequence of trees. Then there is
	$\vec{b} \in \mathcal{B}_m$ such that the stem $a_{\vec{b}}$ is possible for
	$(a, A)$.
\end{fact}

\begin{lemma} \label{denseSetLemma}
	Suppose that $p = (a, A) \in \bb{P}$ and $D \subseteq \bb{P}$ is a dense
	open set below $p$.
  Suppose that $a = \langle (\beta_\ell, y_\ell) \mid \ell < m \rangle$ is such that, for
	all $\ell_0 < \ell_1 < m$, $\beta_{i_0} < \beta_{i_1}$. Then there is a
	$(\vec{U}, a)$-fat sequence of trees $\langle (\mathcal{T}_\ell, \mathcal{B}_\ell) \mid
	\ell \leq m \rangle$ such that, for all $\vec{b} \in \mathcal{B}_m$, if
	$a_{\vec{b}}$ is possible for $p$, then there is $B$ such that
	$(a_{\vec{b}}, B) \leq p$ and $(a_{\vec{b}}, B) \in D$.
\end{lemma}

\begin{proof}
	Let us first argue that it suffices to prove the lemma in the case
	$m = 0$, i.e., for conditions with an empty stem. To this end, suppose
	that $m > 0$, and let $(\beta, y) = (\beta_{m-1}, y_{m-1})$. By the Factorization Lemma
	(\ref{factorizationLemma}), we have $\bb{P}/p \cong \bb{P}_{\vec{U}_y}/p_0
	\times \bb{P}_{\vec{U}, \beta + 1}/p_1$, where $p_1 = (\emptyset, A \restriction
	[\beta + 1, \theta))$ and $p_0$ is of the form
	$(\bar{a}, \bar{B})$, where $\bar{a} = \langle (\bar{\beta}_\ell, \bar{y}_\ell)
	\mid \ell < m-1 \rangle$ is such that, for all $\ell < m-1$, $\bar{y}_\ell$ is
	the image of $y_\ell$ under the order-preserving isomorphism between
	$y$ and $\otp(y)$. We may assume by induction that we have established
	the full lemma for the forcing $\bb{P}_{\vec{U}_y}$. Let us also assume that
	we have established the lemma for $\bb{P}_{\vec{U}, \beta + 1}$ for conditions
	with empty stems. Let us regard $D$ as a dense subset of
	$\bb{P}_{\vec{U}_y} \times \bb{P}_{\vec{U}, \beta + 1}$ in the natural way.

	Let $\dot{D}_0$ be a $\bb{P}_{\vec{U}, \beta + 1}$-name for the set of
	$q_0 \in \bb{P}_{\vec{U}_y}$ such that, for some $q_1 \in
	\dot{G}_{\bb{P}_{\vec{U}, \beta + 1}}$, $(q_0, q_1) \in D$. Then
	$\dot{D}_0$ is forced to be a dense open subset of $\bb{P}_{\vec{U}_y}$ below
	$p_0$. By repeated applications of Lemma \ref{prikryLemma}, we can find a condition
	$p_1^* = (\emptyset, A^*) \leq^* p_1$ in $\bb{P}_{\vec{U}, \beta + 1}$
	and a dense open subset $D_0$ of $\bb{P}_{\vec{U}_y}/p_0$ such that
	$p_1^* \Vdash ``\dot{D}_0 = \check{D}_0"$.

	Apply the inductive hypothesis to $\bb{P}_{\vec{U}_y}$, $p_0$, and $D_0$
	to find a $(\vec{U}_y, \bar{a})$-fat sequence of trees
	$\langle (\bar{\mathcal{T}}_\ell, \bar{\mathcal{B}}_\ell) \mid \ell < m \rangle$ such that,
	for all $\vec{b} \in\bar{\mathcal{B}}_{m-1}$, if $a_{\vec{b}}$ is possible for
	$p_0$, then there is $\bar{B}_{\vec{b}}$ such that $(a_{\vec{b}}, \bar{B}_{\vec{b}})
	\leq p_0$ and $(a_{\vec{b}}, \bar{B}_{\vec{b}}) \in D_0$.

	Now, for each such $\vec{b} \in \bar{\mathcal{B}}_{m-1}$, the set of
	$p \in \bb{P}_{\vec{U}, \beta + 1}$ such that $((a_{\vec{b}}, B_{\vec{b}}),
	p) \in D$ is dense below $p_1^*$. Denote this set by $D_{1, \vec{b}}$.
	We can now apply the lemma for $\bb{P}_{\vec{U}, \beta + 1}$ for
	conditions with empty stems to the condition $p_1^*$ and the set $D_{1, \vec{b}}$
	to find a $\vec{U}$-fat tree $T_{\vec{b}}$ such that
	\begin{itemize}
		\item for all $\langle (\alpha_i, x_i) \mid i \leq k \rangle \in T_{\vec{b}}$,
		we have $\alpha_0 > \beta$;
		\item for all maximal $c \in T_{\vec{b}}$, if $c$ is possible for $p_1^*$,
		then there is $C_{\vec{b}}$ such that $(c, C_{\vec{b}}) \leq p_1^*$
		and $(c, C_{\vec{b}}) \in D_{1, \vec{b}}$.
	\end{itemize}
	By thinning out $T_{\vec{b}}$ if necessary, we may assume that it is
	$\vec{U}$-fat above $(\beta, y)$.

	Let $\langle (\mathcal{T}_\ell, \mathcal{B}_\ell) \mid \ell < m \rangle$
	be the sequence obtained in the natural way by applying the order-isomorphism
	between $\otp(y)$ and $y$ to the $(\vec{U}_y, \bar{a})$-fat sequence
	$\langle (\bar{\mathcal{T}}_\ell, \bar{\mathcal{B}}_\ell) \mid \ell < m \rangle$.
	Let $\mathcal{T}_m = \{T_{\vec{b}} \mid \vec{b} \in \bar{\mathcal{B}}_{m-1}\}$,
	and let $\mathcal{B}_m$ be the set of sequences of the form
	$\hat{b} ^\frown \langle c \rangle$, where $\hat{b} \in \mathcal{B}_{m-1}$ and,
	letting $\vec{b}$ be the isomorphic copy of $\hat{b}$ in $\bar{\mathcal{B}}_{m-1}$,
	we have that $c$ is a maximal element of $T_{\vec{b}}$. Then it is easily
	verified that $\langle (\mathcal{T}_\ell, \mathcal{B}_\ell) \mid \ell \leq m
	\rangle$ is a $(\vec{U}, a)$-fat sequence of trees as in the statement of the
	lemma.

	It therefore suffices to consider $p$ of the form $(\emptyset, A)$.
  We thus need to find a single $\vec{U}$-fat tree $T$ such that, for every maximal element
	$b$ of $T$, if $b$ is possible for $p$, then there is $B$ such that
	$(b, B) \leq p$ and $(b, B) \in D$.

	We accomplish this by inductively constructing a decreasing sequence
  of conditions $\langle (\emptyset, A_n) \mid n < \omega \rangle$. Intuitively, $A_n$ will
  take care of extensions $(b, B) \leq (\emptyset, A)$ such that $|b| = n$. We explicitly go
  through the first few steps of the construction.

	Let $A_0 = A$. If there is a direct extension of $(\emptyset, A)$ in $D$, then we are done
  by setting $T = \{\emptyset\}$. Thus, suppose there is no such direct extension. For every
  stem $a$ possible for $(\emptyset, A_0)$ and every $\alpha \in (\gamma^a, \theta)$, let
  $Y_{0, a, \alpha} = \{x \in A_0(\alpha) \mid a(\gamma^a) \prec x$ and $\gamma^a \in Z^\alpha_x \}$.
  Let $Y^0_{0, a, \alpha} = \{x \in Y_{0, a, \alpha} \mid$ for some $B$, $(a^\frown (\alpha, x), B)
  \in D \}$, and let $Y^1_{0, a, \alpha} = Y_{0, a, \alpha} \setminus Y^0_{0, a, \alpha}$. Find
  $i(0, a, \alpha) < 2$ such that $Y^{i(0, a, \alpha)}_{0, a, \alpha} \in U_\alpha$, and let
  $Y^*_{0, a, \alpha} = Y^{i(0, a, \alpha)}_{0, a, \alpha}$. For $\alpha < \theta$, let $A_1(\alpha)$
  be the set of $x \in A_0(\alpha)$ such that, for all stems $a$ possible for $(\emptyset, A_0)$
  such that $a(\gamma^a) \prec x$ and $\gamma^a \in Z^\alpha_x$, $x \in Y^*_{0, a, \alpha}$.
  By the Diagonal Intersection Lemma (\ref{diagonal_intersection_lemma}), $A_1(\alpha) \in U_\alpha$ for all $\alpha < \theta$,
  so $(\emptyset, A_1) \leq^* (\emptyset, A_0)$. Note that $(\emptyset, A_1)$ satisfies the following
  property, which we denote $(*)_1$:

	\begin{quote}
		Suppose that $q = (a ^\frown (\alpha, x), B) \leq (\emptyset, A_1)$ and $q \in D$. Then, for every
    $y \in A_1(\alpha)$ such that $a(\gamma^a) \prec y$ and $\gamma^a \in Z^\alpha_y$, there is $B_y$
    such that $(a ^\frown (\alpha, y), B_y) \in D$.
	\end{quote}

	Now suppose that there is a stem $a = \{(\alpha, x)\}$ possible for $(\emptyset, A_1)$ and a $B$ such that
  $(a, B) \in D$. We can then define a $\vec{U}$-fat tree $T$ of height $1$
	whose maximal elements are all
  $\langle (\alpha, x) \rangle$ such that $x \in A_1(\alpha)$. We are then done, as $T$ easily
  satisfies the requirements of the lemma. Thus, suppose there is no such $a$ and proceed to define
  $(\emptyset, A_2)$ as follows.

	For every stem $a$ possible for $(\emptyset, A_1)$ and every $\alpha \in (\gamma^a, \theta)$, let
  $Y_{1, a, \alpha} = \{x \in A_1(\alpha) \mid a(\gamma^a) \prec x$ and $\gamma^a \in Z^\alpha_x \}$.
  Let $Y^0_{1, a, \alpha}$ be the set of all $x \in Y_{1, a, \alpha}$ such that there are
  $\beta^\alpha_x \in (\alpha, \theta)$ and $W^\alpha_x \in U_{\beta^\alpha_x}$ such that, for all
  $y \in W^\alpha_x$:
	\begin{itemize}
		\item{$x \prec y$ and $\alpha \in Z^{\beta^\alpha_x}_y$;}
		\item{there is $B$ such that $(a^\frown (\alpha, x) ^\frown (\beta^\alpha_x, y), B) \in D$.}
	\end{itemize}
	Let $Y^1_{1, a, \alpha} = Y_{1, a, \alpha} \setminus Y^0_{1, a, \alpha}$. Find $i(1, a, \alpha) < 2$
  such that $Y^{i(1, a, \alpha)}_{1, a, \alpha} \in U_\alpha$, and let $Y^*_{1, a, \alpha} =
  Y^{i(1, a, \alpha)}_{1, a, \alpha}$. For $\alpha < \theta$, let $A_2(\alpha)$ be the set of
  $x \in A_1(\alpha)$ such that, for all stems $a$ possible for $(\emptyset, A_1)$ such that
  $a(\gamma^a) \prec x$ and $\gamma^a \in Z^\alpha_x$, $x \in Y^*_{1, a, \alpha}$.
	By the Diagonal Intersection Lemma (\ref{diagonal_intersection_lemma}), we
	have $A_2(\alpha) \in U_\alpha$ for all $\alpha < \theta$. Then
  $(\emptyset, A_2) \leq^* (\emptyset, A_1)$, and $(\emptyset, A_2)$ satisfies the
	following property, which we denote $(*)_2$:

	\begin{quote}
		Suppose that $q = (a ^\frown (\alpha, x) ^\frown (\beta, y), B) \leq (\emptyset, A_2)$ and $q \in D$.
    Then, for every $x' \in A_2(\alpha)$ such that $a(\gamma^a) \prec x'$ and $\gamma^a \in Z^\alpha_{x'}$,
    there is $\beta^\alpha_{x'} \in (\alpha, \theta)$ and $W^\alpha_{x'} \in U_{\beta^\alpha_{x'}}$
    such that, for all $y' \in W^\alpha_{x'}$, there is $B'$ such that $(a ^\frown (\alpha, x') ^\frown
    (\beta^\alpha_{x'}, y'), B') \in D$.
	\end{quote}

	Suppose that there is a stem $a = \{(\alpha, x), (\beta, y)\}$ possible for $(\emptyset, A_2)$ with $\alpha < \beta$ and a
  $B$ such that $(a, B) \in D$. Using $(*)_2$, we can define a $\vec{U}$-fat tree
	$T$ of height $2$ whose maximal
  elements are all $\langle (\alpha, x'), (\beta^\alpha_{x'}, y') \rangle$ such that $x' \in
  A_2(\alpha)$ and $y' \in W^\alpha_{x'}$. We are then done, as $T$ satisfies the requirements of the
  lemma. If there is no such stem $a$, then continue in the same manner.

	In this way, we can construct $A_n$ such that, if there is a stem $a$ possible for $(\emptyset, A_n)$
  with $|a| = n$ and a $B$ such that $(a, B) \leq (\emptyset, A_n)$ and
	$(a, B) \in D$, then there is a $\vec{U}$-fat tree of height $n$ as desired.
  For $\alpha < \theta$, let $A_\infty(\alpha) = \bigcap_{n < \omega} A_n(\alpha)$. For all $n < \omega$,
  $(\emptyset, A_\infty) \leq^* (\emptyset, A_n)$. Find $(a, B) \leq (\emptyset, A_\infty)$ such that
  $(a, B) \in D$. Let $n^* = |a|$. Then $a$ is possible for $(\emptyset, A_{n^*})$, so there is a fat
  tree of height $n^*$ as required by the lemma.
\end{proof}

Suppose that $T$ is a fat tree, $\gamma < \theta$, and $z \in \mathcal{P}_\kappa(\kappa^{+\gamma})$.
$T \restriction (\gamma, z)$ is the subtree of $T$ consisting of all $\langle (\alpha_i, x_i)
\mid i \leq k \rangle \in T$ such that, for all $i \leq k$, $\alpha_i \in Z^\gamma_z$ and $x_i
\prec z$. Let $\gamma_T := \sup(\{\alpha \mid$ for some $\langle (\alpha_i, x_i) \mid i \leq
k \rangle \in T$ and $i \leq k$, $\alpha = \alpha_i\})$. Note that,
if $\theta$ is weakly inaccessible and $|\mathcal{P}_\kappa(\kappa^{+\alpha})| < \theta$
for all $\alpha < \theta$, then $\gamma_T < \theta$.

\begin{theorem} \label{regular_thm}
	If $\theta$ is weakly inaccessible and $|\mathcal{P}_\kappa(\kappa^{+\alpha})| < \theta$ for all
  $\alpha < \theta$, then $\kappa$ remains regular in $V^{\bb{P}}$.
\end{theorem}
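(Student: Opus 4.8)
The plan is to show that $\kappa$ remains both regular and a strong limit in $V[G]$, where $G$ is $\bb{P}$-generic over $V$; the weak inaccessibility of $\theta$ together with $|\mathcal{P}_\kappa(\kappa^{+\alpha})| < \theta$ enters only in the regularity argument, through the bound $\gamma_T < \theta$ on the ordinals appearing in a fat tree.

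\emph{Regularity.} Suppose toward a contradiction that $\mu < \kappa$ and some $p$ forces $\name{f}$ to be a cofinal function from $\mu$ to $\kappa$; by iterating the Factorization Lemma we may take $p = (\emptyset, A)$. For each $\xi < \mu$ apply Lemma~\ref{denseSetLemma} to $p$ and the dense open set $D_\xi$ of conditions deciding $\name{f}(\xi)$, obtaining a fat tree $T_\xi$ of finite height $n_\xi$ every maximal branch of which extends into $D_\xi$. As noted before Lemma~\ref{denseSetLemma}, the hypotheses on $\theta$ give $\gamma_{T_\xi} < \theta$ (at each level a node branches into a $U_\alpha$-large, hence $<\theta$-sized, subset of $\mathcal{P}_\kappa(\kappa^{+\alpha})$, and $\theta$ is regular, so there are $<\theta$ nodes at each of the finitely many levels). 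Hence $\gamma^* := \sup_{\xi < \mu}\gamma_{T_\xi} < \theta$, since $\mu < \kappa < \theta$ and $\theta$ is regular. A standard extraction argument (intersecting the $T_\xi$ into the measure-one parts of $p$, legal since $\mu < \kappa$ and the measures are $\kappa$-complete; cf.\ \cite{gitikhandbook}) yields $q \leq^* p$ which forces that the value $\name{f}(\xi)$, for each $\xi < \mu$, is computed from the points of $C^{\spc}_G$ lying at indices $<\gamma^*$; in particular $q$ forces $\name{f}$ to depend only on that part of $C^{\spc}_G$.

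Now use Lemma~\ref{addLemma3} to extend $q$ to $q_1$ whose stem has top point $z$, of index $\gamma_1 \geq \gamma^*$. By the Factorization Lemma, $\bb{P}/q_1 \cong \bb{P}_{\vec U_z}/q_1' \times \bb{P}_{\vec U, \gamma_1+1}/(\emptyset, A^{q_1}\restriction(\gamma_1,\theta))$, and every point added to the supercompact club at an index $< \gamma_1$ is $\prec z$, hence is recorded in the $\bb{P}_{\vec U_z}$-factor. Since all of $\name{f}$ is computed from points at indices $<\gamma^* \leq \gamma_1$, it follows that $q_1$ forces $\name{f}$ to be (equivalent to) a $\bb{P}_{\vec U_z}$-name. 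But $|\bb{P}_{\vec U_z}| < \kappa$: the length of $\bb{P}_{\vec U_z}$ is $f^{\gamma_1}_{\gamma_1}(\kappa_z) < \kappa$, so the levels $\kappa_z^{+\xi}$ it involves all lie below $\kappa_z^{+f^{\gamma_1}_{\gamma_1}(\kappa_z)} < \kappa$, and $\kappa$ is a strong limit. A forcing of size $<\kappa$ preserves the regularity of $\kappa$, so $\name{f}$ is bounded in $\kappa$ in the corresponding extension, contradicting that $q_1 \leq p$ forces $\name{f}$ cofinal. Hence $\kappa$ is regular in $V[G]$.

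\emph{Strong limit.} Fix $\mu < \kappa$; we show $(2^\mu)^{V[G]} < \kappa$. The set of $p \in \bb{P}$ whose stem is nonempty with top point $y$, of index $\gamma$, with $\kappa_y > \mu$, and with $A^p(\alpha) \subseteq \{x \mid \kappa_x > \mu\}$ for all $\alpha > \gamma$, is dense (extend any condition by a top stem point with $\kappa_y > \mu$, then shrink the tail measure-one parts by a direct extension); fix such a $p$ in $G$. By the Factorization Lemma $\bb{P}/p \cong \bb{P}_{\vec U_y}/p' \times \mathbb{S}$, where $\mathbb{S} = \bb{P}_{\vec U, \gamma+1}/(\emptyset, A^p\restriction(\gamma,\theta))$; as before $|\bb{P}_{\vec U_y}| < \kappa$, and by the choice of $p$ every condition of $\mathbb{S}$ has all its stem points $x$ with $\kappa_x > \mu$. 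Write $G = G_y \times G_{\mathbb{S}}$. Then $(\mathbb{S},\leq^*)$ is $\kappa$-closed, and because all stem points of conditions of $\mathbb{S}$ have $\kappa_x > \mu$, the order $(\mathbb{S}/q, \leq^*)$ is $\mu^+$-closed for \emph{every} $q \in \mathbb{S}$; also $(\mathbb{S},\leq,\leq^*)$ has the Prikry property. These facts persist over $V[G_y]$, since $\bb{P}_{\vec U_y}/p'$ has size $<\kappa$ (the $>\mu$-completeness of the measures is preserved, and the Prikry property of $\mathbb{S}$ is preserved by a $<\kappa$-sized forcing). Consequently, over $V[G_y]$, for any name for a subset of $\mu$ the set of conditions of $\mathbb{S}$ deciding it is dense: build a $\leq^*$-decreasing sequence of length $\mu$ deciding the membership of each $\xi < \mu$, which is legal below any condition of $\mathbb{S}$. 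Hence $\mathbb{S}$ adds no new subset of $\mu$ over $V[G_y]$, so $\power(\mu)^{V[G]} = \power(\mu)^{V[G_y]}$; since $|\bb{P}_{\vec U_y}/p'| < \kappa$ and $\kappa$ is inaccessible in $V$, $(2^\mu)^{V[G_y]} < \kappa$, and therefore $(2^\mu)^{V[G]} < \kappa$. As $\mu < \kappa$ was arbitrary, $\kappa$ is a strong limit in $V[G]$, and combined with regularity this gives that $\kappa$ is inaccessible in $V^{\bb{P}}$.

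The two points needing the most care are the name-extraction step in the regularity argument — making precise that, below $q$, the name $\name{f}$ depends only on the part of $C^{\spc}_G$ at indices $<\gamma^*$ — and the preservation of the Prikry property of the tail $\mathbb{S}$ across the $<\kappa$-sized quotient $\bb{P}_{\vec U_y}/p'$ in the strong-limit argument; both are routine for forcings of this type but should be spelled out.
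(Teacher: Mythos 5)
Your overall strategy is the paper's: reduce to regularity via the Factorization Lemma, use Lemma \ref{denseSetLemma} to get fat trees deciding $\dot{f}(\xi)$, use weak inaccessibility of $\theta$ and $|\mathcal{P}_\kappa(\kappa^{+\alpha})|<\theta$ to bound $\sup_\xi \gamma_{T_\xi}<\theta$, and then ``seal'' with a stem point $z$ at a high index so that everything relevant happens below $\kappa_z$. But the central step --- that some $q\leq^* p$ forces $\dot f$ to be ``computed from the points of $C^{\spc}_G$ at indices $<\gamma^*$,'' so that $\dot f$ becomes a $\bb{P}_{\vec U_z}$-name below $q_1$ --- is asserted as a ``standard extraction argument'' and is exactly the part that does not follow from your setup. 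You apply Lemma \ref{denseSetLemma} only to $p=(\emptyset,A)$ itself, obtaining one fat tree $T_\xi$ per $\xi$. The maximal branches of $T_\xi$ give \emph{some} stems $b$ with $(b,B)\in D_\xi$, but an arbitrary $r\leq q_1$ has a stem $d$ that need not contain, or be compatible with extension by, any such branch: the points of $d$ at indices below $\gamma_1$ are interleaved arbitrarily with the levels of $T_\xi$. To handle this one needs, as in the paper, to apply Lemma \ref{denseSetLemma} to $c^\frown(a,A^*)$ for \emph{every} possible stem $c$ and every $\xi$, producing the interleaved system $\langle T_{c,\xi,i}\mid i\leq|c|\rangle$ of trees fat below/above the successive points of $c$; then, given $r=(d,F)\leq q$, one sets $c=d\restriction\gamma$, threads maximal branches of the $T_{c,\xi,i}$ through the gaps of $c$ so that the resulting stem $c'$ satisfies that $c'\cup d$ is possible for $r$, and concludes that $r$ is compatible with a condition of $D_\xi$ whose top stem point is $\prec z$. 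This compatibility argument (together with the choice of $z$ from the sets $Y_\gamma$ of the discussion preceding Lemma \ref{denseSetLemma}, and the shrinking $A^{**}(\alpha)=\{x\in A^*(\alpha)\mid x\prec z\}$ for $\alpha\in(\gamma^a,\gamma)$) is the actual content of the proof; ``intersecting the $T_\xi$ into the measure-one parts of $p$'' does not make sense as written, since the trees are not measure-one sets on single coordinates.

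Two smaller remarks. First, once the compatibility argument is in place you do not need the detour through ``$\dot f$ is a $\bb{P}_{\vec U_z}$-name plus small forcing preserves regularity'': the paper concludes directly that $q$ forces $\operatorname{ran}(\dot f)\subseteq\kappa_z$, which is both simpler and avoids having to justify the name reduction. Second, your strong-limit argument is extra work relative to the paper (which reduces the theorem to regularity in one line); it is plausible, but it leans on preservation of the Prikry property of the tail $\mathbb{S}$ over the $<\kappa$-sized quotient, which you flag but do not prove, and in any case the same factor-into-a-small-piece-times-a-sufficiently-closed-Prikry-piece analysis is what a complete argument requires. The count $|\bb{P}_{\vec U_z}|<\kappa$ is correct since $\kappa_z^{+o(z)}<\theta(\kappa_z)\leq\kappa$ and $\kappa$ is inaccessible in $V$.
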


\begin{proof}
	Let $p = (a, A) \in \bb{P}$, let $\delta < \kappa$,
  and suppose that $\dot{f}$ is a $\bb{P}$-name forced by $p$ to be a function from $\delta$ to $\kappa$.
  We will find $q \leq p$ forcing the range of $\dot{f}$ to be bounded below $\kappa$.

	For all $\xi < \delta$, let $D_\xi$ be the set of $(b, B) \in \bb{P}$ such that $(b, B) \Vdash
  ``\dot{f}(\xi) < \kappa_{b(\gamma^b)}."$ Each $D_\xi$ is a dense, open subset of $\bb{P}$
	below $p$. For
  $\xi < \delta$, let $S_\xi$ be the set of stems $b$ such that, for some $B$, $(b, B) \leq p$ and
  $(b, B) \in D_\xi$. For all $b \in S_\xi$, fix a $B^\xi_b$ witnessing this. For each $\beta \in
  (\gamma^a, \theta)$, let $A^*(\beta)$ be the set of $y \in A(\beta)$ such that, for all $\xi < \delta$
  and all $b \in S_\xi$ such that $b(\gamma^b) \prec y$ and $\gamma^b \in Z^\beta_y$, $y \in B^\xi_b(\beta)$. By the Diagonal Intersection Lemma (\ref{diagonal_intersection_lemma}),
	$A^*(\beta) \in U_\beta$.
  For $\beta \in \dom(A) \cap \gamma^a$, let $A^*(\beta) = A(\beta)$. Then $(a, A^*) \leq (a, A)$.
  Let $R$ be the set of stems possible for $(a, A^*)$. For $\gamma < \theta$, let $R_{<\gamma} =
  \{c \in R \mid \gamma^c < \gamma\}$. For all $c \in R$, let
	$p_c = (a, A^*) \downarrow c$. For all
  $c \in R$ and $\xi < \delta$, apply Lemma \ref{denseSetLemma} to $p_c$ and $D_\xi$ to
	obtain a $(\vec{U}, c)$-fat sequence of trees $\langle (\mathcal{T}_{c, \xi, \ell},
	\mathcal{B}_{c, \xi, \ell}) \mid \ell \leq |c| \rangle$. Let $E$ be the set of
  limit ordinals $\gamma < \theta$ such that, for all $c \in R_{<\gamma}$, all $\xi < \delta$,
	and all $T \in \mathcal{T}_{c, \xi, |c|}$, we have $\gamma_T < \gamma$.
  Then $E$ is club in $\theta$. Fix $\gamma \in E \setminus
  (\gamma^a + 1)$.

	\begin{claim}
		Let $Y_\gamma$ be the set of $z \in A^*(\gamma)$ such that, for all
		$c \in R_{{<}\gamma}$ such that $c(\gamma^c) \prec z$ and $\gamma^c \in
		Z^\gamma_z$, for all $\xi < \delta$, and for all $T \in \mathcal{T}_{c,
		\xi, |c|}$, we have that $T \restriction (\gamma, z)$ is $\vec{U}$-fat below
		$(\gamma, z)$ and has the same height as $T$. Then $Y_\gamma \in U_\gamma$.
	\end{claim}

	\begin{proof}
		Let $j = j_\gamma$. We show that $j``\kappa^{+\gamma} \in j(Y_\gamma)$.
		Note that, for all $c \in j(R_{{<}\gamma})$, if $c(\gamma^c) \prec
		j``\kappa^{+\gamma}$ and $\gamma^c \in j``\gamma$, then there is a stem
		$\bar{c} \in R_{{<}\gamma}$ such that $c = j(\bar{c})$. Fix such a $c$
		and an ordinal $\xi < \delta$. Since $|\mathcal{T}_{\bar{c}, \xi, |\bar{c}|}|
		< \kappa$, we have $j(\mathcal{T}_{\bar{c}, \xi, |\bar{c}|}) =
		\{j(T) \mid T \in \mathcal{T}_{\bar{c}, \xi, |\bar{c}|}\}$. If
		$T \in \mathcal{T}_{\bar{c}, \xi, |\bar{c}|}$, then $j(T) \restriction
		(j(\gamma), j``\kappa^{+\gamma}) = j``T$, which is fat below $(j(\gamma),
		j``\kappa^{+\gamma})$ of the same height as $j(T)$. Hence, $j``\kappa^{+\gamma} \in j(Y_\gamma)$.
	\end{proof}

	Choose $z \in Y_\gamma$. Then $q = (a^\frown (\gamma, z), A^{**}) \leq (a, A^*)$, where
  $A^{**}(\alpha) = A^*(\alpha)$ for all $\alpha \in \dom(A^*) \cap \gamma^a$,
	$A^{**}(\alpha) = \{x \in A^*(\alpha) \mid x \prec z\}$ for all
  $\alpha \in (\gamma^a, \gamma)$, and
	$A^{**}(\alpha) = \{x \in A^*(\alpha) \mid z \prec x\}$ for all $\alpha
  \in (\gamma, \theta)$.

	We claim that $q$ forces the range of $\dot{f}$ to be bounded below $\kappa_z$. Suppose
  for sake of contradiction that there is $\xi < \delta$ and $r \leq q$ such that $r \Vdash
  ``\dot{f}(\xi) \geq \kappa_z."$ Let $r = (d, F)$, and let $c = \{(\alpha, x) \in d \mid \alpha
  < \gamma\}$. Then $c \in R_{<\gamma}$, $c(\gamma^c) \prec z$, and $\gamma^c \in Z^\gamma_z$,
  so $T \restriction (\gamma, z)$ is fat below $(\gamma, z)$, of the same height
	as $T$, for every $T \in \mathcal{T}_{c, \xi, |c|}$. Applying Fact
	\ref{possible_fact} inside $\bb{P}_{\vec{U}_z}$, it follows that we can
	find $\vec{b} \in \mathcal{B}_{c, \xi, |c|}$ such that $a_{\vec{b}} \cup d$
	is possible for $r$. By the definition of $\langle (\mathcal{T}_{c, \xi, \ell},
	\mathcal{B}_{c, \xi, \ell}) \mid \ell \leq |c| \rangle$, it follows that
	there is $B'$ such that $(a_{\vec{b}}, B') \in D_\xi$. Moreover, by our
	construction of $A^*$, we may assume that, for all $\alpha \in \dom(B')
	\cap (\gamma^{a_{\vec{b}}})$, we have $B'(\alpha) = A^*(\alpha)$. All of
	this together means that $(a_{\vec{b}}, B')$ and $r$ are compatible.
	However, as $(a_{\vec{b}}, B') \in D_\xi$, we have $(a_{\vec{b}}, B') \Vdash
  ``\dot{f}(\xi) < \kappa_{a_{\vec{b}}(\gamma^{a_{\vec{b}}})} < \kappa_z,"$ contradicting
	the assumption that $r \Vdash ``\dot{f}(\xi) \geq \kappa_z."$
\end{proof}

\section{Cardinal arithmetic} \label{arithmetic_section}

For the rest of the paper, we will let $\theta$ be the least weakly inaccessible
cardinal above $\kappa$. In this section, we show that, with this assumption,
if $G$ is $\bb{P}$-generic over $V$, then, in
$V[G]$, every limit point of $C_G$ below $\kappa$ is a singular strong limit cardinal at
which SCH fails. We begin by making the following definition.

\begin{definition}
	For $\beta<\theta$ and $y \in X_\beta$, we write
	$o(y)$ for $f_\beta^\beta(\kappa_y) (= \otp(Z^\beta_y))$.
\end{definition}

We note that $o(y)$ formally depends on $\beta$ as well as $y$, but, as
the value of $\beta$ will always be clear from context, we suppress its mention.
For $\nu<\kappa$, we let $\theta(\nu)$ be the least weakly inaccessible cardinal
greater than $\nu$.  Using this notation we have $o(y) < \theta(\kappa_y)$.
Also, by the preparation of our ground model, we have $2^\kappa \geq \theta$
and, for all limit ordinals $\beta < \kappa$, we have $|\bigcup_{\alpha < \beta}
\mathcal{P}_\kappa (\kappa^{+\alpha})| = \kappa^{+\beta}$. As a result,
for all limit ordinals $\beta < \theta$, the following statements hold for almost
all $y \in X_\beta$:
\begin{itemize}
	\item $2^{\kappa_y} \geq \theta(\kappa_y)$;
	\item $|\bigcup_{\alpha < o(y)}\mathcal{P}_{\kappa_y}(\kappa_y^{+\alpha})|
	= \kappa_y^{+o(y)}$.
\end{itemize}
We henceforth assume that in fact all $y \in X_\beta$ satisfy these two conditions.

\begin{lemma} \label{knaster_lemma}
	Suppose that $\beta < \theta$ is a limit ordinal and $y \in X_\beta$.
	Then $\P_{\vec{U}_y}$, as defined before Lemma
	\ref{factorizationLemma}, has the $\kappa_y^{+o(y)+1}$-Knaster
property.  \end{lemma}

\begin{proof}
  It is not hard to see that there are just $\kappa_y^{+o(y)}$ many
  stems in this poset and that conditions with the same
  stem are compatible.
\end{proof}

Suppose now that $G$ is $\bb{P}$-generic over $V$, and fix $\nu \in \lim(C_G)$. Also
fix $p \in G$, $\beta \in \dom(a^p)$, and $y \in X_\beta$ such that
$a^p(\beta) = y$ and $\kappa_y = \nu$. Since $\nu \in \lim(C_G)$, we know
that $\beta$ is a limit ordinal.

\begin{lemma}
	$\nu$ is a strong limit cardinal in $V[G]$.
\end{lemma}

\begin{proof}
	Fix $\mu < \nu$, and let $\beta_0 < \beta$ be the largest limit ordinal such
	that there exists $q \in G$ with $\beta_0 \in \dom(a^q)$ and
	$\kappa_{a^q(\beta_0)} \leq \mu$, if such a $q$ exists. Let $\beta_0 = 0$
	otherwise. Let $\beta_1 < \beta$ be the least ordinal such that there exists
	$q \in G$ such that $\beta_1 \in \dom(a^q)$ and $\mu < \kappa_{a^q(\beta_1)}$.
	Note that $\beta_1$ is a successor ordinal and there are only finitely
	many ordinals $\gamma$ between $\beta_0$ and $\beta_1$ for which there
	exists $q \in G$ such that $\gamma \in \dom(a^q)$.
	We can now find $q \in G$ such that
	\begin{itemize}
		\item $\beta_1 \in \dom(a^q)$ and, if $\beta_0 > 0$, then $\beta_0
		\in \dom(a^q)$ as well;
		\item for all $\gamma$ in the interval $[\beta_0, \beta_1]$,
		either $\gamma \in a^q$ or $A^q(\gamma) = \emptyset$.
	\end{itemize}

	If $\beta_0 = 0$, then, in $V$ the direct ordering $\leq^*$ is $\mu^+$-closed
	in $\bb{P}/q$, so, by Lemma \ref{prikryLemma}, forcing with $\bb{P}$ below
	$q$ does not add any new subsets to $\mu$. Since $\nu$ was strongly inaccessible
	and hence strong limit in $V$, it follows that $2^\mu < \nu$ continues to hold
	in $V[G]$.

	If $\beta_0 > 0$, then let $y_0 = a^q(\beta_0)$. By the Factorization Lemma
	(\ref{factorizationLemma}), $\bb{P}/q \cong \bb{P}_{\vec{U}_{y_0}}/q_0
	\times \bb{P}_{\vec{U}, \beta_0 + 1}/q_1$ for some conditions $q_0$ and $q_1$.
	As in the case in which $\beta_0 = 0$, forcing with $\bb{P}_{\vec{U}, \beta_0 + 1}/q_1$
	does not add any new subsets to $\mu$. Moreover, in $V$,
	we have $|\bb{P}_{\vec{U}_{y_0}}| < \nu$ and $\nu$ is strongly inaccessible,
	so forcing with $\bb{P}_{\vec{U}_{y_0}}$ cannot add $\nu$-many distinct
	subsets to $\mu$. Again, it follows that $2^\mu < \nu$ continues to hold in $V[G]$.
\end{proof}

The argument of the above proof easily adapts to yield, together with Theorem
\ref{regular_thm}, the following corollary.

\begin{corollary}
	$\kappa$ is strongly inaccessible in $V[G]$.
\end{corollary}

We next argue that $\nu$ is singular in $V[G]$. The proof breaks into two cases,
depending on whether or not $\cf^V(\beta) < \kappa$.

Suppose first that $\cf^V(\beta) < \kappa$. Then, by Lemma \ref{nicenessLemma},
we have that $\cf^V(\beta) < \kappa_y = \nu$ and $Z^\beta_y$ is unbounded in
$\beta$. It follows by genericity that the set $A := \{\alpha \in Z^\beta_y \mid
\exists p \in G [\alpha \in a^p]\}$ is unbounded in $\beta$ and that
\[
	\nu = \sup\{\kappa_x \mid \exists \alpha \in A \exists p \in G [a^p(\alpha) = x]\}.
\]
Therefore, in $V[G]$, we have $\cf(\nu) = \cf(\beta) < \nu$.

Next, suppose that $\cf^V(\beta) \geq \kappa$. The following lemma shows
that $\cf^{V[G]}(\nu) = \omega$.

\begin{lemma} \label{cofomega} If $\cf^V(\beta) \geq \kappa$,
then $\nu$ and $\nu^{+o(y)}$ change their cofinality to $\omega$ in $V[G]$.
\end{lemma}

\begin{proof}
  Work in $V$. Using the Factorization Lemma (\ref{factorizationLemma}), $\P_{\vec{U}}/p \cong \P_{\vec{U}_y}/p_0
  \times \P_{\vec{U}, \beta+1}/p_1$ for some $p_0$ and $p_1$. As $\po_{\vec{U},\beta+1}/p_1$ does
  not add new bounded subsets to $\theta(\nu)$, it is sufficient to focus on the forcing
  $\po_{\vec{U}_y}$ which adds a Radin club to $\nu$. For notational simplicity, let
  $\delta = f^\beta_\beta(y) = o(y)$, and let $\vec{U}_y = \langle V_\xi \mid \xi < \delta \rangle$.
  By Lemma \ref{nicenessLemma}, we have $\rho := \cf(\delta) \geq \nu$ and $\delta <
	\theta(\nu)$. We will show that $\nu$ and $\nu^{+\delta}$
  change their cofinalities to $\omega$ after forcing with $\P_{\vec{U}_y}$.

  Choose an increasing continuous sequence $\vec{\delta} = \la \delta_\alpha \mid
  \alpha < \rho\ra$ cofinal in $\delta$. Let $G_y$ be $\P_{\vec{U}_y}$-generic
  over $V$. For every $\beta' < \delta$, let $\alpha(\beta')
  < \rho$ be the minimal $\alpha < \rho$ so that $\beta'
  \leq \delta_\alpha$.

  Since $\delta < \theta(\nu)$, we have $\nu^{+\delta} > \rho$.
  Let $\alpha_0 < \rho$ be the least ordinal so that
  $\rho < \nu^{+\delta_{\alpha_0}}$.  By reindexing, we can assume that $\alpha_0 = 0$.
  Note that, for every $\beta'$ with $\delta_0 \leq \beta' <
  \delta$, $Y_{\beta'} := \{x \in \mathcal{P}_\nu(\nu^{+{\beta'}})
	\mid \alpha(\beta') \in x\}$ belongs to
  $V_{\beta'}$. For $x \in \mathcal{P}_\nu(\nu^{+{\beta'}})$, let
	$\nu_x := x \cap \nu$.

  Move now to $V[G_y]$. Given $x \in C^{\spc}_{G_y}$, let $\beta(x)$ be the
	unique $\beta' < \delta$ such that there exists $q \in G_y$ for which
	$a^q(\beta') = x$, and let $\alpha(x) = \alpha(\beta(x))$.
	By the previous paragraph, there is some
	$\nu_0 \in C_{G_y}$ such that, for every $x \in
  C^{\spc}_{G_y}$, if $\nu_x > \nu_0$, then $x \in Y_{\beta(x)}$,
	and hence $\alpha(x) \in x$.
	Let $x_0$ be the minimal $x \in C^{\spc}_{G_y}$
  satisfying the above.  Starting from $x_0$, we define a sequence $\vec{x} = \la
  x_n \mid n < \omega\ra \subset C^{\spc}_{G_y}$.  For each $n < \omega$, let
  $x_{n+1}$ be the minimal $x$ above $x_n$ in $C^{\spc}_{G_y}$ so that
	$\sup(x_{n} \cap \rho) < \alpha(x) < \rho$. Let $\nu_\omega =
	\bigcup_{n<\omega} \nu_{x_n}$.

	We claim that $\nu_\omega = \nu$. Suppose otherwise.  Then $\nu_\omega =
  \nu_x$ for some $x \in C^{\spc}_{G_y}$. Let $\alpha = \alpha(x)$, and note
	that $\alpha \in x$. Since
  $\la \nu_{x_n} \mid n < \omega\ra$ is cofinal in $\nu_\omega$, we have $x
  \cap \rho = \bigcup_{n<\omega} (x_n \cap \rho)$.  There is thus some $m < \omega$ such that
	$\alpha \in x_m$. But $\alpha \geq \alpha(x_{m+1}) > \sup(x_m \cap \rho)$,
	which is a contradiction.

  It follows that $\nu = \nu_\omega$, so $\nu$ changes its cofinality to
  $\omega$.  The set $C^{\spc}_{G_y} \subset \power_\nu(\nu^{+\delta})$ is
  $\subseteq$-cofinal in $\power_\nu(\nu^{+\delta})$. Since $\la \nu_{x_n}
  \mid n < \omega\ra$ is cofinal in $\nu$, $\la x_n \mid n <
  \omega\ra$ is $\subset$-cofinal in $C^{\spc}_{G_y}$. Thus, $\nu^{+\delta} = \bigcup_{n < \omega}
  x_n$.  It follows that $\cf(\nu^{\delta}) = \omega$, as each $x_n$ is bounded
  in $\nu^{+\delta}$.
\end{proof}

\begin{remark}
	It follows easily from the above proof that, if $\cf^V(\beta) \geq \kappa$, then
	all $V$-regular cardinals between $\nu$ and $\nu^{+o(y)}$ change their
	cofinality to $\omega$ in $V[G]$, as well.
\end{remark}

In either case, we have shown that $\nu$ is a singular strong limit cardinal in
$V[G]$. To show that SCH fails at $\nu$, we first make the following observation.

\begin{lemma} \label{new_successor_lemma}
	$(\nu^+)^{V[G]} = (\nu^{+o(y) + 1})^V$.
\end{lemma}

\begin{proof}
	Exactly as in the start of the proof of Lemma \ref{cofomega}, since
	$\bb{P}_{\vec{U}, \beta + 1}/p_1$ does not add new subsets to
	$\theta(\nu)$, it will suffice to show that forcing with
	$\bb{P}_{\vec{U}_y}$ collapses all $V$-cardinals in the interval
	$(\nu, (\nu^{+o(y)})^V]$ and preserves all cardinals greater than or equal to
	$\nu^{+o(y) + 1}$. Observe first that, in $V[G_y]$, we have
	$\nu^{+o(y)} = \bigcup C^{\spc}_{G_y}$. Since $C^{\spc}_{G_y}$ is
	a $\prec$-increasing sequence of cofinality $\cf^{V[G]}(\nu) < \nu$,
	consisting sets of cardinality less than $\nu$, it follows that
	$|(\nu^{+o(y)})^V| = \nu$ in $V[G]$. Next note that, in $V$, Lemma
	\ref{knaster_lemma} implies that $\bb{P}_{\vec{U}_y}$ has
	the $\nu^{+o(y) + 1}$-Knaster property and hence preserves all cardinals
	greater than or equal to $\nu^{+o(y) + 1}$.
\end{proof}

To conclude that SCH fails at $\nu$ in $V[G]$, it is now enough to observe that,
in $V$, we have $2^\nu \geq \theta(\nu)$ and $\theta(\nu)$ is a weakly
inaccessible cardinal greater than $\kappa^{+o(y) + 1}$. It follows that, in
$V[G]$, we still have $2^\nu \geq \theta(\nu) > \kappa^{+o(y) + 1}$, and
$\theta(\nu)$ remains a cardinal. This completes the argument that, in
$V[G]$, every limit point of $C_G$ below $\kappa$ is a singular strong limit
cardinal at which SCH fails.

\section{Approachability} \label{weaksquare}

In this section we characterize precisely which successors $\nu^+$ for
$\nu \in \lim(C_G)$ have reflection properties and then construct the final model
in which the conclusion of Theorem \ref{thm1} will hold. We
begin with the following lemma. We will later need to apply
the lemma to posets $\bb{P}_{\vec{U}_y}$ for $y \in \mathcal{P}_\kappa(\kappa^{+\beta})$,
so note that its proof does not rely on the weak inaccessibility of $\theta$.

\begin{lemma} \label{club_subset}
	Suppose that, in $V$,
	\begin{itemize}
		\item $\delta$ is an ordinal of cofinality $\mu < \kappa$;
		\item $p \in \bb{P}$
		\item $\nu_0 < \mu < \nu_1$ are such that one of the following four alternatives
		holds:
		\begin{itemize}
			\item $p$ forces $\nu_0$ and $\nu_1$ to be successive limit points of
			$\dot{C}_G$, there is $\alpha \in \dom(a^p)$ such that
			$a^p(\alpha) = y_0$, $\kappa_{y_0} = \nu_0$, and $\nu_0^{+o(y)} < \mu < \nu_1$;
			\textbf{or}
			\item $\nu_1 = \kappa$ and $p$ forces that $\nu_0$ is the largest limit
			point of $\dot{C}_G$ (so, in particular, $p$ forces $\dot{C}_G$ to have a
			final segment of order type $\omega$) and there are $\alpha$ and
			$y_0$ as in the previous alternative; \textbf{or}
			\item $\nu_0 = 0$ and $p$ forces $\nu_1$ to be the least limit point of
			$\dot{C}_G$;
			\item $\nu_0 = 0$, $\nu_1 = \kappa$, and $p$ forces that
			$\otp(\dot{C}_G) = \omega$.
		\end{itemize}
		\item $\dot{C}$ is a $\bb{P}$-name forced by $p$ to be a club in $\delta$.
	\end{itemize}
	Then there is a direct extension $p' \leq^* p$ and a club $D$ in $\delta$
	such that $p' \Vdash ``D \subseteq \dot{C}"$.
\end{lemma}

\begin{proof}
	First we show that it is enough to consider $\delta = \mu$.
	Assume for the moment that $\mu < \delta$.  Let $\pi:\mu \to \delta$ be an
	increasing, continuous, and cofinal function. By passing to a name for a
	subset of $\dot{C}$ we can assume that it is forced that $\dot{C}$ is a subset
	of the range of $\pi$. Now a condition will force that there is a ground model
	club contained in $\dot{C}$ if and only if there is a ground model club
	contained in $\pi^{-1}(\dot{C})$.

	So we may assume that $\delta = \mu$. If $p$ forces either that $\nu_1$ is the least limit
	point of $\dot{C}_G$ or that $\otp(\dot{C}_G) = \omega$, then, by
	applying Lemma \ref{prikryLemma} to
	$\bb{P}/p$, we see that forcing with $\bb{P}$ below $p$ does not add any
	bounded subsets to $\nu_1$, so there is in fact a direct extension $p'$ of $p$
	deciding the value of $\dot{C}$.

	Thus, assume we are in one of the first two alternatives, so $p$ forces that
	$\nu_0$ is a limit point of $C_G$ and there is $\alpha \in \dom(a^p)$ such that
	$a^p(\alpha) = y_0$, $\kappa_{y_0} = \nu_0$, and $\nu_0^{+o(y)} < \mu < \nu_1$. Then, by the Factorization Lemma (\ref{factorizationLemma}),
	$\bb{P}/p \cong \bb{P}_{\vec{U}_{y_0}}/p_0 \times \bb{P}_{\vec{U}, \alpha + 1}/p_1$
	for some $p_0$ and $p_1$. Again, forcing with $\bb{P}_{\vec{U}, \alpha + 1}$
	below $p_1$ does not add any bounded subsets to $\nu_1$, so there is a
	direct extension $p_1'$ forcing $\dot{C}$ to be equal to the interpretation of
	some $\bb{P}_{\vec{U}_{y_0}}$-name, $\dot{C}_0$. But then, by the
	$\nu_0^{+o(y) + 1}$-cc of $\bb{P}_{\vec{U}_{y_0}}$, there is a club
	$D$ in $\mu$ such that $p_0 \Vdash ``D \subseteq \dot{C}"$.
\end{proof}

We use Lemma \ref{club_subset} to show that the approachability property fails at
certain points along our Radin club. Recall that we are assuming that
$\theta$ is the least weakly inaccessible cardinal above $\kappa$.

\begin{lemma} \label{app_lemma} Suppose that $\beta$ is a limit ordinal with $\cf^V(\beta) < \kappa$
and $p$ is a condition such that $a^p(\beta) = y$. Then $p$ forces that
$\kappa_y^{+o(y)+1} \notin I[\kappa_y^{+o(y)+1}]$.  \end{lemma}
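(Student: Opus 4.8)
The plan is to show that $p$ forces $\kappa_y^{+o(y)+1} \notin I[\kappa_y^{+o(y)+1}]$ by a density argument: given a $\P$-name $\dot{\vec z}$ for a sequence of bounded subsets of $\kappa_y^{+o(y)+1}$ and a condition $q \leq p$, I want to find $q' \leq q$ and a name $\dot\gamma$ for a limit ordinal in $\kappa_y^{+o(y)+1}$ of cofinality $\cf^V(\beta)$ which $q'$ forces to be \emph{not} $\dot{\vec z}$-approachable. Write $\mu = \kappa_y^{+o(y)+1}$ and $\lambda = \cf^V(\beta)$; note that after forcing, $(\kappa_y^+)^{V[G]} = \mu$, so $\mu$ really is a successor cardinal in the extension, and the target approachable point $\gamma$ will have cofinality $\lambda < \kappa_y$. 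By Lemma~\ref{factorizationLemma}, below $p$ the forcing factors as $\P_{\vec U_y}/p' \times \P_{\vec U, \beta+1}/(\emptyset, A\restriction(\beta,\theta))$; the left factor is $\kappa_y^{+o(y)+1}$-Knaster, hence adds no new bounded subsets of $\mu$ and preserves $\mu$, while the point $\gamma$ we build will be generic for the tail forcing.

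The core of the argument is a diagonalization against the sequence $\dot{\vec z}$, using the genericity of the Radin club together with Lemma~\ref{club_subset}. Here is the key idea. In the tail forcing $\P_{\vec U, \beta+1}$, between the coordinate $\beta$ (carrying $y$) and coordinate $\beta+1$ (carrying some $y' \in X_{\beta+1}$ with $\kappa_y^{+o(y)+1} \le \kappa_{y'}$, which exists by density), there is a generically chosen increasing sequence of points below $y'$ whose $\kappa$-parts form a cofinal-of-order-type-$\lambda$ subset of $\mu$ (using $\cf^V(\beta) = \lambda$ and item (2) of Lemma~\ref{nicenessLemma}). Let $\gamma = \sup$ of the relevant ordinals, an ordinal of cofinality $\lambda$. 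Using Lemma~\ref{club_subset} (applicable precisely because $\kappa_y^{+o(y)+1} \le \delta < \kappa_{y'}$), for any name $\dot A$ for a club in $\gamma$ and any condition, there is a direct extension forcing a ground-model club into $\dot A$. Now the standard approachability-obstruction argument: if $\gamma$ were $\dot{\vec z}$-approachable via a witness $\dot A$, then along a ground-model club $D \subseteq \gamma$ every proper initial segment $\dot A \cap \bar\beta$ would be some $\dot z_\alpha$ with $\alpha < \gamma$. One then shows, by a sealing/fusion-style argument over the initial segments of the generic sequence converging to $\gamma$ and using the $\mu$-Knaster factor to reflect, that the $z_\alpha$ witnessing the initial segments of $A$ must be decided too early — there is no room for a name of an unbounded subset of $\gamma$ of order type $\lambda$ all of whose initial segments appear in $\vec z \restriction \gamma$, because the generic information about $\gamma$ itself only appears at the very top.

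Concretely, the contradiction is extracted as follows. Suppose $r \le q$ forces that $\dot A$ witnesses $\dot{\vec z}$-approachability of $\dot\gamma$, with $D$ (ground model, club in $\gamma$) forced into $\dot A$ by a direct extension. For each $\bar\beta \in D$, $r$ forces $\dot A \cap \bar\beta = \dot z_{\dot\alpha_{\bar\beta}}$ for some $\dot\alpha_{\bar\beta} < \dot\gamma$. Since $\gamma$ has cofinality $\lambda < \kappa_y$ and is the supremum of a generic sequence, one uses the Prikry property (Lemma~\ref{prikryLemma}) and the factorization to peel off the final segment of the stem at coordinate $\beta+1$: below a fixed stem deciding enough of the generic object below $\gamma$, the values $\dot\alpha_{\bar\beta}$ are forced to be $<$ some ordinal $\gamma_0 < \gamma$ (because $\P_{\vec U_y}/p'$ is $\mu$-cc and the information below coordinate $\beta$ plus a stem up to some $\bar\beta_0 < \gamma$ already decides all the $z_\alpha$ for $\alpha < \gamma_0$). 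But then $\dot A \cap \bar\beta$ for $\bar\beta > \gamma_0$, being $\dot z_{\dot\alpha_{\bar\beta}}$ with $\dot\alpha_{\bar\beta} < \gamma_0$, would be among fewer than $\mu$ fixed sets, yet $\{\dot A \cap \bar\beta \mid \bar\beta \in D, \bar\beta > \gamma_0\}$ has order type $\lambda$ and is strictly increasing — fine so far — the actual contradiction is that $\dot A \cap \bar\beta$ would then be decided by the part of the condition below $\gamma_0$, forcing $\bar\beta$ itself to be decided (as $\sup(\dot A \cap \bar\beta) = \bar\beta$), contradicting that $\bar\beta$ ranges over a generically stretched club.

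The main obstacle, and the part requiring real care, is this last step: making precise why the witness $\dot A$'s initial segments cannot be captured by $\dot{\vec z}\restriction\gamma$. This is exactly the point where one must exploit that the generic sequence converging to $\gamma$ is added at coordinate $\beta+1$ with its initial segments appearing only progressively, combined with the $\mu$-Knaster property of $\P_{\vec U_y}$ (which ensures no bounded subsets of $\mu$ are added by that factor and that nice names for the $\dot z_\alpha$, $\alpha<\gamma$, use antichains of size $<\mu$, hence are decided by a bounded part of the generic). I expect the proof to mirror the Gitik--Sharon / Cummings--Foreman bad-scale arguments and Sinapova's treatment in \cite{sinapovauncountable1}: one reduces to a name $\dot A$ living in the tail forcing, analyzes it via fat trees (Lemma~\ref{denseSetLemma}) and the club $E \subseteq \theta$, and derives the contradiction from the mismatch between $\cf(\gamma) = \lambda < \kappa_y$ and the fact that any $\dot{\vec z}$-approachability witness for $\gamma$ would have to ``know'' about $\gamma$ from information strictly below it.
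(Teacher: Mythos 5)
Your reduction via the Factorization Lemma points in the right direction, but the engine of your argument does not run. First, a role reversal: it is the tail factor $\P_{\vec{U},\beta+1}$ that adds no new subsets of $\kappa_y^{+o(y)+1}$ (this, via the Prikry property, is what lets one treat the approachability witness as a $\P_{\vec{U}_y}$-name), whereas the Knaster factor $\P_{\vec{U}_y}$ certainly does add bounded subsets of $\mu=\kappa_y^{+o(y)+1}$: it adds the Radin club through $\kappa_y$ and collapses the interval $(\kappa_y,\kappa_y^{+o(y)}]$. More seriously, your key object does not exist. There are no coordinates strictly between $\beta$ and $\beta+1$, so the generic adds no points with $\kappa$-part in $(\kappa_y,\kappa_{y'})$; and $\mu$ is preserved (it becomes $(\kappa_y^+)^{V[G]}$), so no part of the forcing adds a cofinal subset of $\mu$ (or of any new ordinal $\gamma\le\mu$ of your kind) of order type $\lambda=\cf^V(\beta)<\kappa_y$. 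Your appeal to Lemma \ref{club_subset} is also internally inconsistent: that lemma requires the target ordinal to have cofinality $\delta$ with $\kappa_y^{+o(y)+1}\le\delta<\kappa_{y'}$, while your $\gamma$ is supposed to have cofinality $\lambda<\kappa_y$. Finally, even granting a single non-approachable point, $\mu\notin I[\mu]$ requires defeating every club, i.e.\ showing the non-approachable points are stationary (or arguing by contradiction from a full witness), which your plan does not address.

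The missing idea is reflection by a supercompact ultrapower, which is where the paper's proof actually takes place. Assume some extension of $p$ forces $\mu\in I[\mu]$ with witness $\langle\dot z_\gamma\mid\gamma<\mu\rangle$, reduced to a $\P_{\vec{U}_y}$-name as above. Fix $\alpha\in Z^\beta_y$ above $\max(\dom(a^p)\cap\beta)$ and let $j:V\to M$ witness that $\kappa_y$ is $\kappa_y^{+o(y)+1}$-supercompact via an ultrafilter projecting to $\bar{u}^\beta_\alpha(y)$. In $j(\P_{\vec{U}_y})$ one can place $\hat{y}=j``\kappa_y^{+f^\beta_\alpha(\kappa_y)}$ and some $\hat{y}'$ on consecutive coordinates; the ordinal $\mu'=\sup j``\mu$ is forced approachable with respect to $j(\vec{z})$, and its cofinality is $\mu$ itself, which is exactly what makes Lemma \ref{club_subset} applicable \emph{in $M$} to the name for the approachability club at $\mu'$, yielding an $M$-club $D\subseteq\dot{C}$ below a direct extension. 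Pulling back to $E=\{\gamma<\mu\mid j(\gamma)\in D\}$ and using elementarity, every $x\subseteq E\cap\gamma^*$ of order type at most $\cf(o(y))<\kappa_y$ admits a condition $p_x\le p_0$ forcing $x\subseteq\dot z_\gamma$ for some $\gamma<\hat{\gamma}$; the $\kappa_y^{+o(y)+1}$-Knaster property then puts $\kappa_y^{+o(y)+1}$ many such $p_x$ into one generic filter, contradicting that the sets $\dot z_\gamma$, $\gamma<\hat{\gamma}$, each of order type $<\kappa_y$, have too few subsets in the extension. Your proposal contains none of this reflection step, and without it there is no singular ordinal of small cofinality available for the diagonalization you describe.
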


\begin{proof}
	Assume for a contradiction that (some extension of) $p$ forces
  $\kappa_y^{+o(y)+1} \in I[\kappa_y^{+o(y)+1}]$.  Let $\langle \dot{z}_\gamma \mid \gamma <
	\kappa_y^{+o(y)+1} \rangle$ be a name for the approachability
	witness. Recall that $\kappa_y$ is forced to be singular in the extension
	by $\bb{P}$ and $\kappa_y^{+o(y) + 1}$ is forced to be its successor. Therefore,
	for all limit $\gamma < \kappa_y^{+o(y) + 1}$, $p \Vdash ``\cf(\gamma) < \kappa_y"$.
	As a result, we can assume that the order type of each $\dot{z}_\gamma$ is forced
to be less than $\kappa_y$. By the Factorization Lemma (\ref{factorizationLemma}), $\P_{\vec{U}} / p
\cong \P_{\vec{U}_y}/p_0 \times \P_{\vec{U}, \beta + 1}/p_1$ for some $p_0$
and $p_1$. By the Prikry property, $\P_{\vec{U}, \beta + 1}/p_1$ does not add
any new subsets to $\kappa_y^{+o(y)+1}$, so we may in fact assume that
$\langle \dot{z}_\gamma \mid \gamma < \kappa_y^{+o(y)+1} \rangle$ is a
$\P_{\vec{U}_y}$ name forced by $p_0$ to be a witness to approachability.
Set $\nu: = \kappa_y$ and, for $x \in \mathcal{P}_\nu(\nu^{+o(y)})$, let
$\nu_x := x \cap \nu$ and $\bar{o}(x) = \otp(\{\eta < o(y) \mid \nu^{+\eta} \in x\})$.

Since $y \in X_\beta$, it follows that $\nu$ is $\nu^{+o(y)}$-supercompact.
Since $\cf(\beta) < \kappa$ and hence $\cf(o(y)) < \nu$, it follows that
$\nu$ is in fact $\nu^{+o(y) + 1}$-supercompact.
Let $j:V \to M$ witness that $\nu$ is $\nu^{+o(y) + 1}$-supercompact.
Set $\delta := \sup(j``\nu^{+o(y) + 1})$ and $\mu = \cf(\delta) = \nu^{+o(y) + 1}$.
In $M$, $j(p_0)$ forces that $\delta$ is approachable
with respect to $j(\langle \dot{z}_\gamma \mid \gamma < \nu^{+o(y)+1}
\rangle )$, so there is a $j(\bb{P}_{\vec{U}_y})$-name $\dot{C}$ for a club subset of $\delta$ such that,
for all $\gamma < \delta$, $j(p_0)$ forces that $\dot{C} \cap \gamma$ is enumerated as
$j(\dot{z})_{\gamma'}$ for some $\gamma' < \delta$.

Let $p_0 = (a_0, A_0)$, and let $\eta = \max(\dom(a_0))$ if $a_0 \neq \emptyset$,
or let $\eta = -1$ otherwise. Consider the condition $j(p_0) = (j(a_0), j(A_0))$.
Note that, for all $(\alpha, x) \in A_0$, $j$ fixes $\nu_x$, and
since $\otp(x) < \nu$, we have $j(x) = j``x$ and $j(\bar{o}(x)) = \bar{o}(x)$.
For $\alpha \in (j(\eta), j(o(y)))$, we have that, in $M$, $j(A_0)(\alpha)$ is a
measure-one set for a measure on $\mathcal{P}_{j(\nu)}(j(\nu)^{+\alpha})$.
Since $j(\nu) > \nu^{+o(y) + 1}$, we know that the set
$A^*(\alpha) := \{z \in j(A_0)(\alpha) \mid \nu^{+o(y) + 1} <
(z \cap j(\nu))\}$ is still a measure-one set. For $\alpha \in
\dom(j(A_0)) \cap j(\eta)$, set $A^*(\alpha) = j(A_0)(\alpha)$. Then $\hat{p}_0
= (j(a_0), A^*)$ is a direct extension of $j(p_0)$ in $j(\bb{P}_{U_y})$.

If there is a limit ordinal in $\dom(j(a_0))$, then let $\alpha_0^*$ be the
largest such limit ordinal, and let $\nu_0 = a_0(\alpha_0^*) \cap j(\nu)$.
Note that, letting $x = j^{-1}(a_0(\alpha_0^*))$, we have $\nu_0 =
\nu_x$ and $\nu_x^{+\bar{o}(x)} < \nu < \mu$. If there is no limit ordinal in
$\dom(j(a_0))$, then let $\nu_0 = 0$.
If there is a limit ordinal in the interval $(j(\eta), j(o(y)))$, then let
$\alpha_1^*$ be the least such ordinal, and let $\hat{p}_1$ be an extension
of $\hat{p}_0$ with $\alpha^* \in \dom(a^{\hat{p}_1})$. Notice that, in this
case, letting $\nu_1 = a^{\hat{p}_1}(\alpha_1^*) \cap j(\nu)$, our construction
of $A^*$ implies that $\mu < \nu_1$. If there is no such limit ordinal, then let
$\hat{p}_1 = \hat{p}_0$ and $\nu_1 = j(\nu)$.

We are now in the setting of Lemma \ref{club_subset}
applied to $j(\bb{P}_{\vec{U}_y})$, $\delta$, $\nu_0 < \mu < \nu_1$,
and $\hat{p}_1$. We can therefore
find a direct extension $\hat{p}_2$ of $\hat{p}_1$
in $j(\bb{P}_{\vec{U}})$ and a club $D \subseteq \mu$ in $M$ such that
$\hat{p}_2$ forces $D \subseteq \dot{C}$. Let
$E = \{ \gamma < \nu^{+o(y)+1} \mid j(\gamma) \in
D \}$.  It is straightforward to see that $E$ is ${<}\nu$-club in
$\nu^{+o(y)+1}$.  Let $\gamma^*$ be the
$\nu^{+o(y)}$-th element in an increasing enumeration of $E$.  We can
assume that there is an index $\hat{\gamma} < \nu^{+o(y)+1}$
such that $\hat{p}_2$ forces that
$\dot{C} \cap j(\gamma^*)$ is enumerated as
$j(\dot{z})_{\gamma'}$ for some $\gamma' < j(\hat{\gamma})$.

Recall that $\cf(o(y)) < \nu$.  Now if
$x \subseteq E \cap \gamma^*$ has order type at most $\cf(o(y))$, then
there is a condition $p_x \leq p_0$ in $\bb{P}_{\vec{U}_y}$ which
forces that $x \subseteq \dot{z}_\gamma$ for some
$\gamma < \hat{\gamma}$.  To see this, notice that $j$ of this statement is
witnessed by $\hat{p}_2$. Also note that the number of such $x$ is
$|E \cap \gamma^*|^{\cf(o(y))} = (\nu^{+o(y)})^{\cf(o(y))} =
\nu^{+o(y) + 1}$.

By the $\nu^{+o(y) + 1}$-chain condition of $\P_{\vec{U}_y}$,
we can find a condition which forces
that, for $\nu^{+o(y)+1}$ many $x$, $p_x$ is in the generic filter.
This is impossible, since each $\dot{z}_\gamma$ is forced to have order type
less than $\nu$ and hence, in the extension, where $\nu$ remains a strong
limit cardinal, we have $\vert
\bigcup_{\gamma<\hat{\gamma}} \mathcal{P}(\dot{z}_\gamma) \vert \leq \nu \cdot
\nu^{+o(y)} = \nu^{+o(y)}$.
\end{proof}

Next, we show that weak square holds at points taken from $X_\beta$ where
$\cf(\beta) \geq \kappa$. We first need the definition of a partial square
sequence.

\begin{definition} Let $\lambda< \delta$ be regular cardinals, and let $S \subseteq
\delta \cap \cof(\lambda)$.  We say that $S$ \emph{carries a partial square
sequence} if there is a sequence $\langle C_\gamma \mid \gamma \in S \rangle$
such that:
\begin{enumerate}
\item for all $\gamma \in S$, $C_\gamma$ is club in $\gamma$ and
$\otp(C_\gamma)=\lambda$;
\item for all $\gamma < \gamma^*$ from $S$, if $\beta$ is a limit point of
$C_\gamma$ and $C_{\gamma^*}$, then $C_\gamma \cap \beta = C_{\gamma^*} \cap
\beta$.
\end{enumerate}
\end{definition}

Next, we need a theorem of D\u{z}amonja and Shelah \cite{DS}.

\begin{theorem} \label{ds_theorem} Suppose that $\lambda$ is a regular cardinal and
	$\mu > \lambda$ is
singular. If $\cf([\mu]^{\leq \lambda}, \subseteq) = \mu$, then $\mu^+ \cap
\cof(\lambda)$ is the union of $\mu$-many sets, each of which carries a
partial square sequence.  \end{theorem}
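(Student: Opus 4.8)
The plan is to first reduce to a single cofinality. Since $\cof(\le\lambda) = \bigcup\{\cof(\rho) \mid \rho \le \lambda \text{ regular}\}$ and there are at most $\lambda < \mu$ such $\rho$, it suffices to show that for each fixed regular $\rho \le \lambda$ the set $\mu^+ \cap \cof(\rho)$ is a union of $\mu$ sets, each carrying a partial square sequence whose clubs have order type $\rho$; the points of $\mu^+$ below $\mu$ can be treated separately by the same method, so I will tacitly work with $\gamma \in [\mu, \mu^+)$. Fix once and for all a $\subseteq$-cofinal family $\la a_i \mid i < \mu \ra$ in $([\mu]^{\le \lambda}, \subseteq)$; by closing off I may assume each $a_i$ is infinite and that the family is closed under intersections with ordinals $\le \mu$ and under finite unions. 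For $\gamma \in [\mu, \mu^+)$ also fix a surjection $h_\gamma : \mu \to \gamma$.

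The engine of the argument is a \emph{coherent} choice of the $h_\gamma$'s. I would build $\la h_\gamma \mid \mu \le \gamma < \mu^+ \ra$ by recursion on $\gamma$ so that the induced family $\{h_\gamma``a_i \mid i < \mu\}$ is stable under initial segments in a strong way: there should be a partition of $\mu^+ \cap \cof(\rho)$ into $\mu$ pieces such that, on each piece, the set $h_\gamma``a_{i(\gamma)} \cap \beta$ (for the distinguished index $i(\gamma)$ introduced below) depends, for every limit point $\beta$ of $h_\gamma``a_{i(\gamma)}$, only on $\beta$ and on the piece. Successor steps of the recursion are routine; at a limit $\gamma$ of cofinality $\tau$ one fixes a cofinal $\la \gamma_\zeta \mid \zeta < \tau \ra$ and amalgamates the $h_{\gamma_\zeta}$'s, and the hypothesis $\cf([\mu]^{\le\lambda}, \subseteq) = \mu$ is used precisely here: it is exactly what guarantees that the cofinal family is rich enough for the amalgamation to be carried out while respecting simultaneously the coherence demands coming from all $\beta < \gamma$. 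I expect this to be the main obstacle of the whole proof — it is the heart of the Dzamonja--Shelah argument — and everything after it is bookkeeping.

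Granting such a system, for $\gamma \in \mu^+ \cap \cof(\rho)$ let $i(\gamma) < \mu$ be least such that $h_\gamma``a_{i(\gamma)}$ is cofinal in $\gamma$; this exists because any club in $\gamma$ of order type $\rho$ pulls back under $h_\gamma$ to a member of $[\mu]^{\le\lambda}$, which lies in some $a_i$. Put $C^0_\gamma = \mathrm{cl}_\gamma(h_\gamma``a_{i(\gamma)})$, a club in $\gamma$ with $\otp(C^0_\gamma) < \lambda^+ \le \mu$. Now decompose $\mu^+ \cap \cof(\rho)$ by the pair $(i(\gamma), \otp(C^0_\gamma))$, refined by the $\mu$-sized partition from the coherent system; this still gives $\mu$ pieces. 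On a single piece $S$, all $C^0_\gamma$ have a fixed order type $\tau$ with $\cf(\tau) = \rho$, and if $\gamma < \gamma^*$ are in $S$ and $\beta$ is a common limit point of $C^0_\gamma$ and $C^0_{\gamma^*}$, then $C^0_\gamma \cap \beta = \mathrm{cl}_\beta(h_\gamma``a_{i(\gamma)} \cap \beta)$ and $C^0_{\gamma^*} \cap \beta = \mathrm{cl}_\beta(h_{\gamma^*}``a_{i(\gamma^*)} \cap \beta)$ are equal, by the stability property. Thus $\la C^0_\gamma \mid \gamma \in S \ra$ already satisfies the coherence clause of a partial square sequence, except that $\tau$ need not equal $\rho$.

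To finish I would pass to clubs of order type exactly $\rho$ by a \emph{local} thinning. On the piece $S$, fix an increasing continuous cofinal map $r : \rho \to \tau$, let $e_\gamma : \tau \to C^0_\gamma$ be the increasing (hence continuous) enumeration, and set $C_\gamma = (e_\gamma \circ r)``\rho$; this is a club in $\gamma$ of order type $\rho$. If $\beta$ is a limit point of $C_\gamma$, then $\beta = (e_\gamma \circ r)(\xi)$ for some limit $\xi < \rho$, $\beta$ is also a limit point of $C^0_\gamma$, and $C_\gamma \cap \beta$ equals the image, under the increasing enumeration of $C^0_\gamma \cap \beta$, of the set $\mathrm{ran}(r) \cap \otp(C^0_\gamma \cap \beta)$ — an object depending only on $C^0_\gamma \cap \beta$ and on $r$. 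Since on $S$ the set $C^0_\gamma \cap \beta$ depends only on $\beta$, so does $C_\gamma \cap \beta$, and therefore $\la C_\gamma \mid \gamma \in S\ra$ is a partial square sequence on $S$ with clubs of order type $\rho$, as required. The only genuinely delicate points are the recursive construction of the coherent system from $\cf([\mu]^{\le\lambda}, \subseteq) = \mu$, and the check that this local thinning is compatible with passing to initial segments; the remaining verifications of the two clauses of the definition are straightforward.
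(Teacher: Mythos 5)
First, a point of comparison: the paper does not prove this statement at all --- it is quoted as a known theorem of Dzamonja and Shelah \cite{DS}, so there is no internal proof to measure your attempt against. Judged on its own terms, your proposal is not a proof: it isolates the entire mathematical content of the theorem into the ``coherent choice of the $h_\gamma$'s'' and then asserts that this can be arranged, explicitly deferring it as ``the heart of the Dzamonja--Shelah argument.'' Everything you actually verify (the reduction to a fixed cofinality $\rho$, the choice of $i(\gamma)$, the closure $C^0_\gamma$, the refinement of the partition by $(i(\gamma),\otp(C^0_\gamma))$, and the order-type uniformization via a fixed $r:\rho\to\tau$) is indeed routine bookkeeping; the one step you do not carry out is the theorem.

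Concretely, the gap is this. Your stability property demands a partition of $\mu^+\cap\cof(\rho)$ into $\mu$ pieces such that, on each piece, $h_\gamma``a_{i(\gamma)}\cap\beta$ depends only on $\beta$ and the piece. Coherence is a symmetric condition between pairs $\gamma<\gamma^*$ both lying in a piece, and the pieces themselves are only determined once all the $h_\gamma$ are built; moreover $i(\gamma)$ is defined from $h_\gamma$, which you are constructing subject to a constraint phrased in terms of $i(\gamma)$. A recursion on $\gamma$ that ``amalgamates the $h_{\gamma_\zeta}$'' at limits cannot, as described, guarantee agreement with surjections $h_{\gamma^*}$ for $\gamma^*>\gamma$ not yet constructed, and the remark that $\cf([\mu]^{\le\lambda},\subseteq)=\mu$ ``is exactly what guarantees that the amalgamation can be carried out'' is an assertion, not an argument --- the hypothesis gives you a $\mu$-sized cofinal family, but turning that into simultaneous coherence across all future $\gamma^*$ is precisely the difficulty. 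The actual Dzamonja--Shelah argument does not build coherent surjections by recursion; rather, it makes each club canonically reconstructible from $\beta$ together with an invariant drawn from a set of size $\mu$ (built from the cofinal family), so that coherence on each fiber of the invariant is automatic. Until that mechanism, or some substitute for it, is supplied, the proposal establishes nothing beyond the (correct) observation that the theorem reduces to such a coherent system.
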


\begin{lemma} \label{wk_square_lemma}
  Suppose that $\beta \in \theta \cap \cof({\geq}\kappa)$ and
  $p \in \bb{P}$ is a condition such that $a^p(\beta) = y$ for some $y$.  Then
	$p \Vdash \square_{\kappa_y}^*$.
\end{lemma}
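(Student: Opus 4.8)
The plan is to exploit the factorization $\P_{\vec U}/p \cong \P_{\vec U_y}/p_0 \times \P_{\vec U, \beta+1}/p_1$ given by the Factorization Lemma, noting that $\P_{\vec U,\beta+1}/p_1$ adds no new bounded subsets of $\theta(\kappa_y)$ by the Prikry property. Hence it suffices to establish $\square^*_{\kappa_y}$ in the generic extension by $\P_{\vec U_y}/p_0$, whose only effect on cardinals up to $\theta(\kappa_y)$ is that it adds a Radin club to $\nu := \kappa_y$ and makes $(\nu^+)^{V[G]} = (\nu^{+o(y)+1})^V =: \mu^+$. Write $\delta = o(y)$, so that $\cf(\delta) \geq \nu$ by hypothesis on $\cf(\beta)$, and $\mu = \nu^{+\delta}$. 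The first key step is to compute, in $V$, $\cf([\mu]^{\leq \nu}, \subseteq)$: since $\nu$ is inaccessible and $\delta < \theta(\nu)$ with $\delta < \mu$, we have $|\mathcal{P}_\nu(\nu^{+\eta})| = \nu^{+\eta}$ for each $\eta < \delta$ (using our preparation giving appropriate cardinal arithmetic below $\nu$), so $\cf([\mu]^{\leq \nu}, \subseteq) = \mu$. Then Theorem \ref{ds_theorem} of Dzamonja--Shelah applies in $V$: $\mu^+ \cap \cof^V(\leq \nu)$ is a union of $\mu$-many sets $\langle S_i \mid i < \mu\rangle$, each carrying a partial square sequence $\langle C^i_\gamma \mid \gamma \in S_i\rangle$ in $V$.

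The second key step is to transfer these partial square sequences through the forcing. By Lemma \ref{cofomega} and the remark following it, every regular cardinal $\lambda$ of $V$ with $\nu \leq \lambda \leq \mu$ has $\cf^{V[G]}(\lambda) = \omega$; and since $\P_{\vec U_y}/p_0$ collapses $\mu$ to $\nu$ while preserving $\mu^+$, every point of $\mu^+$ that has cofinality $< \nu$ in $V[G]$ and is not of cofinality $\omega$ must already have had $V$-cofinality some regular $\lambda^* < \nu$ (points whose $V$-cofinality was $\geq \nu$ and $\leq \mu$ become cofinality $\omega$). More carefully: a limit $\gamma < \mu^+$ has $\cf^{V[G]}(\gamma) < \nu$ iff either $\cf^V(\gamma) < \nu$, in which case $\cf^{V[G]}(\gamma) = \cf^V(\gamma)$ and $\gamma \in S_i$ for some $i$, or $\nu \leq \cf^V(\gamma) \leq \mu$, in which case $\cf^{V[G]}(\gamma) = \omega$. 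For the former points we simply carry along the $V$-partial-square sequences $\langle C^i_\gamma\rangle$, which remain club of order type $< \nu$ with the coherence property, and we may close each $C^i_\gamma$ under limits to arrange order type exactly some $\lambda < \nu$ and full coherence at limit points. For the latter (new-cofinality-$\omega$) points, we need to manufacture in $V[G]$ a coherent assignment $\gamma \mapsto C_\gamma$ with $C_\gamma$ cofinal in $\gamma$ of order type $\omega$ — here we use the Radin club $C_{G_y}$ and the $\omega$-sequences produced in the proof of Lemma \ref{cofomega}: intersecting $\gamma$ with a fixed cofinal $\omega$-sequence derived from the generic club gives a canonical, coherent choice. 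Finally, assemble the full weak square sequence $\langle D_\gamma \mid \gamma < \mu^+\rangle$ by putting, for each limit $\gamma < \mu^+$, $D_\gamma = \{C : C$ arises from one of the $\mu = |\nu|^{V[G]}$ partial-square threads through $\gamma\} \cup \{$the canonical $\omega$-sequence if $\cf^{V[G]}(\gamma) = \omega\}$, and checking that $|D_\gamma| \leq \nu$ in $V[G]$ (since $\mu$ is collapsed to $\nu$) and that coherence holds: if $\beta$ is a limit point of some $C \in D_\gamma$ then $C \cap \beta \in D_\beta$, which follows from the coherence of each individual thread together with the fact that the index $i$ and the cofinality type are determined by $C \cap \beta$.

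The main obstacle I expect is the bookkeeping at successor-of-singular points and at points whose cofinality changes to $\omega$: the Dzamonja--Shelah theorem only directly handles points of cofinality $\leq \nu$ \emph{computed in $V$}, so one must be careful that after collapsing $\mu$ to $\nu$ the set $\mu^+ \cap \cof^{V[G]}(<\nu)$ is exactly covered, with no cofinality-$\omega$ points slipping through uncovered, and that the $\mu$-many partial square sequences can be amalgamated into genuinely $\leq\nu$-many local clubs at each point while preserving the global coherence condition (2) of $\square^*$. The delicate verification is that the canonical $\omega$-sequences chosen for the new-cofinality-$\omega$ points cohere with each other and with the surviving $V$-threads — this is where we must lean on the fact (from the proof of Lemma \ref{cofomega}) that the generic Radin club on $\nu$ induces, in a canonical way depending only on initial segments, a cofinal $\omega$-sequence in $\nu^{+\eta}$ for each relevant $\eta$.
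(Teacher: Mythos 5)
Your overall strategy is the paper's: extract partial square sequences in $V$ from the Dzamonja--Shelah theorem, keep them at points of small $V$-cofinality, give every point that acquires countable cofinality an $\omega$-sequence (via Lemma \ref{cofomega}), and use the collapse of $\mu=\kappa_y^{+o(y)}$ to $\kappa_y$ together with a coherence/pigeonhole count to bound the number of clubs attached to each point. However, your first key step fails as stated. You apply Theorem \ref{ds_theorem} with $\lambda=\nu=\kappa_y$ and justify the hypothesis $\cf([\mu]^{\leq\nu},\subseteq)=\mu$ from $|\mathcal{P}_\nu(\nu^{+\eta})|=\nu^{+\eta}$; that only controls covering by sets of size $<\nu$, not of size $\nu$. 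Worse, the hypothesis can simply be false in the situation at hand: the lemma allows $\cf^V(\beta)=\kappa$, in which case $\cf(o(y))$ can be exactly $\kappa_y$, so $\cf(\mu)=\nu$ and then $\cf([\mu]^{\leq\nu},\subseteq)\geq\mathrm{pp}(\mu)\geq\mu^+>\mu$. (Note also that the preparation forces $2^{\kappa_y}>\mu$, so no naive exponent computation can help; the correct source of the covering fact is the partial supercompactness of $\kappa_y$ in $V$, via Solovay's theorem, not ``cardinal arithmetic below $\nu$.'')

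The repair is exactly the paper's route: apply Dzamonja--Shelah once for each regular $\lambda<\kappa_y$, where $\cf([\mu]^{\leq\lambda},\subseteq)=\mu$ does follow from the degrees of supercompactness of $\kappa_y$ below $\mu$, obtaining $\kappa_y\cdot\kappa_y^{+o(y)}=\kappa_y^{+o(y)}$ partial square sequences that cover every $\gamma<\mu^+$ with $\cf^V(\gamma)<\kappa_y$. Every remaining limit ordinal has $V$-cofinality in $[\kappa_y,\mu]$ and hence, by Lemma \ref{cofomega} and the remark following it, countable cofinality in $V[G]$; assign it an arbitrary cofinal $\omega$-sequence. Your concern about making these $\omega$-sequences canonical and mutually coherent is unnecessary: a club of order type $\omega$ has no limit points, so clause (2) of $\square^*$ is vacuous at such sets, and an arbitrary choice suffices. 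Similarly, your claim that $\cf^{V[G]}(\gamma)=\cf^V(\gamma)$ whenever $\cf^V(\gamma)<\nu$ is false in general (cardinals appearing on the new Radin club below $\nu$ are singularized), but it is also not needed: split $\mu^+$ according to $V$-cofinality, as above; the only facts required are that the order types stay $\leq\nu$ (automatic) and that, after the collapse, the family attached to each point has size at most $|\kappa_y^{+o(y)}|^{V[G]}=\kappa_y$, which follows from the pigeonhole-plus-coherence argument you already sketch.
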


\begin{proof} For each regular $\lambda < \kappa_y$, we have that
$(\kappa_y^{+o(y)})^{\leq \lambda} =
\kappa_y^{+o(y)}$ using the supercompactness of $\kappa_y$, and
hence $\cf([\kappa_y^{+o(y)}]^{\leq\lambda}, \subseteq ) =
\kappa_y^{+o(y)}$. Therefore, by Theorem \ref{ds_theorem}, we can
write $\kappa_y^{+o(y)+1} \cap \cof(\lambda)$ as the
union of $\kappa_y^{+o(y)}$-many sets which have partial squares.  We
call these partial square sequences $\vec{C}^{\lambda,i}$ for
$i<\kappa_y^{+o(y)}$.

Now let $G$ be $\bb{P}$-generic over $V$ with $p \in G$. By Lemma \ref{cofomega},
in $V[G]$, we have that each $V$-regular cardinal in the
interval $[\kappa_y,\kappa_y^{+o(y)}]$ changes its cofinality to
$\omega$ and $\kappa_y^{+o(y) + 1}$ becomes the successor of $\kappa_y$.
So, in $V[G]$, we can write $\kappa_y^{+o(y)+1}$ as
the disjoint union of $(\kappa_y^{+o(y)+1} \cap \cof({<}\kappa_y))^V$,
which we call $T_0$, and a set $T_1$ of ordinals of countable cofinality.

We define a weak square sequence as follows.  For $\gamma \in T_0$, we let
$\mathcal{C}_\gamma = \{ C_{\gamma'}^{\lambda,i} \cap \gamma \mid \lambda <
\kappa_y$, $i < \kappa_y^{+o(y)}$ and $\gamma$ is a limit point of
$C_{\gamma'}^{\lambda,i} \}$.  For $\gamma \in T_1$, we let $\mathcal{C}_\gamma =
\{C\}$ where $C$ is some cofinal $\omega$-sequence in $\gamma$.

The coherence is obvious, so we just have to check that each
$\mathcal{C}_\gamma$ is not too large.  Suppose that there is $\gamma$ such that
$\vert \mathcal{C}_\gamma \vert \geq \kappa_y^{+o(y)+1}$.  Then, by
the pigeonhole principle, we can find two elements $C$ and $C'$ on which the
indices $\lambda$ and $i$ are the same. But then we have that $C=C'$ by the
coherence of the partial square sequence with indices $\lambda$ and $i$,
which is a contradiction. \end{proof}

We are now ready to complete the proof of Theorem \ref{thm1}. Let $G$
be $\bb{P}$-generic over $V$. Our final model will be a further forcing extension
of $V[G]$. In $V[G]$, let $S$ be the set of $\nu \in \lim(C_G)$ such that
$\square^*_\nu$ holds. By Lemmas \ref{cofomega}, \ref{app_lemma}, and
\ref{wk_square_lemma}, and the fact that $\square^*_\nu$ implies
$\nu^+ \in I[\nu^+]$, we know that $S
\subseteq \kappa \cap \cof(\omega)$ and is precisely the set of
$\nu < \kappa$ such that, for some $p \in G$ and some limit ordinal
$\alpha \in a^p$ with $\cf^V(\alpha) \geq \kappa$, we have $\kappa_{a^p(\alpha)}
= \nu$. By genericity, $S$ is stationary in $\kappa$. However, we claim that $S$ can be
made non-stationary in a cofinality-preserving forcing extension of $V[G]$.

\begin{lemma} \label{non_reflecting_lemma}
  In $V[G]$, suppose that $\delta \in \lim(C_G) \cap \cof({>}\omega)$. Then
	$S \cap \delta$ is non-stationary in $\delta$.
\end{lemma}

\begin{proof}
  Fix $p = (a,A) \in G$ such that, for some $\beta \in (\theta \cap \cof({<}\kappa))^V$,
  $a(\beta) = y$, where $\kappa_y = \delta$. Work in $V$, letting $\dot{S}$ be a
  canonical $\P_{\vec{U}}$-name for $S$. We will find $q \leq p$ such that
  $q \Vdash ``\dot{S} \cap \delta$ is non-stationary$."$

  Let $\mu = \cf(\beta)$. Since $\mu < \kappa$ and $y \in X_\beta$, we have that
  $\mu < \kappa_y$ and $Z^\beta_y$ is ${<}\kappa_y$-closed and unbounded in $\beta$.
  Find $D \subseteq Z^\beta_y$ such that:
  \begin{itemize}
    \item $D$ is club in $\beta$;
    \item $\otp(D) = \mu$;
    \item $\min(D) > \max(\dom(a) \cap \beta)$.
  \end{itemize}

  For each $\alpha \in Z^\beta_y \setminus \min(D)$, let $Y_\alpha = \{x \in
    \power_\delta(y \cap \kappa^{+\alpha}) \mid D \cap \alpha \subseteq
  Z^\alpha_x\}$, and note that $Y_\alpha \in u^\beta_\alpha(y)$. Define
  $q = (a, B) \leq p$ by letting $B(\alpha) = A(\alpha) \cap Y_\alpha$ for
  $\alpha \in Z^\beta_y \setminus \min(D)$ and $B(\alpha) = A(\alpha)$ for all
  other values of $\alpha$. Now $q \Vdash ``D \subseteq \dot{C}_G."$ Let $\dot{E}$
  be a $\P_{\vec{U}}/q$-name for $\{\kappa_x \mid$ for some $r \in G$ and $\alpha
  \in D$, $r(\alpha) = x\}$. $q \Vdash ``\dot{E}$ is club in $\delta"$ and, since
  $\lim(D) \subseteq \cof({<}\kappa)$, Lemma \ref{app_lemma} implies that
  $q \Vdash ``\lim(\dot{E}) \cap \dot{S} = \emptyset."$.
\end{proof}

Recall that a stationary subset $T$ of a regular, uncountable cardinal $\lambda$
is \emph{fat} if for every club $D \subseteq \lambda$ and every ordinal
$\eta < \lambda$, $D \cap T$ contains a closed subset of order type $\eta$.
We claim that $\kappa \setminus S$ is a fat stationary subset of $\kappa$ in
$V[G]$. To see this, fix a club $D \subseteq \kappa$ and an infinite
ordinal $\eta < \kappa$. Since $\kappa$ is strongly inaccessible, we can find
$\delta \in \lim(C_G \cap D)$ with $\cf(\delta) > \eta$. By
Lemma \ref{non_reflecting_lemma}, $S \cap \delta$ is non-stationary in $\delta$,
so we can find a club $E \subseteq \delta$ that is disjoint from $S$.
But then $D \cap E$ is a closed subset of $D \cap (\kappa \setminus S)$.
Thus, $\kappa \setminus S$ is a fat stationary subset of $\kappa$.

Let $\bb{Q}$ be the forcing notion whose conditions are closed, bounded subsets of
$\kappa$ disjoint from $S$, ordered by end-extension. $\bb{Q}$ adds a club
in $\kappa$ disjoint from $S$ and, by a result of Abraham and Shelah
\cite[Theorem 1]{Abraham-Shelah} and the fact that $\kappa \setminus S$
is fat, $\bb{Q}$ is $\kappa$-distributive. Thus, if $H$ is
$\bb{Q}$-generic over $V[G]$, $D$ is the generic club added by $Q$, and
$E = D \cap C_G$, then $E$ witnesses that $V[G*H]$ satisfies the conclusion of
Theorem \ref{thm1}. Moreover, if we let $N = (V[G*H])_\kappa = V_\kappa^{V[G*H]}$,
then $N$ is a model of GB (G\"{o}del-Bernays) with a class club $E$ through its
cardinals such that, for every $\nu \in E$, $\nu$ is a singular cardinal, SCH fails
at $\nu$, and $\square^*_\nu$ fails.

\section{Conclusion} \label{conclusion}

In a forthcoming paper of the third author \cite{ungernew}, a model is
constructed in which $\aleph_{\omega^2}$ is strong limit and weak square fails
for all cardinals in the interval $[\aleph_1,\aleph_{\omega^2+2}]$.  In
particular, it is shown that one can put collapses between the Prikry points of
the Gitik-Sharon \cite{gitiksharon} construction which will make $\kappa$ into
$\aleph_{\omega^2}$ and enforce the failure of weak square below
$\aleph_{\omega^2}$.

It is reasonable to believe that this construction could be combined with the
forcing from Theorem \ref{thm1}, but we are left with the unsatisfactory result
that weak square will hold at some successors of singulars in the extension.  To
make this precise, we formulate a question which seems to capture the limit of a
naive combination of the two techniques.

\begin{question} Suppose that $\kappa$ is a singular cardinal of cofinality
$\omega$ such that $\square^*_\lambda$ fails for all $\lambda \in
[\aleph_1,\kappa)$ and $\vert \{ \lambda <\kappa \mid \lambda$ is singular
strong limit $\}\vert =\kappa$.  Is there a $\square_\kappa^*$-sequence?
\end{question}

We also ask two other natural questions.

\begin{question} Is there a version of Theorem \ref{thm1} in which the failure of $\square_\nu^*$ is replaced with the tree property at $\nu^+$?
\end{question}

\begin{question} Let $C_G \subset \kappa$ be a generic Radin club added by the poset $\po$ defined in Section \ref{mainposet}. Does $\square_{\nu,\omega}$ fail at every ordinal $\nu \in C_G$?
\end{question}

\bibliography{diagonal_radin}

\providecommand{\bysame}{\leavevmode\hbox to3em{\hrulefill}\thinspace}
\providecommand{\MR}{\relax\ifhmode\unskip\space\fi MR }
% \MRhref is called by the amsart/book/proc definition of \MR.
\providecommand{\MRhref}[2]{%
  \href{http://www.ams.org/mathscinet-getitem?mr=#1}{#2}
}
\providecommand{\href}[2]{#2}
\begin{thebibliography}{10}

\bibitem{abraham}
Uri Abraham, \emph{Aronszajn trees on $\aleph_2$ and $\aleph_3$}, Annals of
  Pure and Applied Logic \textbf{24} (1983), no.~3, 213 -- 230.

\bibitem{Abraham-Shelah}
Uri Abraham and Saharon Shelah, \emph{Forcing closed unbounded sets}, J.
  Symbolic Logic \textbf{48} (1983), no.~3, 643--657.

\bibitem{cf}
James Cummings and Matthew Foreman, \emph{The tree property}, Advances in
  Mathematics \textbf{133} (1998), no.~1, 1 -- 32.

\bibitem{cfdiag}
James Cummings and Matthew Foreman, \emph{Diagonal {P}rikry extensions}, J.
  Symbolic Logic \textbf{75} (2010), no.~4, 1383--1402.

\bibitem{DS}
Mirna D{\u{z}}amonja and Saharon Shelah, \emph{On squares, outside guessing of
  clubs and {$I_{<f}[\lambda]$}}, Fund. Math. \textbf{148} (1995), no.~2,
  165--198.

\bibitem{erdostarski}
Paul Erd\H{o}s and Alfred Tarski, \emph{On some problems involving inaccessible
  cardinals}, Essays on the Foundations of Mathematics (1961), 50--82.

\bibitem{FW}
Matthew Foreman and W.~Hugh Woodin, \emph{The generalized continuum hypothesis
  can fail everywhere}, Ann. of Math. (2) \textbf{133} (1991), no.~1, 1--35.

\bibitem{gitiksch}
Moti Gitik, \emph{The strength of the failure of the singular cardinal
  hypothesis}, Ann. Pure Appl. Logic \textbf{51} (1991), no.~3, 215--240.

\bibitem{gitikhandbook}
\bysame, \emph{Prikry-type forcings}, Handbook of set theory, Springer, 2010,
  pp.~1351--1447.

\bibitem{gitiksharon}
Moti Gitik and Assaf Sharon, \emph{On {SCH} and the approachability property},
  Proc. Amer. Math. Soc. \textbf{136} (2008), no.~1, 311--320.

\bibitem{jensen}
R.~Bj{\"o}rn Jensen, \emph{The fine structure of the constructible hierarchy},
  Ann. Math. Logic \textbf{4} (1972), 229--308; erratum, ibid. 4 (1972), 443,
  With a section by Jack Silver.

\bibitem{laver}
Richard Laver, \emph{Making the supercompactness of {$\kappa $} indestructible
  under {$\kappa $}-directed closed forcing}, Israel J. Math. \textbf{29}
  (1978), no.~4, 385--388.

\bibitem{magidorshelah}
Menachem Magidor and Saharon Shelah, \emph{The tree property at successors of
  singular cardinals}, Arch. Math. Logic \textbf{35} (1996), no.~5-6, 385--404.

\bibitem{mitchell}
William Mitchell, \emph{Aronszajn trees and the independence of the transfer
  property}, Ann. Math. Logic \textbf{5} (1972/73), 21--46.

\bibitem{monkscott}
J.~Donald Monk and Dana~S. Scott, \emph{Additions to some results of
  {E}rd{\H{o}}s and {T}arski}, Fundamenta Mathematicae \textbf{53} (1964),
  no.~3, 335--343.

\bibitem{neemantpsch}
Itay Neeman, \emph{Aronszajn trees and failure of the singular cardinal
  hypothesis}, J. Math. Log. \textbf{9} (2009), no.~1, 139--157.

\bibitem{neemanupto}
\bysame, \emph{The tree property up to {$\aleph\sb {\omega+1}$}}, J. Symb. Log.
  \textbf{79} (2014), no.~2, 429--459.

\bibitem{sinapovauncountable1}
Dima Sinapova, \emph{A model for a very good scale and a bad scale}, The
  Journal of Symbolic Logic \textbf{73} (2008), no.~04, 1361--1372.

\bibitem{sinapovauncountable2}
\bysame, \emph{The tree property and the failure of {SCH} at uncountable
  cofinality}, Arch. Math. Logic \textbf{51} (2012), no.~5-6, 553--562.

\bibitem{sinapovaomegasquared}
\bysame, \emph{The tree property and the failure of the singular cardinal
  hypothesis at {$\aleph\sb {\omega\sp 2}$}}, J. Symbolic Logic \textbf{77}
  (2012), no.~3, 934--946.

\bibitem{sinapovaunger2}
Dima Sinapova and Spencer Unger, \emph{Scales at {$\aleph_\omega$}}, Israel J.
  Math. \textbf{209} (2015), no.~1, 463--486.

\bibitem{specker}
Ernst Specker, \emph{Sur un probl\`eme de {S}ikorski}, Colloquium Mathematicum
  \textbf{2} (1949), 9--12.

\bibitem{ungerind2}
Spencer Unger, \emph{Fragility and indestructibility {II}}, Ann. Pure Appl.
  Logic \textbf{166} (2015), no.~11, 1110--1122.

\bibitem{ungernew}
\bysame, \emph{Successive failures of approachability},  (2016), preprint.

\bibitem{ungernsl}
\bysame, \emph{The tree property below $\aleph_{\omega\cdot 2}$}, Annals of
  Pure and Applied Logic \textbf{167} (2016), no.~3, 247 -- 261.

\end{thebibliography}
\bibliographystyle{amsplain}

\end{document}